\newcommand{\R}{\mathbb{R}}
\newcommand{\C}{\mathbb{C}}
\newcommand{\Z}{\mathbb{Z}}
\newtheorem{prop}{Proposition}
\newtheorem{theorem}{Theorem}
\newtheorem{lemma}{Lemma}
\newtheorem{corollary}{Corollary}
\title{Nilpotency in instanton homology, and the\\ framed instanton homology of a surface times a circle}
\author{William Chen \& Christopher Scaduto}
\date{}
\begin{document}

\maketitle

\begin{abstract}
In the description of the instanton Floer homology of a surface times a circle due to Mu\~{n}oz, we compute the nilpotency degree of the endomorphism $u^2-64$. We then compute the framed instanton homology of a surface times a circle with non-trivial bundle, which is closely related to the kernel of $u^2-64$. We discuss these results in the context of the moduli space of stable rank two holomorphic bundles with fixed odd determinant over a Riemann surface. 
\end{abstract}

\vspace{.6cm}

\section{Introduction}

For a closed, oriented and connected 3-manifold equipped with an $SO(3)$-bundle that restricts non-trivially to some embedded, oriented surface, Floer \cite{floer-dehn} defined a relatively $\Z/8$-graded complex vector space called {\emph{instanton homology}}. We call such bundles {\emph{non-trivial admissible}}. Floer also earlier defined his instanton homology for integral homology 3-spheres, in which case the bundle is trivial \cite{floer-instanton}. When the bundle is non-trivial, the instanton homology comes with a degree four endomorphism $u$, which is an isomorphism. In fact, the degree zero endomorphism $u^2-64$ is nilpotent, as exhibited in the work of Fr\o yshov \cite{froyshov}. Although the expression $u^2-64$ is in general not well-defined on Floer's instanton homology in the homology 3-sphere case, it does make sense in the framework of Fr\o yshov's {\emph{reduced}} instanton homology groups in loc. cit., and Fr\o yshov proves that $u^2-64$ is nilpotent in this context as well.

The instanton homology of a surface times a circle with bundle whose second Stiefel-Whitney class is Poincar\'{e} dual to the circle plays an important r\^{o}le in the general structure of instanton homology. It is equipped with a ring structure, which was computed by Mu\~{n}oz \cite{munoz-ring}. Using Mu\~{n}oz's ring one can also see the nilpotency of $u^2-64$ for non-trivial admissible bundles. 

Our first result is on the degree of this nilpotency. Let $\Sigma$ be a closed, oriented surface of genus $g\geqslant 1$, and denote Mu\~{n}oz's instanton homology with non-trivial bundle as above by $I(\Sigma\times S^1)_w$.\\

\begin{theorem}\label{prop:main}
The endomorphism $u^2-64$ acting on the instanton homology $I(\Sigma\times S^1)_w$ satisfies
\[
    {\emph{\text{min}}}\Big\{ {n \geqslant 1 }: \;\;(u^2-64)^n =0 \Big\} \;\;=\;\;  2\left\lceil  g/2 \right\rceil -1.
\]
\end{theorem}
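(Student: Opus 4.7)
The plan is to work inside Muñoz's explicit algebraic presentation of $I(\Sigma \times S^1)_w$ as a quotient of the polynomial ring $\mathbb{C}[\alpha, \beta, \gamma]$ (with $\deg\alpha = 2$, $\deg\beta = 4$, $\deg\gamma = 6$) by an ideal generated by three recursively defined polynomials $q_g, q_{g+1}, q_{g+2}$, under which $u$ is identified with the action of $\alpha$ up to a conventional scalar. In this language, Theorem~\ref{prop:main} becomes the purely algebraic assertion that the smallest $n \geqslant 1$ for which $(\alpha^2 - 64)^n$ lies in $(q_g, q_{g+1}, q_{g+2})$ equals $2\lceil g/2 \rceil - 1$, and the goal is to pin down this number by direct manipulation of the relations.

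I would begin with a spectral decomposition of $R_g := \mathbb{C}[\alpha, \beta, \gamma]/(q_g, q_{g+1}, q_{g+2})$. Specializing $q_k$ to $\beta = \gamma = 0$ produces a polynomial in $\alpha$ whose roots are $\pm 8$, which is the mechanism responsible for nilpotency of $u^2 - 64$ in the first place; consequently $R_g$ splits as the direct sum of the generalized $\pm 8$-eigenspaces of $\alpha$. The involution $(\alpha, \beta, \gamma) \mapsto (-\alpha, \beta, -\gamma)$ is a ring automorphism swapping these two summands, so the nilpotency of $\alpha^2 - 64$ on $R_g$ equals the nilpotency of $\tilde\alpha := \alpha - 8$ on the $+8$-summand alone. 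To control the latter I would substitute $\alpha = 8 + \tilde\alpha$ in the three defining relations, filter by powers of $\gamma$, and analyse the resulting system: the relation $q_g$ should produce the semisimple quotient in which $\tilde\alpha$ acts as zero, while $q_{g+1}$ and $q_{g+2}$ should produce Jordan blocks whose lengths can be read off from the recursion.

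The predicted answer $2\lceil g/2 \rceil - 1$ equals $g$ for odd $g$ and $g-1$ for even $g$; this parity dependence should emerge from the fact that among three consecutive indices $g, g+1, g+2$ exactly two have the same parity, producing one extra cancellation when $g$ is even. The main technical obstacle I anticipate is the sharp lower bound: one needs an explicit element of $R_g$ on which $(\alpha^2 - 64)^{2\lceil g/2 \rceil - 2}$ acts nontrivially, and identifying such a ``top'' class at the bottom of the $\gamma$-filtration is where I expect most of the work to lie. Induction on $g$, passing from $g$ to $g+2$ and exploiting the uniformity of Muñoz's recursion, looks like the most promising framework: each inductive step should contribute exactly two additional units of nilpotency, matching the closed-form answer, while the base cases $g \in \{1, 2\}$ (where the claim reduces to $u^2 = 64$) can be checked by hand from Muñoz's original ring computation.
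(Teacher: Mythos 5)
Your plan has a concrete error at its foundation: in Mu\~{n}oz's presentation the $u$-map is multiplication by $\beta$, not $\alpha$. The paper states this explicitly (``The $u$-map on $I(\Sigma\times S^1)'_w$ will be denoted $\beta$''), and it is $\beta$, not $\alpha$, whose only eigenvalues on the quotient ring are $\pm 8$ --- that is precisely why $u^2-64=\beta^2-64$ is nilpotent. The eigenvalues of $\alpha$ are $0,\pm 4,\pm 8\sqrt{-1},\ldots,\pm 4(g-1)\sqrt{-1}^{\,g}$ (see the paper's Lemma~\ref{lemma:munoz}), so $\alpha^2-64$ is \emph{not} nilpotent at all, and the ``spectral decomposition of $R_g$ into generalized $\pm8$-eigenspaces of $\alpha$'' that your argument hinges on does not exist. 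Moreover, once you correct $\alpha\mapsto\beta$, the involution $(\alpha,\beta,\gamma)\mapsto(-\alpha,\beta,-\gamma)$ cannot help: it fixes $\beta$ pointwise, hence preserves each $\beta$-eigenspace rather than swapping them --- and indeed those two eigenspaces have unequal dimensions (in the paper's notation $\deg J_g^+\neq\deg J_g^-$ for generic $g$), so no such swap can exist. Your plan to halve the problem via symmetry is therefore not salvageable.

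Beyond this, the hardest part of the theorem is the sharp lower bound, and you correctly anticipate this but leave it unresolved. The paper has to do real work here: a nested double induction (its Lemmas~\ref{lemma:above} and~\ref{lemma:below}) showing that $(\beta-8)^{g-1}(\beta+8)^{g-1}$ is a \emph{positive} rational multiple of $\gamma^{g-1}$ modulo $J_g$ when $g$ is odd, combined with the fact that the monomials $\alpha^i\beta^j\gamma^k$ with $i+j+k<g$ form a basis of $\C[\alpha,\beta,\gamma]/J_g$ (so $\gamma^{g-1}\neq 0$). The positivity is needed so that the various contributions in the induction do not cancel; ``identifying a top class in the $\gamma$-filtration'' is a reasonable slogan but does not substitute for tracking those signs. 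I'd suggest rereading Mu\~{n}oz's presentation carefully to fix the role of $\beta$ versus $\alpha$, and then looking for an inductive upper bound of the form $(\beta-8)^a(\beta+8)^b\zeta_{\,\cdot}\in J_g$ with auxiliary statements carried along, before attempting the lower bound.
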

\vspace{.55cm}

\noindent This in turn gives an upper bound for the nilpotency degree of $u^2-64$ on the instanton homology of an arbitrary 3-manifold with non-trivial admissible bundle. Following the notation of \cite{km-sutures}, suppose $Y$ is a closed, oriented and connected 3-manifold with a Hermitian line bundle $w\longrightarrow Y$ whose first chern class has odd pairing with some closed, oriented surface $\Sigma\subset Y$. The class $w_2(w)$ determines the $SO(3)$-bundle used to define the instanton homology $I(Y)_w$. In the sequel we call the pair $(Y,w)$ {\emph{non-trivial admissible}}. In the cylinder $Y\times \R$, take a $\Sigma\times D^2$ neighborhood of the surface $\Sigma\times \{0\}$. By a stretching argument employed by Fr{\o}yshov, the $u$-map of $I(Y)_w$ factors through $I(S^1\times \Sigma)_w$ and we have, cf. \cite[Thm. 9]{froyshov}:\\

\begin{corollary}
Let $(Y,w)$ be a non-trivial admissible pair. If there is a closed oriented surface $\Sigma\subset Y$ of genus zero with $w|_\Sigma$ nontrivial, then $I(Y)_w=0$. Otherwise, $u^2-64$ acting on $I(Y)_w$ satisfies

\vspace{.10cm}
\begin{equation*}
    {\emph{\text{min}}}\Big\{ {n \geqslant 1 }: \;\;(u^2-64)^n =0 \Big\} \;\; \leqslant \;\; 2\left\lceil  g/2 \right\rceil -1 \label{eq:cor1}
\end{equation*}
\vspace{-.06cm}

\noindent where $g\geqslant 1$ is the minimal genus over all closed oriented surfaces $\Sigma\subset Y$ with $w|_\Sigma$ non-trivial.
\end{corollary}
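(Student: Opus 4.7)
The plan is to leverage Fr{\o}yshov's neck-stretching argument together with Theorem~\ref{prop:main} to transfer the nilpotency identity from $I(\Sigma\times S^1)_w$ to $I(Y)_w$, plus a separate vanishing input in the genus zero case.

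First I would set up the cobordism-theoretic factorization. Fix a closed oriented surface $\Sigma\subset Y$ of minimal genus $g$ on which $w|_\Sigma$ is non-trivial, take a tubular neighborhood $\Sigma\times D^2$, and consider the cylinder $Y\times\R$ with neck stretched along $\Sigma\times S^1\times\{0\}$. The content of \cite[Thm.~9]{froyshov} is that under this stretch the endomorphism $u$ on $I(Y)_w$ factors through $I(\Sigma\times S^1)_w$; schematically, $u_Y = \Psi\circ(u_{\Sigma\times S^1}\otimes 1)\circ\Phi$, where $\Phi$ and $\Psi$ are the relative invariants of the two pieces of $Y$ glued along $\Sigma\times S^1$. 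Consequently, for any polynomial $p$, if $p(u)$ annihilates $I(\Sigma\times S^1)_w$ then $p(u_Y) = \Psi\circ(p(u_{\Sigma\times S^1})\otimes 1)\circ\Phi$ also annihilates $I(Y)_w$.

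Assuming $g\geqslant 1$, Theorem~\ref{prop:main} gives $(u^2-64)^{2\lceil g/2\rceil-1}=0$ on $I(\Sigma\times S^1)_w$, and the factorization transports this identity to $I(Y)_w$, yielding the stated inequality. For the genus zero case, I would observe that $w|_{S^2}$ is a non-trivial $SO(3)$-bundle over $S^2$ and therefore admits no flat connection, so $I(S^2\times S^1)_w=0$. The same factorization then forces $u=0$ on $I(Y)_w$; since $u$ is an isomorphism for a non-trivial admissible pair, $I(Y)_w=0$.

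The main obstacle is the precise form of the factorization in step one: one must verify that the stretched neck genuinely isolates the geometric representative of the $u$-class on the $\Sigma\times S^1$ side, so that iterated powers of $u$ on $I(Y)_w$ correspond to iterated powers on the $I(\Sigma\times S^1)_w$ tensor factor. This is exactly the bookkeeping of point-class representatives carried out in Fr{\o}yshov's analysis, and I would invoke his theorem rather than redoing the neck-stretching argument from scratch. The only other point that requires care is the minimal-genus hypothesis, which enters only to make the conclusion sharp; the factorization itself is valid for any such $\Sigma$, so in particular one obtains the bound for the minimal genus.
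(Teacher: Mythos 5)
Your proposal is correct and follows essentially the same route as the paper: the paper itself merely remarks that by Fr\o yshov's neck-stretching argument the $u$-map on $I(Y)_w$ factors through $I(\Sigma\times S^1)_w$, and deduces the corollary from Theorem~\ref{prop:main} (with the genus zero vanishing coming from the emptiness of the relevant flat moduli space over $S^2$, as you note). Your extra observation — that one must check the factorization persists for iterated powers of $u$ by placing the point-class representatives in the stretched neck — is exactly the content of Fr\o yshov's argument and is the right thing to be careful about.
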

\vspace{.5cm}

\noindent One may go on to deduce results regarding the instanton homology of homology 3-spheres from Theorem \ref{prop:main} and Floer's exact triangle, but, as we remark in the closing of this introduction, the full power of the above nilpotency degree seems most relevant for non-trivial bundles.

Our second result regards the kernel, or more precisely the mapping cone, of the endomorphism $u^2-64$ on Mu\~{n}oz's ring. This is related to the {\emph{framed instanton homology}} of a surface times a circle. For a general (not necessarily non-trivial admissible) pair $(Y,w)$ in which $w_2(w)$ determines an $SO(3)$-bundle over $Y$, the framed instanton homology $I^\#(Y)_w$ is a $\Z/4$-graded vector space. For a non-trivial admissible pair $(Y,w)$ there is a long exact sequence

\vspace{.05cm}

\begin{equation}
\begin{tikzcd}[column sep=large]
   \cdots \arrow{r} &  I^\#(Y)_w \arrow{r} & I(Y)'_w \arrow{r}{u^2-64} & I(Y)'_w \arrow{r} & I^\#(Y)_w \arrow{r} & \cdots \label{eq:twistedgysin}
\end{tikzcd}
\end{equation}
\vspace{.15cm}

\noindent Here we remark that for a non-trivial admissible pair $(Y,w)$ the relatively $\Z/8$-graded vector space $I(Y)_w$ is 4-periodic, and $I(Y)_w'$ denotes its quotient by a degree four involution. The framed instanton homology $I^\#(Y)_w$ is defined to be $I(Y\# T^3)'_v$ in which $v$ restricts over $Y$ to $w$, and is non-trivial over the 3-torus $T^3$. This invariant was introduced by Kronheimer and Mrowka \cite{km-knot}. The exact sequence (\ref{eq:twistedgysin}) is a restatement of \cite[Thm 1.3]{scaduto} and is an application of the connect sum theorem of Fukaya \cite{fukaya}.

Via the exact sequence (\ref{eq:twistedgysin}) and Mu\~{n}oz's calculation of the ring $I(\Sigma\times S^1)_w$ we compute the framed instanton homology of a surface times a circle, with the same bundle as above. We define the integer $s_i(g)$ to be the sum $\sum {2g\choose k}$ over the indices $k$ satisfying $0\leqslant k < g$ and $k\equiv i\text{ (mod 4)}$. We write $b_i$ for the dimension of the $i$ (mod 4) graded summand of $I^\#(\Sigma\times S^1)_w$.\\
\vspace{.25cm}

\begin{theorem}\label{thm:main}
Let $g\geqslant 1$, and define $\varepsilon = \varepsilon(g)\in \{0,1\}$ by letting $\varepsilon =1$ if $g$ is odd and $\varepsilon = 0$ if $g$ is even. Then the betti numbers of the $\Z/4$-graded vector space $I^\#(\Sigma\times S^1)_w$ are as follows:
\vspace{.25cm}
\[
    b_{0+\varepsilon}\; =\; b_{1+\varepsilon} \;=\; \frac{g+1}{2}{2g\choose g} - 2^{g-2}(1+1\cdot 2^{g-1}) - s_{1-\varepsilon}(g),
\]

\vspace{.35cm}

\[
    b_{2+\varepsilon}\;=\;b_{3+\varepsilon}\; =\; \frac{g+1}{2}{2g\choose g} - 2^{g-2}(1+3\cdot 2^{g-1}) + s_{1-\varepsilon}(g).
\]
\end{theorem}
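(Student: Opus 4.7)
The plan is to extract the Betti numbers of $I^\#(\Sigma\times S^1)_w$ from the long exact sequence (\ref{eq:twistedgysin}) combined with Muñoz's ring-theoretic description of $I(\Sigma\times S^1)_w$. Since $u^2-64$ is a degree-zero endomorphism of the finite-dimensional $\Z/4$-graded space $V := I(\Sigma\times S^1)'_w$, exactness of (\ref{eq:twistedgysin}) yields a short exact sequence
\[
0 \longrightarrow \mathrm{coker}(u^2-64) \longrightarrow I^\#(\Sigma\times S^1)_w \longrightarrow \ker(u^2-64) \longrightarrow 0
\]
of $\Z/4$-graded vector spaces, with the two outer terms sitting in degree shifts dictated by the connecting homomorphisms. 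Because the kernel and cokernel of a degree-preserving endomorphism of a finite-dimensional graded vector space have identical graded dimensions, each Betti number $b_i$ decomposes as a sum of two graded dimensions of $\ker(u^2-64)$, shifted appropriately.

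The first step is to record the $\Z/4$-graded Poincaré polynomial of $V$. Muñoz identifies $I(\Sigma\times S^1)_w$ with the cohomology ring $H^*(N_g)$, where $N_g$ is the moduli space of stable rank two holomorphic bundles of odd degree and fixed determinant over $\Sigma$; the graded Betti numbers of $N_g$ are classical, computed via Harder--Narasimhan--Atiyah--Bott or Zagier's formulas, and are precisely the source of the combinatorial quantities $\binom{2g}{g}$, $2^{g-2}(1+c\cdot 2^{g-1})$, and $s_i(g)$ appearing in the theorem. I would next reduce the relative $\Z/8$-grading to a $\Z/4$-grading and pass to the quotient $V = I'_w$ by the degree-four involution, tracking how each contribution is distributed among residues modulo $4$. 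The parity parameter $\varepsilon$ enters naturally here because the odd-degree generators $\psi_1,\ldots,\psi_{2g}$ of Muñoz's ring shift grading by an odd amount, so the residue absorbing the $s_{1-\varepsilon}(g)$ term depends on $g \bmod 2$.

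The decisive step is the graded computation of $\ker(u^2-64)$. Under the identification of $u$ with a scalar multiple of a degree-four generator of Muñoz's ring, the operator $u^2-64$ factors along Muñoz's eigenspace decomposition; Theorem \ref{prop:main} pins down the total nilpotency. Using Muñoz's recursive relations, which express the $u^2 = 64$ eigenblock in terms of cohomological data of a genus $g-1$ surface together with contributions from the $\psi$-generators, I would read off the $\Z/4$-graded dimensions of $\ker(u^2-64)$ in closed form. The pairwise symmetries $b_{0+\varepsilon}=b_{1+\varepsilon}$ and $b_{2+\varepsilon}=b_{3+\varepsilon}$ asserted in the theorem would then follow from combining these graded dimensions with the degree shifts of the two connecting maps in (\ref{eq:twistedgysin}).

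The chief obstacle will be careful bookkeeping of the $\Z/4$-grading through Muñoz's eigenspace and recursive decompositions, particularly maintaining correct alignment of the odd-generator contributions after passing to the $I'$ quotient, where the involution being quotiented out can easily be miscounted against the grading of the $\psi$-classes. Once the graded kernel dimensions are in hand, the Betti numbers emerge as the corresponding sums, and identification with the closed-form expressions $\tfrac{g+1}{2}\binom{2g}{g} - 2^{g-2}(1+c\cdot 2^{g-1}) \pm s_{1-\varepsilon}(g)$ reduces to standard binomial identities modulo $4$ together with the explicit Poincaré polynomial of $N_g$.
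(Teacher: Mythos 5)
Your framework is right at the top level: the twisted Gysin sequence identifies $I^\#(\Sigma\times S^1)_w$ with the mapping cone of $u^2-64$ on $V=I(\Sigma\times S^1)'_w$, so everything reduces to the $\Z/4$-graded dimension of $\ker(u^2-64)$; and Mu\~noz's ring presentation is the right input. But the ``decisive step'' is exactly where you have no method, and this is where essentially all of the work in the paper lies. You cannot extract $\ker(u^2-64)$ from the Poincar\'e polynomial of $N^g$ or from ``eigenblock'' heuristics: the kernel is computed in the \emph{deformed} ring of Theorem~\ref{thm:munoz}, where $\beta$ is not nilpotent, and its graded dimensions are not expressible from genus-$(g-1)$ cohomological data in any direct way you indicate. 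Knowing the nilpotency degree from Theorem~\ref{prop:main} does not determine the graded kernel either.

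The paper's actual computation proceeds through several ideas that are absent from your plan. First, the $\text{Sp}(2g,\Z)$-decomposition $\bigoplus_k \Lambda_0^k H \otimes \C[\alpha,\beta,\gamma]/J_{g-k}$ reduces everything to the invariant part (with the primitive parts contributing the factors $\binom{2g}{k}-\binom{2g}{k-2}$ and the grading shifts $t^{3k}$; these, not the Betti numbers of $N^g$, are the source of the binomial sums). Second, one splits the kernel of $\beta^2-64$ as the direct sum of the kernels of $\beta+8$ and $\beta-8$ and specializes the generators of $J_g$ at $\beta=\mp 8$, obtaining ideals $J_g^\mp\subset\C[\alpha,\gamma]$. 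Third, and hardest, one must actually compute $\deg J_g^\pm$ and its $\Z/4$-graded refinement. The even/odd behavior is non-trivial: that $J_g^-=J_{g-1}^-$ for $g$ odd (Lemma~\ref{lemma:minusodd}) and the analogous statements for $J_g^+$ when $g$ is even (Lemmas~\ref{lemma:pluseven}, \ref{lemma:plusjump}) require a delicate induction in which one inverts certain elements modulo the ideal, using an eigenvalue/invertibility test (Lemmas~\ref{lemma:munoz}, \ref{lemma:invertible}); the $+$ case is complicated further because $\alpha$ is not invertible there. The graded dimensions are then read off from explicit Gr\"obner bases (Propositions~\ref{prop:minusgrobner}, \ref{prop:plusgrobner}). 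Without some substitute for this computation of $J_g^\pm$, the graded kernel dimensions are unknown, and the proof cannot proceed to the binomial closed forms.
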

\vspace{.65cm}

\noindent The $\Z/4$-grading used for the above theorem is subject to an invertible $\Z/4$-linear transformation depending on one's conventions. We first use the $\Z/4$-grading of Mu\~{n}oz's ring, and the convention that all maps in (\ref{eq:twistedgysin}) are degree zero except for $I^\#(Y)_w\longrightarrow I(Y)'_w$ which is of degree $1$ (mod 4). 

\setlength{\tabcolsep}{10pt}

\begin {table}
\begin{center}
\caption {Mod 4 graded betti numbers for $I^\#(\Sigma\times S^1)_w$} \label{fig:table1} 
\begin{tabular}{ r|ccccccccc } 
\toprule
genus $g$  & $1$ & $2$ & $3$ & $4$ &  $5$ & $6$ & $7$ & $8$\\
\midrule
$b_{0+\varepsilon} = b_{1+\varepsilon} $ & 0 &  2  &  29  & 131   & 409 & 1,902 & 10,646 & 45,275\\
$b_{2+\varepsilon} = b_{3+\varepsilon} $  & 1 &  6  &  15  & 88  & 575 & 2,486 & 8,554  & 37,659\\ 
\midrule
Total rank  &  2  &  16 & 88 & 428 & 1,968 & 8,776 & 38,400 & 165,868\\
\bottomrule
\end{tabular}
\end{center}
\vspace{.25cm}
\end{table}

\begin {table}
\begin{center}
\caption {Mod 4 graded betti numbers for $H_\ast(N^g_0\sqcup N_0^g)$} \label{fig:table2} 
\begin{tabular}{ r|ccccccccc } 
\toprule
genus $g$ \;\; & $1$ & $2$ & $3$ & $4$ &  $5$ & $6$ & $7$ & $8$\\
\midrule
$n_{0+\varepsilon} = n_{1+\varepsilon}$  & 0 & 2 & 44  &  188  & 464 & 2,188 & 14,104 & 59,096 \\
$n_{2+\varepsilon} = n_{3+\varepsilon}$  & 2 & 10  & 16  &  92 & 796 & 3,356 & 9,920  & 43,864 \\ 
\midrule
Total rank  & 4  &  24 & 120  & 560 & 2,520 & 11,088 &  48,048  & 205,920 \\
\bottomrule
\end{tabular}
\end{center}
%\caption*{More info goes here...}
\vspace{.25cm}
\end{table}

The euler characteristic of the instanton homology in Theorem \ref{thm:main} is seen to be zero. Indeed, the euler characteristic of the framed instanton homology of any pair $(Y,w)$ with $b_1(Y)>0$ is zero \cite[Cor 1.4]{scaduto}. Also, the addition of the betti numbers yields the total dimension:\\

\begin{equation}
    \dim I^\#(\Sigma\times S^1)_w \;=\; b_0+b_1+b_2+b_3 \;=\; 2(g+1){2g\choose g} -2^{g}(1+2^{g}).\label{eq:totaldim}
\end{equation}
\vspace{.30cm}

\noindent

The framed instanton homology $I^\#(\Sigma\times S^1)_w$ is roughly the Morse homology of a Chern-Simons functional whose critical set may be identifed with two copies of the framed moduli space $N^g_0$ of flat $SO(3)$-connections over $\Sigma$ with non-trivial $w_2$. A theorem of Fukaya \cite{fukaya} says that when certain transversality conditions are met, there is a Bott-Morse type spectral sequence whose $E^1$-page is the singular homology of $N^g_0\sqcup N^g_0$ that converges to $I^\#(\Sigma\times S^1)_w$. We write this schematically as

\vspace{.35cm}
\begin{equation}
    E^1 =H_\ast(N_0^g\sqcup N_0^g)\;\;\; \rightrightarrows\;\;\; I^\#(\Sigma\times S^1)_w.\label{eq:specseq}
\end{equation}
\vspace{.15cm}

\noindent We remark that we do not actually verify all the hypotheses of Fukaya's construction to produce (\ref{eq:specseq}), but our results suggest that there is such a spectral sequence. Indeed, a comparison of the mod 4 graded betti numbers of the $E^\infty$- and $E^1$-pages in Tables \ref{fig:table1} and \ref{fig:table2}, respectively, shows the requisite rank inequalities, and the presence of non-trivial differentials on the $E^1$-page. We have written $n_i$ for the $i^\text{th}$ betti number of $H_\ast(N_0^g\sqcup N_0^g)$ with $\Z/4$-grading induced by mod 4 reduction. In particular, $H_\ast(N_0^g\sqcup N_0^g)$ has total dimension given by $2g{2g \choose g}$, which may be compared with the smaller number (\ref{eq:totaldim}). This apparent non-collapsing of the spectral sequence is in contrast with the unframed case. For further explanation and some background, see Section \ref{sec:background}.

The computation of Theorem \ref{thm:main} is clarified by considering the ``invariant part'' of the instanton homology. There is an action of the diffeomorphism group of the surface $\Sigma$ on the instanton homology $I(\Sigma\times S^1)'_w$ which factors through an action of $\text{Sp}(2g,\Z)$. We write $I_\text{inv}(\Sigma\times S^1)'_w$ for the subspace on which $\text{Sp}(2g,\Z)$ acts trivially. The decomposition of the instanton homology into irreducible $\text{Sp}(2g,\Z)$-representations has the following convenient property: the summands corresponding to non-trivial representations may be understood in terms of instanton groups of lower genus, and thus by induction it suffices to compute the invariant part. This reduction to the $\text{Sp}(2g,\Z)$-invariant part goes back to a much studied recursive presentation for the cohomology ring of stable rank two holomorphic bundles over a Riemann surface with fixed odd determinant, which is mentioned in Section \ref{sec:ring}. In this paper, we define the invariant part of the framed instanton homology, denoted $I_\text{inv}^\#(\Sigma\times S^1)_w$, to be the homology mapping cone of $u^2-64$ acting on $I_\text{inv}(\Sigma\times S^1)'_w$. We find that the dimension of this vector space has the simple expression

\vspace{.25cm}
\begin{equation}
    \dim I_\text{inv}^\#(\Sigma\times S^1)_w \;=\; g(g+1) + 2\cdot\delta_{g+2}^4\label{eq:invdim}
\end{equation}
\vspace{.10cm}

\noindent in which $\delta^4_i$ is equal to $1$ if $i \equiv 0$ (mod 4) and is otherwise zero. Next, we mention that computing the mapping cone of the endomorphism $u^2-64$ can be done by computing the mapping cones of $u+8$ and $u-8$ separately, and then summing. More precisely, for any non-trivial admissible pair $(Y,w)$ we can decompose the framed instanton homology into two pieces:

\vspace{.22cm}
\[
    I^\#(Y)_w \; = \; I^\#(Y)^+_w\oplus I^\#(Y)^-_w
\]
\vspace{.10cm}

\noindent in which $I^\#(Y)^\pm_w$ is isomorphic to the homology of the mapping cone of the endomorphism $u\pm 8$ acting on $I(Y)_w$. Thus we have two separate Gysin-like sequences:

\vspace{.05cm}

\begin{equation*}
\begin{tikzcd}[column sep=large]
   \cdots \arrow{r} &  I^\#(Y)^\pm_w \arrow{r} & I(Y)'_w \arrow{r}{u\pm 8} & I(Y)'_w \arrow{r} & I^\#(Y)^\pm_w \arrow{r} & \cdots 
\end{tikzcd}
\end{equation*}
\vspace{.10cm}

\noindent We compute the dimensions of the ``invariant'' parts of the vector spaces $I^\#(\Sigma\times S^1)^\pm_w$ separately, from which we will deduce (\ref{eq:invdim}). This is all done using Mu\~{n}oz's recursive presentation for the instanton Floer ring $I(\Sigma\times S^1)_w'$.
\\

\vspace{.75cm}

\noindent \textbf{Further discussion.} Relations in the instanton homology of a surface times a circle have been used to prove adjunction inequalities \cite{munoz-higher}, the simple-type conjecture for Donaldson invariants \cite{munoz-ff,froyshov}, and an inequality for Fr\o yshov's $h$-invariant \cite{froyshov-inequality}. In most of these situations, some condition on simple-connectivity or the vanishing of the first homology group of a 4-manifold with boundary implies that the element $\gamma$ in Mu\~{n}oz's ring (see Section \ref{sec:ring}) is effectively zero. Mu\~{n}oz observed that in the instanton homology of a genus $g$ surface times a circle, one always has 

\vspace{.20cm}

\[
    \left(u^2-64\right)^{\lceil g/2 \rceil} \; \in \; \text{im}\left(\gamma\right),
\]
\vspace{.15cm}

\noindent which means that in these situations the nilpotency degree of $u^2-64$ is (effectively) about half that of the general nilpotency degree that we exhibit here. The relation of Theorem \ref{prop:main} may be more relevant in situations with non-trivial first homology.\\

\vspace{.55cm}

\noindent \textbf{Outline.} In Section \ref{sec:background} we review some results about the cohomology of the moduli space of stable rank two holomorphic bundles over a Riemann surface with fixed odd determinant. This background material is meant to motivate the statements of our results and the reader interested only in the proofs can likely skip this section. In Section \ref{sec:ring} we review the description of Mu\~{n}oz's ring. In Section \ref{sec:nilpotent} we prove Theorem \ref{prop:main}. In Section \ref{sec:framed} we prove Theorem \ref{thm:main}. Finally, we have included a short appendix at the end explaining how the exact sequence (\ref{eq:twistedgysin}) arises.\\
\vspace{.65cm}

\noindent \textbf{Acknowledgments.} The authors thank Kim Fr\o yshov for his encouragement and helpful comments. Thanks also to Aliakbar Daemi and Matt Stoffregen for helpful comments. The second author was supported by NSF grant DMS-1503100.\\

\vspace{.35cm}

\newpage

%%%%%%%%%%%%%%%%%%%

\section{The analogy with singular cohomology}\label{sec:background}

The results stated in the introduction can be compared to analogous results in ordinary singular cohomology. In place of Mu\~{n}oz's ring is the long-studied cohomogy of the moduli space of stable rank two holomorphic bundles with fixed odd determinant over a Riemann surface. To prove Theorems \ref{prop:main} and \ref{thm:main} we work almost entirely in Mu\~{n}oz's ring, and thus do not require any mention of this cohomology ring. However, in order to provide a proper narrative for the reader, we outline the relevant part of that story here. This ``analogy'' is just a comparison of the $E^1$- and $E^\infty$-pages of Fukaya's Bott-Morse type spectral sequence to which we alluded in the introduction.

\vspace{.75cm}

\noindent\textbf{{{The moduli space of stable rank two holomorphic bundles}}}\\

\noindent Let $N^g$ be the moduli space of stable rank two holomorphic bundles with fixed odd determinant over a Riemann surface $\Sigma$ of genus $g$. This space is a $6g-6$ dimensional closed and simply-connected symplectic manifold. For an introductory survey on $N^g$ we refer the reader to \cite{thaddeus-survey}. Here we discuss some of the basic properties that put our results in context.

Using a classical theorem of Narasimhan-Seshadri, $N^g$ may be identified with the moduli space of flat connections on a non-trivial $SO(3)$-bundle over $\Sigma$ modulo gauge transformations that lift to $SU(2)$. Via holonomy, $N^g$ is then topologically equivalent to $f_g^{-1}(-I)/SU(2)$, in which\\

\vspace{-.10cm}
\[
    f_g:SU(2)^{2g} \longrightarrow SU(2), \qquad f_g(A_1,B_1,\ldots,A_g,B_g) = \prod_{i=1}^{g} [A_i,B_i]
\]
\vspace{.25cm}

\noindent and the action of $SU(2)$ is by conjugation. Otherwise said, let $\Sigma_0$ be the surface with circle boundary resulting from deleting a small open disk from $\Sigma$. Then the above topological model for $N^g$ is that of conjugacy classes of homomorphisms $\pi_1(\Sigma_0)\longrightarrow SU(2)$ that send the class of an oriented loop traversing the boundary to $-I$. 

There is a universal rank two complex vector bundle over $N^g\times \Sigma$, and the restriction of its endomorphism bundle to $N^g\times \{\text{pt}\}$ has structure group $SO(3)=PU(2)$. We write\\

\begin{equation}
\begin{tikzcd}
    SO(3) \arrow[hookrightarrow]{r} & N^g_0 \arrow{r}{p} & N^g\label{eq:fibration}
\end{tikzcd}
\end{equation}
\vspace{.20cm}

\noindent for the associated principal $SO(3)$-bundle. The {\emph{framed}} moduli space $N_0^g$ is topologically equivalent to the $6g-3$ dimensional manifold $f_g^{-1}(-I)$, and under this equivalence the bundle projection $p$ coincides with the conjugation action projection. 

The betti numbers of the singular cohomology ring $H^\ast(N^g)$ were computed by Newstead \cite{newstead-topological}, Harder-Narasimhan \cite{harder-narasimhan} and Atiyah-Bott \cite{atiyah-bott}. Here and throughout the paper, we use complex coefficients for (co)homology. The result can be expressed via the Poincar\'{e} polynomial:\\

\begin{equation}
    P_t(N^g) = \frac{(1+t^3)^{2g} - t^{2g}(1+t)^{2g}}{(1-t^2)(1-t^4)},\qquad P_t(N^g) := \sum_{i=0}^{6g-6} {\dim H^i(N^g) t^i}.\label{eq:poincare}
\end{equation}
\vspace{.30cm}

\noindent For $g\geqslant 2$, note that the euler characteristic $\chi(N^g) = P_{-1}(N^g) =0$. Later, a simpler proof of (\ref{eq:poincare}) was given by Thaddeus, who used a perfect Bott-Morse function for $N^g$ \cite{thaddeus-survey}. 

The cohomology $H^\ast(N_0^g)$ is related to that of $N^g$ through a Gysin sequence. Following the convention of the literature, we let $\beta$ denote the first Pontryagin class of the $SO(3)$ fibration (\ref{eq:fibration}). In particular, $\beta\in H^4(N^g)$. Then the Gysin sequence is a long exact sequence\\

\begin{equation}
\begin{tikzcd}
   \cdots \arrow{r} &  H^{\ast-1}(N_0^g) \arrow{r} & H^{\ast-4}(N^g) \arrow{r}{\cup\beta} & H^{\ast}(N^g) \arrow{r}{p^\ast} & H^{\ast}(N_0^g) \arrow{r} & \cdots \label{eq:gysin}
\end{tikzcd}
\end{equation}

\vspace{.45cm}

\noindent Thus the betti numbers of $H^\ast(N_0^g)$ may be computed from the kernel of the endomorphism on $H^\ast(N^g)$ given by cup product with $\beta$. With the ring structure of $H^\ast(N^g)$ now very well understood, as we will mention in Section \ref{sec:ring}, this is a rather straightforward algebra problem. The betti numbers however were computed earlier by Newstead \cite{newstead-topological} and with $h^g_i :=\dim H^i(N^g_0)$ they are:\\

\begin{equation}
    h^g_i = \sum_{\substack{k=i-2g+2 \\ k\equiv i \text{(mod 2)}}}^{\lfloor i/3 \rfloor} {2g \choose k} \quad \text{for }i<3g-1,\qquad h_i^g=h^g_{6g-3-i}\quad\text{for }i\geqslant 3g-1.\label{eq:newsteadbetti}
\end{equation}
\vspace{.20cm}

\noindent We note that the integral cohomology of $N^g$ is always torsion-free, while the integral cohomology of $N^g_0$ generally contains 2-torsion. For example, $N^1$ is a point, and $N^1_0$ is a copy of $SO(3)$. We remark that the betti number $n_i$ of Table \ref{fig:table2} in the introduction is equal to twice the sum of the $h_j$ over all $j$ congruent to $i$ (mod 4). From the formulae (\ref{eq:newsteadbetti}) one can compute that the total rank of $H^\ast(N^g_0)$ is equal to $g{2g \choose g}$ (see also Section \ref{sec:comparison}).

Related to the kernel of $\beta$ is the fact that multiplication by $\beta$ is nilpotent. In \cite{newstead-char} Newstead conjectured that $\beta^g=0$ in the ring $H^\ast(N^g)$, and observed that this relation is equivalent to the vanishing of all Pontryagin numbers of $N^g$. The slightly stronger statement\\

\vspace{-.10cm}
\begin{equation}\label{eq:betanilp}
    \text{min}\Big\{n\geqslant 1: \;\; \beta^n = 0  \in H^\ast(N^g) \Big\} \;=\; g
\end{equation}
\vspace{.05cm}

\noindent was implicit in the work of Thaddeus, who computed the intersection pairings in the ring $H^\ast(N^g)$. He mentions in \cite[\S 5]{thaddeus-conformal} that $\beta^g=0$, and from his intersection pairing formula for $\alpha^n\beta^m$ one can read off that $\alpha^{g-1}\beta^{g-1}[N^g] = (-1)^g2^{2g-2}(g-1)!$, which implies $\beta^{g-1}\neq 0$.\\

\vspace{.60cm}

\noindent \textbf{The instanton homology of a surface times a circle}\\

\noindent 
Now we return to instanton homology. We first provide some background, and then relate our results in the introduction to the results about $H^\ast(N^g)$ listed above.

For a closed, oriented, connected 3-manifold $Y$ and a Hermitian line bundle $w\longrightarrow Y$ such that $c_1(w)$ has odd pairing with some integral homology class, following \cite{km-sutures} we denote Floer's relatively $\Z/8$-graded instanton homology by $I(Y)_w$. As in the introduction, we call the pair $(Y,w)$ {\emph{non-trivial admissible}}. Let $E$ be a rank two Hermitian vector bundle over $Y$ with determinant line bundle $w$. The chain complex for $I(Y)_w$ is generated by (irreducible, perturbed) projectively flat connections on $E$, and the differential counts (perturbed) instantons on $E \times \R$. An instanton is a connection $A$ on a bundle over a 4-manifold whose curvature $F_A$ satisfies the ASD equation

\vspace{.06cm}
\begin{equation*}
    F_A \; + \; \star F_A \; = \; 0
\end{equation*}
\vspace{.00cm}

\noindent where $\star$ is the Hodge star operation. The isomorphism class of the complex vector space $I(Y)_w$ depends only on $Y$ and the isomorphism class of the adjoint bundle associated to $E$ with structure group $SO(3)$, which is determined by $w_2(E)$.

The gauge transformations used in the construction of $I(Y)_w$ are those automorphisms of $E$ that have fiberwise determinant equal to one. Let $R$ be an oriented surface in $Y$ that has odd pairing with $c_1(w)$ and let $\xi_R$ be a real line bundle with $w_1(\xi_R)$ dual to $R$. Then $E \longmapsto E\otimes \xi_R$ gives rise to a map on the space of connections. The induced effect on $I(Y)_w$ is a degree four involution. We write $I(Y)_{w}'$ for the relatively $\Z/4$-graded vector space obtained by modding out this involution. Alternatively, this is the group obtained by using a slightly larger gauge group that contains the determinant one transformations as an index two subgroup.

Let $w\longrightarrow \Sigma\times S^1$ have first chern class dual to the circle factor. The critical set of the Chern-Simons functional mod determinant one gauge transformations is identified with

\vspace{.15cm}

\[
    \Big\{ \rho\in \text{Hom}(\pi_1(\Sigma_0\times S^1),SU(2)): \;\; \rho(\partial \Sigma_0\times \text{pt.})=-I\Big\}/SU(2).
\]
\vspace{.00cm}

\noindent Each such representation $\rho$ evaluates as $+I$ or $-I$ around the circle factor, and this splits the set into two identical pieces, each a copy of $N^g$. As mentioned in the introduction, if certain transversality assumptions are met, Fukaya \cite{fukaya} shows that there is a spectral sequence with $E^1$-page two copies of $H^\ast(N^g)$ that converges to $I(\Sigma\times S^1)_w$. We mention here that we ignore the distinction between homology and cohomology in this context, identifying them via a duality isomorphism. The spectral sequence in this situation collapses and we have:

\vspace{.10cm}
\begin{equation}
    I(\Sigma\times S^1)_{w}' \;\cong\; H^\ast(N^g).\label{eq:veciso}
\end{equation}
\vspace{-.02cm}

\noindent The prime superscript on the left side of (\ref{eq:veciso}) has the effect of identifying the two copies of $H^\ast(N^g)$. In actuality, (\ref{eq:veciso}) is proven \cite{munoz-ring} without mention of spectral sequences, and the above collapsing is a corollary. We mention that (\ref{eq:veciso}) also follows from work of Dostoglou-Salamon \cite{ds}. The isomorphism is $\Z/4$-graded, if one collapes the $\Z$-grading on $H^\ast(N^g)$ modulo 4. Thus the Poincar\'{e} polynomial (\ref{eq:poincare}) determines the betti numbers of $I(\Sigma\times S^1)_{w}$.

We emphasize that (\ref{eq:veciso}) is an isomorphism of {\emph{vector spaces}}. There is a ring structure on $I(\Sigma\times S^1)_{w}'$ that we will review in Section \ref{sec:ring}, but (\ref{eq:veciso}) is {\emph{not}} an isomorphism of rings: the instanton homology ring has a product which is a deformation of the cup product in $H^\ast(N^g)$. Also, we mention:\\

\vspace{.30cm}

\noindent \textbf{Notation:} {\emph{The $u$-map on $I(\Sigma\times S^1)'_w$ will be denoted $\beta$, in alignment with Mu\~{n}oz's notation.}}\\

\vspace{.30cm}

\noindent Now we turn to the results stated in the introduction. Theorem \ref{prop:main} regarding the nilpotency degree of $\beta^2-64$ in the ring $I(\Sigma\times S^1)_w'$ is the instanton analogue of (\ref{eq:betanilp}), the nilpotency of $\beta$ in the ring $H^\ast(N^g)$. Note that the powers of $\beta$ appearing in the minimal nilpotency relations in the two separate cases differ roughly by a factor of two.

Next, the Gysin sequence (\ref{eq:gysin}) for the fibration (\ref{eq:fibration}) may be viewed as the singular cohomology analogue of the instanton exact sequence (\ref{eq:twistedgysin}) in the introduction as applied to a surface times a circle. The former exact sequence represents the framed cohomology $H^\ast(N_0^g)$ as the mapping cone of multiplication by $\beta$ on the ring $H^\ast(N^g)$, while the latter represents $I^\#(\Sigma\times S^1)_w$ as the mapping cone of multiplication by $\beta^2-64$ on the ring $I(\Sigma\times S^1)_w'$. 

Finally, the comparison of the betti numbers in Tables \ref{fig:table1} and \ref{fig:table2} shows that in contrast to the isomorphism (\ref{eq:veciso}) in the unframed situation, the framed instanton homology $I^\#(\Sigma\times S^1)_w$ is {\emph{not}} simply the homology of the critical set $N_0^g\sqcup N_0^g$. Assuming that the construction of the spectral sequence (\ref{eq:specseq}) carries through, one may think of this non-collapsing as (roughly) an indication that there exist isolated instantons (modulo translation, and for generic perturbations) on the relevant non-trivial bundle over the cylinder $(\Sigma\times S^1\# T^3)\times \R$. 

%It would be interesting to understand the non-trivial differentials of this spectral sequence, and its construction in general, from a purely algebraic perspective.

\vspace{.35cm}

%%%%%%%%%%%%%%%
\section{The ring structure of the instanton homology} \label{sec:ring}

We review the ring structure of the instanton Floer homology $I(\Sigma\times S^1)'_w$ given by Mu\~{n}oz. The ring multiplication on this vector space is defined as follows: a pair of pants cobordism from $S^1\sqcup S^1$ to $S^1$ crossed with $\Sigma$ yields a (3+1)-dimensional cobordism, inducing the product map

\vspace{.10cm}

\[
    I(\Sigma\times S^1)_w'\otimes I(\Sigma\times S^1)_w' \longrightarrow I(\Sigma\times S^1)'_w.
\]
\vspace{.05cm}

\noindent The relative Donaldson invariants from $X=\Sigma\times D^2$, with the bundle data $w$ over the boundary $\Sigma\times S^1$ extended in an appropriate way, generate this ring. More precisely, if $\phi^w(X,z)$ denotes the relative Donaldson invariant in $I(\Sigma\times S^1)_w'$ for some $z\in \mathbb{A}(X):=\text{Sym}^\ast\left(H_0(X)\oplus H_2(X)\right) \otimes \Lambda^\ast H_1(X)$, the following are generators, where $\gamma_i$ runs over a $2g$ element basis of $H^1(\Sigma)\subset \mathbb{A}(X)$:

\vspace{.10cm}
\[
    \alpha = 2\phi^w(X,\Sigma), \qquad \psi_i = \phi^w(X,\gamma_i), \qquad \beta=-4\phi^w(X,x).
\]
\vspace{.10cm}

\noindent The endomorphism $u$ from the introduction is multiplication by $\beta$. Next, the action of the diffeomorphism group of $\Sigma$ on Floer homology factors through an action of $\text{Sp}(2g,\Z)$. One can decompose the ring into summands using this action. To facilitate this, let $H=H^1(\Sigma)$, and suppose $\{\gamma_i,\gamma_{i+g}\}_{i=1}^g$ is a symplectic basis for $H$ with $\gamma_i\cdot \gamma_{i+g} = 1$. We define the primitive component $\Lambda^k_0 H$ of $\Lambda^k H$:
\vspace{.10cm}

\[
    \Lambda^k_0 H = \text{ker}\left(\;\;\cdot \wedge \gamma^{g-k+1}:\;\Lambda^k H \longrightarrow \Lambda^{2g-k+2} H\right), \qquad \gamma := -2\sum_{i=1}^g \gamma_i\wedge \gamma_{i+g}.
\]
\vspace{.10cm}

\noindent Mu\~{n}oz gave the following explicit description of the instanton Floer homology ring:\\
\vspace{.10cm}

\begin{theorem}[\cite{munoz-ring}]\label{thm:munoz}
    The ring $I(\Sigma\times S^1)_w'$ is isomorphic to
\begin{equation}
    \bigoplus_{k=0}^g \Lambda_0^k H \otimes \C[\alpha,\beta,\gamma]/J_{g-k}\label{eq:ring}
\end{equation}
where $J_{k}=(\zeta_k,\zeta_{k+1},\zeta_{k+2})$ are ideals with generators inductively defined by $\zeta_i=0$ for $i<0$, $\zeta_0=1$ and\\

\begin{equation}
    \zeta_{k+1} = \alpha\zeta_k + k^2(\beta + (-1)^k8)\zeta_{k-1} + 2k(k-1)\gamma\zeta_{k-2}.\label{eq:rels}
\end{equation}
\end{theorem}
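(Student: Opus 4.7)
The vector-space identification $I(\Sigma\times S^1)'_w \cong H^\ast(N^g)$ is available independently of this theorem (via Dostoglou--Salamon, or from the collapse of the Bott-Morse spectral sequence in the unframed case), so the task is specifically to pin down the \emph{ring} structure. My strategy is to present the ring by generators coming from relative Donaldson invariants on $X = \Sigma\times D^2$, and then use the $\text{Sp}(2g,\Z)$ symmetry to reduce the determination of relations to a one-variable recursion problem.

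First I would verify that the assignment $z\mapsto \phi^w(X,z)$ surjects $\mathbb{A}(X)$ onto $I(\Sigma\times S^1)'_w$, so that $\alpha,\beta,\psi_1,\ldots,\psi_{2g}$ generate the ring. This is routine functoriality of the $\mu$-map together with the observation that the pair-of-pants cobordism realizes the product and that the unit of the ring is the relative invariant with $z=1$. The mapping class group acts through $\text{Sp}(2g,\Z)$, and since $\alpha,\beta,\gamma$ are manifestly invariant, the ring splits canonically into isotypic components indexed by the primitive subspaces $\Lambda^k_0 H \subset \Lambda^k H$. Summing over these pieces, the ring necessarily has the shape $\bigoplus_k \Lambda^k_0 H \otimes R_{g,k}$ for some graded quotients $R_{g,k}$ of $\C[\alpha,\beta,\gamma]$.

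Next I would identify $R_{g,k}$ with $\C[\alpha,\beta,\gamma]/J_{g-k}$. The sequence $\zeta_k$ records the iterated action of $\alpha$ on a distinguished class in each primitive component; the three-term shape of \eqref{eq:rels} reflects that $\alpha$ raises degree by $2$ while $\beta$ and $\gamma$ raise it by $4$, leaving no other reductions available. The classical analogue -- the relations in $H^\ast(N^g)$ of King--Newstead, Siebert--Tian and Zagier -- has exactly this shape except that the ``$(-1)^k\cdot 8$'' correction is absent classically. Once the corrected recursion is established, one can check that $J_{g-k}$ is the full ideal of relations by comparing dimensions term-by-term against the Poincar\'e polynomial \eqref{eq:poincare}: the total dimension of the proposed presentation equals $\dim H^\ast(N^g)$, so surjectivity forces an isomorphism of rings.

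The main obstacle is the appearance of the constants $\pm 8$ in \eqref{eq:rels}, which are not extractable by classical topological means. Isolating them is a genuine gauge-theoretic calculation, requiring either direct chain-level analysis of the ASD moduli spaces over a neck-stretched $X$ with reducibles present, or an appeal to wall-crossing formulas for Donaldson series on $\Sigma\times S^2$. The sign alternation $(-1)^k$, together with the precise polynomial coefficients $k^2$ and $2k(k-1)$, then requires careful bookkeeping of the relative $\Z/8$-grading and the dimensions of the relevant moduli components.
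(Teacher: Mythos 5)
The theorem is cited from \cite{munoz-ring}; the paper under review states it without proof, so there is no internal argument to compare against. Judged on its own, your sketch reproduces the high-level architecture one would expect — generators from relative Donaldson invariants on $\Sigma\times D^2$, an $\text{Sp}(2g,\Z)$-isotypic decomposition indexed by the primitive pieces $\Lambda_0^k H$, and a dimension count against the Poincar\'e polynomial of $N^g$ — but it defers the entire substance of the theorem. You observe that extracting the $(-1)^k\cdot 8$ correction and the coefficients $k^2$ and $2k(k-1)$ ``is a genuine gauge-theoretic calculation,'' and that is exactly where all of Mu\~{n}oz's work lies; until those relations are actually exhibited, the dimension-comparison step cannot run, since it presupposes already knowing that $\zeta_{g-k}, \zeta_{g-k+1}, \zeta_{g-k+2}$ map to zero in each summand. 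There is also a secondary gap: the assertion that $\text{Sp}(2g,\Z)$-equivariance alone forces the shape $\bigoplus_k \Lambda_0^k H \otimes R_{g,k}$ with multiplicity one and no other irreducibles appearing is not automatic from symmetry; one must verify by hand that the $\psi_i$ generate precisely the primitive components and that the invariant subring is exhausted by $\alpha,\beta,\gamma$, which Mu\~{n}oz establishes by explicit analysis patterned on the Kirwan/King--Newstead presentation of $H^\ast(N^g)$, not by a formal representation-theoretic argument. So the proposal is a plausible table of contents for a proof, not a proof.
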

\vspace{.65cm}

\noindent Under this isomorphism, $\alpha$ and $\beta$ are as defined above, while $\psi_i$ corresponds to $\gamma_i$.

If the term $(-1)^k8$ in the recurrence relation (\ref{eq:rels}) is {\emph{replaced by zero}} for each $k$, then the result is instead the cohomology ring $H^\ast(N^g)$. Such a presentation for $H^\ast(N^g)$ was at least in part conjectured by Mumford and established by several authors \cite{king-newstead, st, b,zagier}. A complete set of relations for the cohomology ring $H^\ast(N^g)$ was found earlier by Kirwan \cite{kirwan}.

Thus the instanton Floer homology is a deformation of the ring $H^\ast(N^g)$. We mention two other well-known rings that are isomorphic to this instanton homology, both of which rely on $N^g$ carrying a natural symplectic structure. First, there is the {\emph{quantum cohomology}} of $N^g$. This isomorphism was established by Mu\~{n}oz in \cite{munoz-quantum}. Second, there is the {\emph{symplectic Floer homology}} of $N^g$, with respect to the identity symplectomorphism. This latter ring was earlier known to be isomorphic to the aforementioned quantum cohomology ring \cite{pss}.

\vspace{.5cm}

%%%%%%%%%%%%%%%
\section{Computing the nilpotency degree} \label{sec:nilpotent}

In this section we prove Theorem \ref{prop:main}. As before, $g\geqslant 1$ denotes the genus of our surface $\Sigma$. Using the ring isomorphism (\ref{eq:ring}), and identifying $u$ with multiplication by $\beta$, the statement of Proposition \ref{prop:main} transforms into one about the ideals $J_g \subset \C[\alpha,\beta,\gamma]$. Two easily verified properties of this sequence of recursively defined ideals are the inclusions $J_g \subset J_{g-1}$ and $\gamma J_g \subset J_{g+1}$. Now, if we define

\vspace{.10cm}
\[
    n_g \;\; = \;\; \text{min}\left\{n\geqslant 1: \;\; (\beta^2-64)^n \in J_g \right\}
\]
\vspace{.03cm}

\noindent then Theorem \ref{prop:main} is equivalent to $n_g = 2\lceil g/2 \rceil -1$. For convenience, we define:
\[
    \beta_r = \beta + (-1)^r8, \qquad \beta_{+} = \beta + 8, \qquad \beta_{-} = \beta - 8.
\]
Next, we introduce notation for certain monomials in $\beta_\pm$ that help formalize the structure of the proof. First, define $\phi_r = \beta_-^{\lfloor r/2 \rfloor +1}\beta_+^{\lceil r/2 \rceil}$ and $\psi_r = \beta_-^{\lfloor r/2 \rfloor }\beta_+^{\lceil r/2 \rceil}$. The basic properties of these are:

\vspace{.10cm}
\begin{itemize}
\item[]
\begin{enumerate}
\item $\phi_0=\beta_-$ and $\psi_0=1$.
\item For all $r$, $\phi_{r+1}=\beta_r\phi_r$ and $\psi_{r+1}=\beta_r\psi_r$.
\item For all $r$, $\phi_r = \beta_-\psi_r$.
\end{enumerate}
\end{itemize}

\vspace{.2cm}

\noindent We also define $\rho_j = \beta_-^{2\lfloor \frac{j-1}{2}\rfloor}\beta_+^{j-1}$ and $\eta_j=\beta_-^{j-1}\beta_+^{j-1}$ for $j \geqslant 1$. For technical reasons we also define $\rho_j=1$ for $j<1$. We have the basic properties:

\vspace{.10cm}
\begin{itemize}
\item[]
\begin{enumerate}
\item $\rho_1=\eta_1=1$.
\item If $j>1$ is odd, then $\rho_j=\beta_-^2\beta_+\rho_{j-1}$ and $\rho_j=\eta_j$.
\item If $j$ is even, then $\rho_j=\beta_+\rho_{j-1}$ and $\beta_-\rho_j=\eta_j$.
\end{enumerate}
\end{itemize}

\vspace{.2cm}

\begin{lemma} \label{lemma:above}
For all natural numbers $r\geqslant 1$ and $0\leqslant j \leqslant r$, we have:
\begin{itemize}
\item[]
\begin{enumerate}
\item[{\emph{(i)}}] $\rho_j\phi_{r-j}\zeta_{r-j} \in J_r$.
\item[{\emph{(ii)}}] $\alpha\eta_j\psi_{r-j}\zeta_{r-j}\in J_r$.
\end{enumerate}
\end{itemize}
\end{lemma}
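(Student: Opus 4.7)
The plan is to prove (i) and (ii) simultaneously, by an outer induction on $r$ together with an inner induction on $s=r+1-j$ running downward from $s=r+1$ to $s=0$. The outer base $r=1$ is direct: from $\zeta_1=\alpha$ and $\zeta_2=\alpha^2+\beta_-$ one obtains $\beta_-=\zeta_2-\alpha\zeta_1\in J_1$, which settles all four cases.

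For the outer step from $r$ to $r+1$, the inner base $s=r+1$ is immediate because $\zeta_{r+1}$ generates $J_{r+1}$. For the inner step at $1\leqslant s\leqslant r$, I would rewrite the recurrence \eqref{eq:rels} at index $s+1$ as
\[
    (s+1)^2\beta_{s+1}\zeta_s \;=\; \zeta_{s+2}-\alpha\zeta_{s+1}-2(s+1)s\,\gamma\zeta_{s-1},
\]
and multiply through by $\rho_{r-s+1}\phi_{s-1}$ for (i), or by $\alpha\eta_{r-s+1}\psi_{s-1}$ for (ii). The identities $\phi_s=\beta_{s+1}\phi_{s-1}$ and $\psi_s=\beta_{s+1}\psi_{s-1}$---which follow from $\beta_{s-1}=\beta_{s+1}$ and the basic properties listed above---turn the left side into a nonzero scalar times the element we wish to place in $J_{r+1}$. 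The three right-hand terms are absorbed, respectively, by: (a) the inner inductive hypothesis applied one step deeper (at $s+2$), with the edge cases $s\in\{r-1,r\}$ being trivial since $\zeta_{s+2}$ is then a generator of $J_{r+1}$; (b) the \emph{coupled} inner inductive hypothesis at depth $s+1$, where (i) calls on (ii) and vice versa via an identity of the form $\rho_{r-s+1}\phi_{s-1}=\beta_-^{\epsilon}\eta_{r-s}\psi_{s+1}$ with $\epsilon\in\{0,1\}$; and (c) the outer inductive hypothesis at $r$, combined with the inclusion $\gamma J_r\subset J_{r+1}$ noted in the previous paragraph of the paper, applied to the $\gamma\zeta_{s-1}$-term.

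The boundary case $s=0$ requires separate treatment since $\phi_{-1}$ and $\psi_{-1}$ are undefined. For (i), I would exploit $\beta_-=\zeta_2-\alpha^2$ to reduce $\rho_{r+1}\beta_-\in J_{r+1}$ to the two subclaims $\rho_{r+1}\zeta_2\in J_{r+1}$ and $\rho_{r+1}\alpha^2\in J_{r+1}$. The first is trivial when $r=1$ (since then $\zeta_2$ is already a generator of $J_2$), and for $r\geqslant 2$ it is a $\beta_+$-multiple of the already-proved inner case $s=2$ of (i); the second is a $\beta_-^{\epsilon}$-multiple, with $\epsilon\in\{0,1\}$ depending on the parity of $r$, of the already-proved inner case $s=1$ of (ii). The case (ii) at $s=0$ is analogous, reducing via the inner case $s=1$ of (i).

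The main obstacle throughout is index bookkeeping: one must verify that each ``cofactor'' appearing above---notably $\rho_{r-s+1}\phi_{s-1}/(\rho_{r-s-1}\phi_{s+2})$ and $\rho_{r-s+1}\phi_{s-1}/(\eta_{r-s}\psi_{s+1})$, together with their (ii)-analogues---is a genuine polynomial monomial in $\beta_+,\beta_-$ rather than a rational expression. A short parity case split on $r-s$ and $s$ shows that each cofactor simplifies to one of $1,\beta_+,\beta_-,\beta_s$. This is precisely what the floor/ceiling exponents in the definitions of $\rho_j,\phi_r,\eta_j,\psi_r$ are engineered to guarantee, and it is why the statement of the lemma takes this specific form.
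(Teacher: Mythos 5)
Your proposal follows essentially the same route as the paper's own proof: a coupled lexicographic induction on $(r,j)$ (your outer-$r$, inner-$s$ scheme with $s=r+1-j$ is the paper's ordering in reverse variable), isolating $\zeta_{r-j}$ from the recurrence \eqref{eq:rels} and absorbing the three right-hand terms via monomial identities in $\beta_\pm$ together with the inclusion $\gamma J_{r-1}\subset J_r$. One small slip: the coupling runs only from (i) to (ii) — in the paper's treatment, the middle term of (ii) is handled by peeling off a factor of $\alpha$ and invoking (ii) itself at $(r,j-1)$ via $\alpha\eta_j\psi_{r-j-1}=\alpha\eta_{j-1}\psi_{r-j+1}$, not by calling (i) — but this does not change the structure or correctness of what you propose.
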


\vspace{.2cm}
\begin{proof}
The proof is an induction on $(r,j)$ under the lexicographic ordering. Note that the cases $(r,0)$ follow from the definition of $J_r$. When $j>r$ the statement of the lemma is true if we define $\zeta_{k}=0$ for $k<0$. Suppose the statement holds below $r$ and up to but not including $j$ at $r$. In the recursive relation (\ref{eq:rels}), set $k=r-j+1$, multiply both sides by $\rho_j\phi_{r-j-1}$, and rearrange, to obtain:
\vspace{.15cm}
\begin{equation}
    \rho_j\phi_{r-j}\zeta_{r-j} = c_1\underbracket[.01cm]{\rho_j\phi_{r-j-1}\zeta_{r-j+2}}_{\text{(I)}} + c_2 \underbracket[.01cm]{\alpha \rho_j\phi_{r-j-1}\zeta_{r-j+1}}_{\text{(II)}} + c_3 \underbracket[.01cm]{\gamma \rho_j\phi_{r-j-1}\zeta_{r-j-1}}_{\text{(III)}}\label{eq:lemma}
\end{equation}
\vspace{.15cm}

\noindent where the $c_i$ are rational numbers. We have labelled the terms on the right side (I), (II), (III) from left to right. We show that these three terms are in $J_r$ using our induction hypothesis. We begin with term (I). We compute that the factor in front of $\zeta_{r-j+2}$ in term (I) is
\[
    \rho_j\phi_{r-j-1}=\beta_-^2\beta_+^2\rho_{j-2}\phi_{r-j-1} =\beta_{r-j}\rho_{j-2}\phi_{r-j+2},
\]
implying term (I) is in $J_r$ by the induction hypothesis at $(r,j-2)$. Term (III) is in $\gamma J_{r-1}\subset J_r$ by the induction hypothesis at $(r-1,j)$. Finally, we consider term (II). If $j$ is even, then 
\[
    \alpha\rho_j \phi_{r-j-1}= \alpha\beta_-\beta_+\rho_{j-1}\psi_{r-j-1}=\alpha\rho_{j-1}\psi_{r-j+1} = \alpha\eta_{j-1}\psi_{r-j+1}.
\]
If $j$ is odd, we compute the same, but times $\beta_-$. Thus term (II) is in $J_r$ using the induction hypothesis at $(r,j-1)$. We conclude $\rho_j\phi_{r-j}\zeta_{r-j}\in J_r$, proving (i) at the induction step $(r,j)$.

Now we prove (ii) at step $(r,j)$. The argument has a similar structure. With $k=r-j+1$, multiply both sides of (\ref{eq:rels}) by $\alpha\eta_j\psi_{r-j-1}$ and rearrange, to obtain:
\vspace{.15cm}
\[
    \alpha \eta_j \psi_{r-j} \zeta_{r-j} = c_1\underbracket[.01cm]{\alpha\eta_j\psi_{r-j-1}\zeta_{r-j+2}}_{\text{(I)}} + c_2 \underbracket[.01cm]{\alpha^2 \eta_j\psi_{r-j-1}\zeta_{r-j+1}}_{\text{(II)}} + c_3 \underbracket[.01cm]{\alpha\gamma \eta_j\psi_{r-j-1}\zeta_{r-j-1}}_{\text{(III)}}.
\]
\vspace{.15cm}

\noindent Now $\alpha\eta_j\psi_{r-j-1}$ is equal to $\alpha \beta_{r-j}\eta_{j-2}\psi_{r-j+2}$, whence term (I) is in $J_{r}$ by the induction hypothesis at $(r,j-2)$. Next, $\alpha\eta_j\psi_{r-j-1}$ is equal to $\alpha\eta_{j-1}\psi_{r-j+1}$, so term (II) is also in $J_r$ by induction at $(r,j-1)$. Finally, the third term is in $\gamma J_{r-1}\subset J_r$ by the inductive assumption at $(r-1,j)$. Therefore the left-hand term $\alpha\eta_j\psi_{r-j}\zeta_{r-j}\in J_r$, completing the proof of (i) and (ii) by induction.
\end{proof}
\vspace{.6cm}

\noindent We will soon see that part (i) of Lemma \ref{lemma:above} provides the desired upper bound for $n_g$. Part (ii) is only included as an extra inductive assumption in order to carry through the proof of (i). We also need to bound $n_g$ from below, for which we use a lemma regarding non-inclusion. For this we use the following fact from \cite{munoz-ring}: the images of monomials

\vspace{.07cm}
\[
    \alpha^i\beta^j\gamma^k, \qquad i+j+k < g,\quad i,j,k\geqslant 0
\]
\vspace{.03cm}

\noindent form a vector space basis for the quotient ring $\C[\alpha,\beta,\gamma]/J_g$, which is the $\text{Sp}(2g,\Z)$-invariant part of the ring $I(\Sigma\times S^1)_w'$. In particular, we have the non-inclusion $\gamma^{g-1}\not\in J_g$.\\

\vspace{.20cm}

\begin{lemma} \label{lemma:below}
Suppose $g$ is odd and $j\leqslant (g-1)/2$. Then $\beta_-^{j+(g-1)/2}\beta_+^{g-1}\zeta_{g-2j-1}$ is equivalent modulo the ideal $J_g$ to a positive rational multiple of $\gamma^{g-1}$.
\end{lemma}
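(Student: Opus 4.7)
The plan is to induct on $j$ from $j=0$ up to $j=(g-1)/2$, using the recurrence (\ref{eq:rels}) in tandem with Lemma \ref{lemma:above}. Write $m = (g-1)/2$ and set $X_j := \beta_-^{j+m}\beta_+^{g-1}\zeta_{g-2j-1}$, so the goal becomes $X_j \equiv c_j \gamma^{g-1} \pmod{J_g}$ with each $c_j$ a positive rational. Setting $k = g-2j$ in (\ref{eq:rels})---noting that $k$ is odd since $g$ is odd, so $\beta + (-1)^k 8 = \beta_-$---and multiplying the rearranged identity by $\beta_-^{j+m-1}\beta_+^{g-1}$ yields the key identity
\[
    (g-2j)^2 X_j \;=\; X_{j-1} \;-\; \alpha A_j \;-\; 2(g-2j)(g-2j-1)\gamma B_j,
\]
where $A_j := \beta_-^{j+m-1}\beta_+^{g-1}\zeta_{g-2j}$ and $B_j := \beta_-^{j+m-1}\beta_+^{g-1}\zeta_{g-2j-2}$. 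The inductive step then consists in showing that both correction terms $\alpha A_j$ and $\gamma B_j$ lie in $J_g$, so that the identity collapses modulo $J_g$ to $(g-2j)^2 X_j \equiv X_{j-1}$; the inductive hypothesis $X_{j-1} \equiv c_{j-1}\gamma^{g-1}$ then delivers $X_j \equiv c_{j-1}/(g-2j)^2 \cdot \gamma^{g-1}$ with $c_j > 0$.

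For the $\alpha$-term I would invoke Lemma \ref{lemma:above}(ii) with ambient index $r=g$ and its inductive index set to $2j$: using that $g-2j$ is odd, a direct computation gives $\eta_{2j}\psi_{g-2j} = \beta_-^{j+m-1}\beta_+^{j+m}$, so $\alpha\beta_-^{j+m-1}\beta_+^{j+m}\zeta_{g-2j}\in J_g$, and multiplying by the nonnegative power $\beta_+^{m-j}$ yields $\alpha A_j\in J_g$. The $\gamma$-term is subtler because the $\zeta$-index $g-2j-2$ is even, putting it outside the immediate scope of Lemma \ref{lemma:above}; to handle it I would re-apply (\ref{eq:rels}) at $k = g-2j-1$ (which is even, so $\beta_k=\beta_+$), solve for $\beta_+\zeta_{g-2j-2}$, substitute, and reduce each resulting piece into $J_g$ using Lemma \ref{lemma:above} together with the inclusion $\gamma J_{g-1}\subset J_g$. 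The base case $j=0$---which asks that $\beta_-^m\beta_+^{g-1}\zeta_{g-1}$ reduce modulo $J_g$ to a positive multiple of $\gamma^{g-1}$---I would address by iteratively reducing $\zeta_{g-1}$ via (\ref{eq:rels}), tracking the top-$\gamma$-degree contribution, and invoking the generating relations $\zeta_g = \zeta_{g+1} = \zeta_{g+2} = 0$ of $J_g$ to convert the residual $\beta_\pm$ factors into $\gamma$-factors. Positivity propagates because each $\gamma$-coefficient $2k(k-1)$ in (\ref{eq:rels}) is positive.

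The principal obstacle is the treatment of the $\gamma B_j$ term. Its even-parity $\zeta$-index puts it outside the direct reach of Lemma \ref{lemma:above}, so either an even-parity companion lemma must be proved, or the auxiliary recurrence substitution sketched above must be carried out in sufficient detail to place $\gamma B_j$ into $J_g$. A secondary difficulty is the base case, whose positive-multiple conclusion rests on a careful sign-tracking computation inside the quotient ring $\C[\alpha,\beta,\gamma]/J_g$.
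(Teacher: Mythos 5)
Your key identity, obtained from (\ref{eq:rels}) at $k=g-2j$ and multiplying through by $\beta_-^{j+m-1}\beta_+^{g-1}$, is correct, as is the disposal of the $\alpha A_j$ term via Lemma \ref{lemma:above}(ii): the identification $\eta_{2j}\psi_{g-2j}=\beta_-^{j+m-1}\beta_+^{j+m}$ and the use of the remaining factor $\beta_+^{m-j}$ are fine. The gap is in the treatment of $\gamma B_j$. You propose to show $\gamma B_j\in J_g$, but that inclusion is false for $0<j<(g-1)/2$. Carrying out your suggested substitution from (\ref{eq:rels}) at $k=g-2j-1$ produces three sub-pieces: two of them, proportional to $\gamma\beta_-^{j+m-1}\beta_+^{g-2}\zeta_{g-2j}$ and $\alpha\gamma\beta_-^{j+m-1}\beta_+^{g-2}\zeta_{g-2j-1}$, indeed land in $\gamma J_{g-1}\subset J_g$ by Lemma \ref{lemma:above} at genus $g-1$; but the third, proportional to $\gamma^2\beta_-^{j+m-1}\beta_+^{g-2}\zeta_{g-2j-3}$, does not. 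That last piece is itself a positive rational multiple of $\gamma^{g-1}$ modulo $J_g$, and it contributes genuinely. Thus your recursion must have the form $(g-2j)^2X_j\equiv X_{j-1}+(\text{positive multiple of }\gamma^{g-1})$ modulo $J_g$; positivity survives, but you cannot arrive there by pushing $\gamma B_j$ into the ideal.

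The more fundamental problem is the inductive framework. Showing that the stubborn third sub-piece is a positive multiple of $\gamma^{g-1}$ is exactly the statement of the lemma at genus $g-2$ and index $j$ (after rewriting $\beta_+=\beta_-+16$ to shave a $\beta_+$ and then using $\gamma^2 J_{g-2}\subset J_g$). The $j=0$ base case has the same feature: when you iteratively reduce $\zeta_{g-1}$ via (\ref{eq:rels}), the computation bottoms out at a $\gamma^2\zeta_{g-3}$ term and you need the genus-$(g-2)$, $j=0$ case to conclude. So a pure induction on $j$ with $g$ fixed cannot close: you need a double induction on $(g,j)$ (the paper inducts first on $g$, then on $j$). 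Without the genus step your argument has no base case and no way to handle the two places where the $\gamma^{g-1}$ multiple is actually produced.
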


\vspace{.2cm}

\begin{proof}

We use the notation $a \equiv_g b$ to mean that $a$ and $b$ are equivalent modulo $J_g$. The proof is by induction: first on $g$, then on $j$. The base case $g=1$ is trivial. Now we handle the induction step for $g>1$; since $g$ is odd, in fact $g\geqslant 3$. If $j=0$ then we compute:

\begin{align*}
    \beta_-^{(g-1)/2}\beta_+^{g-1}\zeta_{g-1} & \;=\;\;\; g^{-2}\beta_-^{(g-3)/2}\beta_+^{g-1}\left(\zeta_{g+1} - \alpha\zeta_{g}-2g(g-1)\gamma\zeta_{g-2}\right)\\
    & \;\equiv_g \; -2g^{-1}(g-1)\beta_-^{(g-3)/2}\beta_+^{g-1}\gamma\zeta_{g-2}\\
    & \; = \;\;\;  -2g^{-1}(g-1)^{-1}\beta_-^{(g-3)/2}\beta_+^{g-2}\gamma\left(\zeta_g -\alpha\zeta_{g-1}-2(g-1)(g-2)\gamma\zeta_{g-3}\right)\\
    & \;\equiv_g \; 4g^{-1}(g-2)\beta_-^{(g-3)/2}\beta_+^{g-2}\gamma^2\zeta_{g-3}\\
    & \; = \;\;\; 4g^{-1}(g-2)\beta_-^{(g-3)/2}(\beta_-+16)\beta_+^{g-3}\gamma^2\zeta_{g-3}\\
    & \; = \;\;\; 4g^{-1}(g-2)\beta_-^{(g-1)/2}\beta_+^{g-3}\gamma^2\zeta_{g-3} + 16\cdot 4g^{-1}(g-2)\beta_-^{(g-3)/2}\beta_+^{g-3}\gamma^2\zeta_{g-3}\\
    & \; \equiv_g \; 64g^{-1}(g-2)\beta_-^{(g-3)/2}\beta_+^{g-3}\gamma^2\zeta_{g-3}.
\end{align*}

\noindent The last equivalence follows because Lemma \ref{lemma:above} implies that $\beta_-^{(g-1)/2}\beta_+^{g-3}\zeta_{g-3}\in J_{g-2}$ and therefore the first term $4g^{-1}(g-2)\beta_-^{(g-1)/2}\beta_+^{g-3}\gamma^2\zeta_{g-3}$ of the penultimate line is a member of $J_g$. By the induction hypothesis at $(g-2,0)$, we conclude that $\beta_-^{(g-1)/2}\beta_+^{g-1}\zeta_{g-1}$ is a positive rational multiple of $\gamma^{g-1}$ modulo $J_g$. If $j>0$ then begin similarly as above to obtain

\begin{align*}
    \beta_-^{j+(g-1)/2}\beta_+^{g-1}\zeta_{g-2j-1} \; = \; & \;\;\;\, (g-2j)^{-2}\beta_-^{j+(g-3)/2}\beta_+^{g-1}\zeta_{g-2j+1}\\
    & -(g-2j)^{-2}\beta_-^{j+(g-3)/2}\beta_+^{g-1}\alpha\zeta_{g-2j}\\
    &- 2(g-2j-1)(g-2j)^{-1}\beta_-^{j+(g-3)/2}\beta_+^{g-1}\gamma\zeta_{g-2j-2}.
\end{align*}
\vspace{.10cm}

\noindent By the inductive assumption at $(g,j-1)$, the first term on the right hand side is a positive rational multiple of $\gamma^{g-1}$ modulo $J_g$. By Lemma \ref{lemma:above} (ii), the second term on the right is a member of $J_g$. It remains to show that the third term on the right is a positive rational multiple of $\gamma^{g-1}$ modulo $J_g$. Using the recursive definition of $\zeta_{g-2j}$, we have

\begin{align*}
    -\beta_-^{j+(g-3)/2}\beta_+^{g-1}\gamma\zeta_{g-2j-2} \; = \; & -(g-2j-1)^{-2}\beta_-^{j+(g-3)/2}\beta_+^{g-2}\gamma\zeta_{g-2j}\\
    & +(g-2j-1)^{-2}\beta_-^{j+(g-3)/2}\beta_+^{g-2}\alpha\gamma\zeta_{g-2j-1}\\
    & +2(g-2j-2)(g-2j-1)^{-1}\beta_-^{j+(g-3)/2}\beta_+^{g-2}\gamma^2\zeta_{g-2j-3}.
\end{align*}
\vspace{.10cm}

\noindent By Lemma \ref{lemma:above}, the first and second terms on the right hand side are members of $J_{g-1}$. The third term on the right side is a positive rational multiple of $\gamma^{g-1}$ modulo $J_g$ because

\vspace{.05cm}
\[
    \beta_-^{j+(g-3)/2}\beta_+^{g-2}\gamma^2\zeta_{g-2j-3} \; \equiv_g \; 16\beta_-^{j+(g-3)/2}\beta_+^{g-3}\gamma^2\zeta_{g-2j-3}
\]
\vspace{.10cm}

\noindent is a positive rational multiple of $\gamma^{g-1}$ modulo $J_{g}$ by the inductive assumption at $(g-2,j)$, for which the inclusion $\gamma^2 J_{g-2} \subset J_g$ has been used.
\end{proof}

\vspace{.65cm}

\noindent We now deduce Theorem \ref{prop:main} from these two lemmas.\\

\vspace{.25cm}

\begin{proof}[Proof of Theorem \ref{prop:main}]
We first use Lemma \ref{lemma:above} to prove $n_g\leqslant 2\lceil g/2 \rceil -1$. By (i) of that lemma,

\vspace{.05cm}
\begin{equation}
    \rho_g\phi_0 = \beta_-^{2\lfloor \frac{g-1}{2} \rfloor + 1}\beta_+^{g-1} \in J_g.\label{eq:uselemma}
\end{equation}
\vspace{.10cm}

\noindent If $g$ is even, this immediately implies the inequality, while if $g$ is odd, we multiply (\ref{eq:uselemma}) by $\beta_+$. To obtain the reverse inequality, we first consider Lemma \ref{lemma:below}, which for $g$ odd implies

\vspace{.05cm}
\begin{equation*}
     \beta_-^{g-1}\beta_+^{g-1} \; \equiv_g\; c\cdot \gamma^{g-1}\not\in J_g\label{eq:uselemma2}
\end{equation*}
\vspace{.10cm}

\noindent where $c>0$. We conclude that $n_g\geqslant 2\lceil g/2 \rceil -1$ for $g$ odd. Next, since $J_g\subset J_{g-1}$, we have $n_{g} \geqslant n_{g-1}$, proving the same inequality for $g$ even, and completing the proof of Theorem \ref{prop:main}.
\end{proof}

\vspace{.55cm}

%%%%%%%%%%%%%%%
\section{The framed instanton homology} \label{sec:framed}

\vspace{.25cm}

In this section and the contained subsections we prove Theorem \ref{thm:main}. We first briefly discuss some notation. For a $\Z/4$-graded vector space $V$ we define its Poincar\'{e} polynomial to be

\vspace{.18cm}
\[
    P_t(V)\;= \; \dim V_0\cdot t^0 + \dim V_1\cdot t^1  + \dim V_2 \cdot t^2 + \dim V_3 \cdot t^3
\]
\vspace{.05cm} 

\noindent in which $V_i$ is the grading $i$ (mod 4) summand of $V$. We think of $P_t(V)$ as an element of the ring $\Z[t]/(t^4-1)$. The Poincar\'{e} polynomial of a tensor product (resp. direct sum) of $\Z/4$-graded vector spaces is the product (resp. sum) of the Poincar\'{e} polynomials of the factors. When $V$ is a quotient ring of the form $R/J$ where $J$ is an ideal, we write $P_t(J)$ to mean $P_t(V)$.

The proof of Theorem \ref{thm:main} amounts to a computation of the $\Z/4$-graded Poincar\'{e} polynomial of the framed instanton homology. This may be expanded as

\vspace{.25cm}
\begin{equation}
    P_t\left(I^\#(S^1\times \Sigma)_w\right) \;= \; (1+t^3)\sum_{k=0}^g \left({2g \choose k} - {2g \choose k-2} \right)t^{3k}P_t\left(K_{g-k}\right)\label{eq:poincarepolynomial}
\end{equation}
\vspace{.18cm}

\noindent in which $K_{g}$ is defined to be the kernel of multiplication by $\beta^2-64$ on the ring $\C[\alpha,\beta,\gamma]/J_g$. This expression is explained as follows. First, the general shape comes from the $\text{Sp}(2g,\Z)$-decomposition of $I(\Sigma\times S^1)_w'$ from Theorem \ref{thm:munoz}, where we have used that $\dim \Lambda^k_0 H = {2g\choose k} - {2g \choose k-2}$. Next, the terms $(1+t^3)P_t(K_{g-k})$ appear because of the mapping cone exact sequence (\ref{eq:twistedgysin}), characterizing the framed homology as the kernel plus cokernel of $u^2-64$. The factors $t^{3k}$ are included because the homogeneously graded vector space $\Lambda_0^k H$ has grading $3k$.

Save for some elementary manipulations, this reduces the computation to that of $P_t(K_g)$. As suggested in the introduction, we will break this into two pieces. We write $K_g^\pm$ for the kernel of multiplication by $\beta\pm 8$ on the ring $\C[\alpha,\beta,\gamma]/J_g$. The nilpotency of $\beta^2-64$ readily implies that

\vspace{.20cm}
\[
    P_t(K_g) \; = \; P_t(K_g^+) \; + \; P_t(K_g^-).
\]
\vspace{.10cm}

\noindent As $K_g^\pm$ is by definition the subspace of the ring $\C[\alpha,\beta,\gamma]/J_g$ on which $\beta$ acts as $\mp 8$, we may describe it as follows. Define $J^{\pm}_g$ to be the ideal in $\C[\alpha,\gamma]$ which is the image of $J_g$ under the evaluation map setting $\beta= \pm 8$. The definition of $J_g^{\pm}$ has the same recursion shape as before:

\vspace{.10cm}
\[
    J_g^{\pm} = (\zeta_g^\pm, \zeta_{g+1}^\pm, \zeta_{g+2}^\pm)
\]
\vspace{.05cm}

\noindent in which $\zeta_k^\pm = 0$ for $k<0$, the starting term is $\zeta_0^\pm =1$, and

\vspace{.05cm}
\begin{align}
    k \text{ even}:\qquad &\zeta^+_{k+1} = \alpha \zeta_k^+ + 16 k^2 \zeta_{k-1}^+ + 2k(k-1)\gamma\zeta_{k-2}^+ \label{eq:pe}\\
    k \text{  odd}:\qquad &\zeta^-_{k+1} = \alpha \zeta_k^-  - 16 k^2 \zeta_{k-1}^- + 2k(k-1)\gamma\zeta_{k-2}^-\label{eq:mo}\\
    k \text{ even}:\qquad &\zeta^-_{k+1} = \alpha \zeta_k^- + 2k(k-1)\gamma\zeta_{k-2}^-\phantom{16 k^2 \zeta_{k-1}^+}\label{eq:me}\\
    k \text{  odd}:\qquad &\zeta^+_{k+1} = \alpha \zeta_k^+ + 2k(k-1)\gamma\zeta_{k-2}^+ \phantom{16 k^2 \zeta_{k-1}^+} \label{eq:po}
\end{align}
\vspace{.10cm}

\noindent Then $K_g^\pm$ is isomorphic to the quotient $\C[\alpha,\gamma]/J_g^\mp$. By definition we thus have $P_t(K_g^\pm) = P_t(J_g^\mp)$. We henceforth focus on computing the four coefficients of $P_t(J_g^\pm)$. We remind the reader that the $\Z/4$-gradings of the elements $\alpha$ and $\gamma$ are both $2$ (mod 4). Also, from the defining formulae (\ref{eq:pe})-(\ref{eq:po}) for the polynomials $\zeta_g^\pm$ we record the following:

\vspace{.30cm}

\begin{equation}
    \zeta^\pm_g \;=\; \alpha^g \;\;+\;\; \text{lower order terms}.\label{eq:degalpha}
\end{equation}
\vspace{.10cm}

\noindent We proceed to compute $P_t(J_g^-)$ and $P_t(J_g^+)$ separately. These are the Poincar\'{e} polynomials for the $\Z/4$-graded kernels of $\beta+8$ and $\beta-8$, respectively.\\

\vspace{.45cm}

\subsection{The kernel of $\beta+8$.}
%\noindent\textbf{Computing the kernel of $\beta+8$.}\\

\vspace{.25cm}

\noindent We first compute $P_t(J^-_g)$. The key, as usual, is an understanding of how $J_g^-$ is related to $J_r^-$ with $r<g$. The situation is rather immediate for even indices.\\

\vspace{.20cm}

\begin{lemma}\label{lemma:minuseven}
    For $g\geqslant 2$ even, $J_g^-$ is generated by $\zeta_{g}^-$ and $\gamma J^-_{g-2}$. Further, for $i \geqslant 0$, $\gamma^{i}\zeta_{g-2i}^-\in J_g^-$.
\end{lemma}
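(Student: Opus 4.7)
The plan is to establish both assertions simultaneously, by combining the structure of the recurrences (\ref{eq:me}) and (\ref{eq:mo}) with induction on $g$. The key observation, specific to $g$ even, is that the $k=g$ instance of (\ref{eq:me}) has no $\zeta_{g-1}^-$ term:
\[
    \zeta_{g+1}^{-} \;=\; \alpha \zeta_{g}^{-} + 2g(g-1)\gamma \zeta_{g-2}^{-},
\]
while the $k=g+1$ instance of (\ref{eq:mo}) reads
\[
    \zeta_{g+2}^{-} \;=\; \alpha \zeta_{g+1}^{-} - 16(g+1)^2 \zeta_{g}^{-} + 2(g+1)g\gamma \zeta_{g-1}^{-}.
\]
These two identities immediately give the inclusion $J_g^{-} \subset (\zeta_{g}^{-}) + \gamma J_{g-2}^{-}$, since $\zeta_{g-2}^{-}$ and $\zeta_{g-1}^{-}$ are among the generators of $J_{g-2}^{-}$.

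For the reverse inclusion I would check $\gamma J_{g-2}^{-} \subset J_g^{-}$; this reduces to verifying $\gamma \zeta_{g-2}^{-} \in J_g^{-}$ and $\gamma \zeta_{g-1}^{-} \in J_g^{-}$, as $\gamma \zeta_{g}^{-} \in J_g^{-}$ is trivial. Both follow from the two displayed identities by rearranging to solve for $\gamma \zeta_{g-2}^{-}$ and $\gamma \zeta_{g-1}^{-}$ respectively; the coefficients $2g(g-1)$ and $2g(g+1)$ are non-zero for $g \geqslant 2$. With the convention $J_0^{-} = \C[\alpha,\gamma]$ (forced by $\zeta_0^{-} = 1$), the same argument handles the base case $g=2$ uniformly.

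For the second assertion I would induct on $i$, using the first assertion already proved at $g$ together with the full lemma at $g-2$. The case $i=0$ is trivial. For $i \geqslant 1$, the inductive hypothesis (the second assertion at $g-2$ with index $i-1$) yields $\gamma^{i-1}\zeta_{g-2i}^{-} \in J_{g-2}^{-}$, and multiplying by $\gamma$ together with $\gamma J_{g-2}^{-} \subset J_g^{-}$ gives $\gamma^i \zeta_{g-2i}^{-} \in J_g^{-}$. I do not expect any real obstacle: the whole lemma is essentially a bookkeeping consequence of the fact that (\ref{eq:me}) skips the $\zeta_{k-1}^{-}$ term for even $k$, so the structure of $J_g^{-}$ for even $g$ is particularly transparent.
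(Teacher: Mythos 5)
Your proposal is correct and follows essentially the same route as the paper: the paper also reads off from (\ref{eq:me}) at $k=g$ and (\ref{eq:mo}) at $k=g+1$ that the generators $\zeta^-_{g+1}$ and $\zeta^-_{g+2}$ of $J_g^-$ may be replaced by $\gamma\zeta^-_{g-2}$ and $\gamma\zeta^-_{g-1}$ respectively, giving $J_g^-=(\zeta^-_g,\gamma J^-_{g-2})$, and then obtains the second statement inductively from the first. The one small imprecision in your write-up is cosmetic: the induction for the second assertion is really on the even index $g$ (using the lemma at $g-2$ together with the containment $\gamma J^-_{g-2}\subset J^-_g$ just established), not an induction on $i$ at fixed $g$; but the logic you spell out is exactly that induction and is sound.
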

\vspace{.15cm}

\begin{proof}
For $g$ even, the generator $\zeta_{g+1}^-\in J^-_g$ can be replaced by $\gamma\zeta_{g-2}^-$, which is a non-zero rational multiple of $\zeta_{g+1}^--\alpha\zeta_{g}^-$ by (\ref{eq:me}). Similarly, the generator $\zeta_{g+2}^-$ can be replaced by $\gamma\zeta_{g-1}^-$ using the relation (\ref{eq:mo}), in which $k=g+1$. This implies that $J_g^-=(\zeta^-_g,\;\gamma J^-_{g-2})$, which is the first statement. The second statement follows inductively from the first.
\end{proof}

\vspace{.45cm}

\noindent This gives a recursion relation for the total dimension of $\C[\alpha,\gamma]/J_g^-$ when $g$ is even, in the following way. First, following standard terminology, when $J$ is an ideal in an algebra $R$ with $R/J$ of finite dimension, we define the {\emph{degree}} of $J$, written $\deg J$, to be $\dim R/J$. In particular, $\deg J_g^-$ is the sum of the four coefficients of $P_t(J_g^-)$. Next, consider the following standard exact sequence\\

\begin{equation*}
\begin{tikzcd}[column sep=large]
   0 \arrow{r} & \text{ker}(\gamma) \arrow{r} & \C[\alpha,\gamma]/J_{g}^- \arrow{r}{\gamma} &  \C[\alpha,\gamma]/J_{g}^- \arrow{r} & \text{coker}(\gamma) \arrow{r} & 0 
\end{tikzcd}
\end{equation*}
\vspace{.25cm}

\noindent in which the map $\gamma$ is induced from multiplication by $\gamma$. The above lemma together with (\ref{eq:degalpha}) implies that for $g\geqslant 2$ even, $\text{ker}(\gamma)=J_{g-2}^-/J_g^-$. The cokernel may be identified with $\C[\alpha]/I_g^-$ in which $I_g^-$ is defined as was $J_g^-$ by setting $\gamma=0$. Evidently, $I_g^-$ is a principal ideal generated by a degree $g$ polynomial in $\alpha$, and thus $\deg I_g^- =g$. From the above exact sequence we obtain:\\

\begin{equation}
    \deg J_g^-  \; - \; \deg J_{g-2}^-  \; = \; g, \qquad g\geqslant 2 \text{ and even}.\label{eq:minusevendegree}
\end{equation}
\vspace{.10cm}

\noindent We also have $\deg J_0^-=0$. Now we turn to the case in which $g$ is odd. Computations for low values of $g$ suggest that $J_g^-=J_{g-1}^-$ so that the odd case is covered by the even case. This amounts to showing $\zeta_{g-1}^-\in J_g^-$. Proving this relation, however, is not quite as straightforward as was Lemma \ref{lemma:minuseven}. Our method is similar to the proof of Lemma \ref{lemma:above} in which we prove two statements that help carry through an induction scheme. Here, however, we will need to take inverses of certain elements modulo $J_g^-$ for the argument to work. We thus first describe a way to test the invertibility of elements mod $J_g^-$. For this we use the following observation of Mu\~{n}oz:

\vspace{.45cm}

\begin{lemma}[{\cite[Prop. 20]{munoz-ring}}]\label{lemma:munoz}
    The simultaneous triples of eigenvalues of the commuting endomorphisms of multiplication by $\alpha, \beta,\gamma$ on the ring $\C[\alpha,\beta,\gamma]/J_g$ are given by\\
    
    \[
        (0,8,0),\;\; (\pm 4, -8, 0),\;\; (\pm 8\sqrt{-1}, 8,0), \;\; \ldots, \;\; (\pm 4(g-1)\sqrt{-1}^{g}, (-1)^{g-1} 8, 0). 
    \]
\end{lemma}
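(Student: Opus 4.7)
Since $\C[\alpha,\beta,\gamma]/J_g$ is a finite-dimensional commutative $\C$-algebra on which $\alpha, \beta, \gamma$ act as commuting multiplication endomorphisms, the simultaneous eigenvalue triples correspond bijectively to the closed points of the affine variety $V(J_g) \subset \C^3$. The plan is to determine $V(J_g)$ by running two successive backward-induction arguments on the defining recursion (\ref{eq:rels}): first to force $\gamma = 0$ at each such point, then to force $\beta \in \{\pm 8\}$. The final step is a direct factorization yielding the allowed values of $\alpha$.

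For the first step, fix $(a,b,c) \in V(J_g)$, so $\zeta_g = \zeta_{g+1} = \zeta_{g+2} = 0$ at $(a,b,c)$. The $k=g+1$ instance of (\ref{eq:rels}) yields $0 = 2(g+1)g \cdot c \cdot \zeta_{g-1}(a,b,c)$, so either $c=0$ or $\zeta_{g-1}(a,b,c)=0$. In the latter case, the recursion at successively smaller indices forces $\zeta_{g-2}(a,b,c) = \zeta_{g-3}(a,b,c) = \cdots = 0$: whenever three consecutive $\zeta_k$'s vanish and the combinatorial coefficient $2k(k-1)c$ is nonzero, the next smaller $\zeta$ is killed. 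Provided $c \neq 0$, the chain runs all the way to $\zeta_0 = 1 = 0$, a contradiction; hence $c=0$.

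Setting $c = 0$, write $P_k(a,b) := \zeta_k(a,b,0)$. These polynomials satisfy $P_{k+1} = aP_k + e_k P_{k-1}$ with $e_k := k^2(b + (-1)^k 8)$, and the equation $P_{g+2} = 0$ becomes automatic once $P_g = P_{g+1} = 0$. Rewriting the recursion as $e_k P_{k-1} = P_{k+1} - a P_k$, I would argue that whenever $e_k \neq 0$ and $P_k = P_{k+1} = 0$, one has $P_{k-1} = 0$. Iterating from $k = g$ downward, we reach $P_0 = 1 = 0$ unless some $e_k$ vanishes along the way. Since $e_k = 0$ for $k \geq 1$ requires $b = (-1)^{k+1} 8$, this forces $b \in \{\pm 8\}$.

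It remains to enumerate the common zeros of $P_g$ and $P_{g+1}$ at $b = \pm 8$. At $b = 8$ the recursion degenerates: $P_{k+1}(a,8) = aP_k(a,8)$ when $k$ is odd, and $P_{k+1}(a,8) = aP_k(a,8) + 16k^2 P_{k-1}(a,8)$ when $k$ is even. A routine induction on $m$ yields the closed forms $P_{2m+1}(a,8) = a \prod_{j=1}^{m}(a^2 + 64 j^2)$ and $P_{2m}(a,8) = a \cdot P_{2m-1}(a,8)$, with analogous formulas at $b = -8$ producing factors of the form $a^2 - 16(2j-1)^2$. Extracting the common roots in each of the four cases (two values of $b$, two parities of $g$) and matching with the list $(\pm 4(k-1)\sqrt{-1}^k,\,(-1)^{k-1} 8,\,0)$ for $k = 1, \ldots, g$ completes the argument. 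The main obstacle is the bookkeeping in these factorizations: the parity of $k$ and the sign of $b$ interact to swap the roles of even and odd indices between the two cases, and one must verify that the two families at $b = 8$ and $b = -8$ together contribute exactly $2g-1$ distinct triples.
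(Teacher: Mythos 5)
The paper does not prove this lemma; it imports it as a citation to Mu\~{n}oz \cite[Prop.\ 20]{munoz-ring} with no accompanying proof environment, so there is no in-paper argument to compare yours against. That said, your proof is correct. Identifying the simultaneous eigenvalue triples with the points of $V(J_g)\subset\C^3$ is legitimate because $\C[\alpha,\beta,\gamma]/J_g$ is a finite-dimensional commutative $\C$-algebra, so its characters are exactly the simultaneous eigenvalues of the multiplication operators. Your first descending pass through (\ref{eq:rels}) does force $\gamma=0$: the coefficient $2k(k-1)$ is nonzero for every $k$ from $g+1$ down to $2$, and the last step ($k=2$) produces the contradiction $\zeta_0=1=0$. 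One step worth making explicit is that once $\gamma=0$, the vanishing of $\zeta_{g+2}$ is automatic from $\zeta_g=\zeta_{g+1}=0$, which is what lets you work with the common roots of just $P_g$ and $P_{g+1}$; your text asserts this but does not say why (it is because the $\gamma\zeta_{g-1}$ term drops out of the $k=g+1$ recursion). The second pass forces $b\in\{\pm8\}$ for $g\geqslant 2$; for $g=1$ the single step $k=1$ forces $b=8$, consistent with the list. I checked the factorizations $P_{2m+1}(a,8)=a\prod_{j=1}^m(a^2+64j^2)$, $P_{2m}(a,8)=aP_{2m-1}(a,8)$, $P_{2m}(a,-8)=\prod_{j=1}^m(a^2-16(2j-1)^2)$, $P_{2m+1}(a,-8)=aP_{2m}(a,-8)$, and the $\gcd$ of $P_g,P_{g+1}$ in each parity/sign case gives exactly the $2g-1$ triples $(\pm 4(k-1)\sqrt{-1}^k,(-1)^{k-1}8,0)$, $k=1,\ldots,g$. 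The bookkeeping you flagged as the main obstacle goes through; there is no gap.
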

\vspace{.65cm}

\noindent In particular, as $\C[\alpha,\gamma]/J_g^-$ is identified with the subspace of $\C[\alpha,\beta,\gamma]/J_g$ for which $\beta$ acts as $-8$, multiplication by $\alpha$ on the former ring has only non-zero eigenvalues, and is thus invertible. To say something more general, suppose $g$ is odd, and for $j=1,\ldots, (g-1)/2$ consider the evaluation maps:

\vspace{.25cm}
\begin{equation*}
\begin{tikzcd}[column sep=large]
    \text{ev}^\pm_{j}: \C[\alpha,\gamma]/J_g^- \arrow{r} & \C, &  \qquad \text{ev}^\pm_j(f) = f\left(\pm 4(g-2j),\;0\right).
\end{tikzcd}
\end{equation*}
\vspace{.20cm}

\noindent These are well-defined homomorphisms because $\text{ev}_j^\pm(\zeta_g^-)=\text{ev}^\pm_j(\zeta_{g+1}^-)=\text{ev}_j^\pm(\zeta_{g+2}^-)=0$ for $j$ as above, as is easily verified through the recursion relations for $\zeta_g^-$. Our invertibility test is:

\vspace{.55cm}

\begin{lemma}\label{lemma:invertible}
    If $g$ is odd, $u\in \C[\alpha,\gamma]$ is a unit mod $J_g^-$ if and only if $\text{{\emph{ev}}}_j^\pm(u)\neq 0$ for $j=1,\ldots, (g-1)/2$.
\end{lemma}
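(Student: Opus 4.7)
The plan is to use standard finite-dimensional commutative algebra. The ring $R := \C[\alpha,\gamma]/J_g^-$ is finite-dimensional over $\C$, hence Artinian, so it decomposes as a finite product of local Artinian $\C$-algebras, one for each maximal ideal $\mathfrak{m} \subset R$, with residue field $R/\mathfrak{m} \cong \C$. In such a decomposition, $u \in R$ is a unit if and only if its image in $R/\mathfrak{m}$ is nonzero for every maximal ideal $\mathfrak{m}$. The lemma therefore reduces to identifying the maximal ideals of $R$ and matching them to the homomorphisms $\text{ev}_j^\pm$.

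First I would determine the maximal ideals of $R$. By Hilbert's Nullstellensatz they correspond bijectively to the common vanishing locus of $J_g^-$ in $\A^2_\C$. Since $J_g^-$ is by construction the image of $J_g$ under $\beta \mapsto -8$, the common zeros of $J_g^-$ are precisely the pairs $(\alpha_0,\gamma_0)$ such that $(\alpha_0, -8, \gamma_0)$ is a common zero of $J_g$ in $\A^3_\C$. By Lemma \ref{lemma:munoz}, the common zero set of $J_g$ is exactly the set of simultaneous eigenvalue triples of multiplication by $(\alpha,\beta,\gamma)$ on $\C[\alpha,\beta,\gamma]/J_g$. The $k$-th triple has $\beta$-coordinate $(-1)^k 8$, so the triples with $\beta = -8$ occur at odd $k$. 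For such $k$, the exponent $k+1$ is even, making $\sqrt{-1}^{k+1}$ real, and the $\alpha$-coordinate ranges over $\pm 4k$ as $k$ runs through $1, 3, \ldots, g-2$. Reindexing via $k = g-2j$ for $j=1,\ldots,(g-1)/2$, these are exactly the $g-1$ points $(\pm 4(g-2j), 0)$.

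Next, each such point $(\alpha_0, \gamma_0)$ gives rise to a $\C$-algebra map $R \to \C$ by evaluation, which is exactly $\text{ev}_j^\pm$. Well-definedness on the quotient is the observation already recorded in the setup preceding the lemma, namely that $\text{ev}_j^\pm$ annihilates $\zeta_g^-$, $\zeta_{g+1}^-$, and $\zeta_{g+2}^-$. The kernel of $\text{ev}_j^\pm$ is then the maximal ideal of $R$ at $(\pm 4(g-2j), 0)$, and by the preceding paragraph these exhaust the maximal spectrum of $R$. Combining the three steps, $u \in R$ is a unit if and only if $\text{ev}_j^\pm(u) \neq 0$ for every $j = 1, \ldots, (g-1)/2$ and every choice of sign.

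The only mildly delicate step is the enumeration of the $\beta = -8$ eigenvalue triples in Lemma \ref{lemma:munoz}; once one sees that for $g$ odd they match the listed evaluation points exactly, the remainder is a routine application of the structure theorem for finite-dimensional commutative algebras together with the Nullstellensatz.
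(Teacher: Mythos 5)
Your proof is correct. Both your argument and the paper's rest on the same two pillars: Lemma~\ref{lemma:munoz}'s list of simultaneous eigenvalue triples, and the principle that in a finite-dimensional commutative $\C$-algebra an element is a unit if and only if it ``lives above'' a nonzero value at every point of the spectrum. The difference is in how that principle is expressed. The paper phrases it purely in linear-algebraic terms: $u$ is multiplication by a polynomial in the commuting operators $\alpha$ and $\gamma$, so its eigenvalues are the values of $u$ at the simultaneous eigenvalue pairs, and invertibility is non-vanishing of eigenvalues. You instead pass through commutative algebra: $R = \C[\alpha,\gamma]/J_g^-$ is Artinian, decomposes as a product of local rings with residue field $\C$, being a unit is checked residue field by residue field, and the maximal spectrum is identified with $V(J_g^-) = V(J_g)\cap\{\beta=-8\}$ via the Nullstellensatz. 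The two are logically equivalent for finite-dimensional algebras over an algebraically closed field --- the simultaneous spectral decomposition of commuting finite-rank operators \emph{is} the Artinian decomposition --- but yours invokes heavier standard machinery while the paper's is more elementary and self-contained. One small presentational point: the assertion that $V(J_g)$ coincides with the set of simultaneous eigenvalue triples is a general fact about finite-dimensional quotients of polynomial rings, not something read off directly from the statement of Lemma~\ref{lemma:munoz}; it is worth flagging that you are invoking this background fact rather than the lemma itself for that step.
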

\vspace{.40cm}

\begin{proof}
Write $\alpha$ and $\gamma$ for the linear endomorphisms of $\C[\alpha,\gamma]/J_g^-$ defined via multiplication by $\alpha$ and $\gamma$, respectively. These commute and have simultaneous eigenvalues $(\pm 4(g-2j),0)$ where $j=1,\ldots, (g-1)/2$ by Lemma \ref{lemma:munoz}. The element $u$ may be viewed as a linear endomorphism which is a polynomial in the endomorphisms $\alpha$ and $\gamma$. Basic linear algebra says that the eigenvalues of $u$ are given by $u$ evaluated at these pairs of eigenvalues. The condition that $\text{ev}_j^\pm(u)\neq 0$ for the above values of $j$ can now be understood as the non-vanishing of all eigenvalues of $u$.
\end{proof}

\vspace{.45cm}

\noindent A similar test works for $g$ even. With these preliminary remarks out of the way, we now prove a lemma for when $g$ is odd which will easily imply the above claim that $J_g^-=J_{g-1}^-$. We remind the reader that we use the notation $a \equiv_g b$ to mean that $a$ and $b$ are equivalent modulo $J_g^-$.\\

\vspace{.35cm}

\begin{lemma}\label{lemma:minusodd}
For $g\geqslant 1$ odd and $i\geqslant 0$ we have the following:
\begin{itemize}
\item[(i)] Modulo $J_g^-$ there exists a unit $u_{g,i}$ such that $u_{g,i} \gamma^{i}\zeta^-_{g-2i-1} \equiv_g \gamma^{i+1}\zeta^-_{g-2i-2}$.
\item[(ii)] Modulo $J_g^-$ there exists a unit $v_{g,i}$ such that $v_{g,i}\gamma^i \zeta^-_{g-2i-1} \equiv_g  \gamma^{i+1}\zeta^-_{g-2i-3}$.
\end{itemize}
\end{lemma}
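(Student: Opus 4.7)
The plan is to prove (i) and (ii) simultaneously by induction on $i$ for fixed odd $g\geqslant 3$, with the case $g=1$ trivial since the generators $\zeta_1^-=\alpha$ and $\zeta_2^-=\alpha^2-16$ force $J_1^-=\C[\alpha,\gamma]$. For brevity write $a_j:=\gamma^j\zeta^-_{g-2j-1}$ and $b_j:=\gamma^j\zeta^-_{g-2j}$, so that (i) and (ii) assert the existence of units $u_{g,i},v_{g,i}$ with $u_{g,i}\,a_i\equiv_g b_{i+1}$ and $v_{g,i}\,a_i\equiv_g a_{i+1}$ respectively.

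For the base case $i=0$, the recursion (\ref{eq:mo}) at $k=g$ reads $\zeta^-_{g+1}=\alpha\zeta^-_g-16g^2\zeta^-_{g-1}+2g(g-1)\gamma\zeta^-_{g-2}$; using $\zeta^-_g,\zeta^-_{g+1}\in J_g^-$ and rearranging yields (i) with $u_{g,0}=8g/(g-1)$. The recursion (\ref{eq:me}) at $k=g-1$ reads $\zeta^-_g=\alpha\zeta^-_{g-1}+2(g-1)(g-2)\gamma\zeta^-_{g-3}$ and yields (ii) with $v_{g,0}=-\alpha/[2(g-1)(g-2)]$, which is a unit since $\alpha$ is invertible modulo $J_g^-$ by Lemma \ref{lemma:invertible} (all its eigenvalues $\pm 4(g-2j)$ are nonzero for $j=1,\ldots,(g-1)/2$). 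For the inductive step, the hypothesis at $(g,i-1)$ provides $b_i\equiv_g w\,a_i$ and $a_{i-1}\equiv_g v_{g,i-1}^{-1}a_i$ with the unit $w:=u_{g,i-1}/v_{g,i-1}$. The $\gamma^i$-multiple of (\ref{eq:me}) at $k=g-2i-1$ is the identity $b_i=\alpha a_i+2(g-2i-1)(g-2i-2)a_{i+1}$, whose substitution produces (ii) at $(g,i)$ with $v_{g,i}=(w-\alpha)/[2(g-2i-1)(g-2i-2)]$; the $\gamma^i$-multiple of (\ref{eq:mo}) at $k=g-2i$ reads $\gamma a_{i-1}=\alpha b_i-16(g-2i)^2 a_i+2(g-2i)(g-2i-1)b_{i+1}$, and substituting gives (i) at $(g,i)$ with $u_{g,i}=[\gamma v_{g,i-1}^{-1}+16(g-2i)^2-\alpha w]/[2(g-2i)(g-2i-1)]$.

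The main obstacle is verifying that these displayed numerators remain units modulo $J_g^-$. By Lemma \ref{lemma:invertible} this reduces to checking that $w-\alpha$ and $\gamma v_{g,i-1}^{-1}+16(g-2i)^2-\alpha w$ are nonzero at each eigenvalue pair $(\alpha,\gamma)=(\pm 4(g-2j),0)$ for $j=1,\ldots,(g-1)/2$; since $\gamma$ evaluates to zero at these points, the $\gamma$-terms drop out and the checks become explicit inequalities in the evaluations of $w$, which propagate through the induction from the starting value $w|_{i=1}=-16g(g-2)/\alpha$. The boundary case $i=(g-1)/2$, where the denominators vanish and both parts of the lemma collapse to the statement $\gamma^{(g-1)/2}\in J_g^-$, is treated as the terminal step: comparing (i) and (ii) at $(g,(g-3)/2)$ yields $(w-\alpha)\gamma^{(g-1)/2}\equiv_g 0$ for the appropriate $w$, and the same unit check that powers the inductive step then forces $\gamma^{(g-1)/2}\in J_g^-$, completing the induction.
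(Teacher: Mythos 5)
Your overall architecture is the same as the paper's: induction on $i$ with $g$ fixed, deriving (i) and (ii) simultaneously from the two "odd" recursion laws (\ref{eq:mo}) and (\ref{eq:me}), packaging the coefficient $w = u_{g,i-1}/v_{g,i-1}$, and invoking Lemma~\ref{lemma:invertible} at the eigenvalue pairs $(\pm 4(g-2j),0)$. Your base-case units $u_{g,0}=8g/(g-1)$ and $v_{g,0}=-\alpha/[2(g-1)(g-2)]$ and your recursion $v_{g,i}=(w-\alpha)/[2(g-2i-1)(g-2i-2)]$ agree with (\ref{eq:vinverse}) up to an index shift. One cosmetic difference: instead of observing that the $\gamma a_{i-1}$ contribution lies in $\gamma J^-_{g-1}\subset J_g^-$ by Lemma~\ref{lemma:minuseven} — so that it can simply be dropped — you substitute for it using (ii) at $i-1$, which is why your $u_{g,i}$ carries an extra $\gamma v_{g,i-1}^{-1}$ term absent from (\ref{eq:uinverse}). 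That term is harmless because the evaluations are at $\gamma=0$, but the paper's route is cleaner and relies on the inclusion $\gamma J^-_{g-1}\subset J_g^-$, which is worth making explicit.

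The genuine gap is in the invertibility argument, which is the actual content of the induction. You correctly identify what must be checked — nonvanishing of $w-\alpha$ and of $16(g-2i)^2-\alpha w$ at $(\pm 4(g-2j),0)$ — and you note that the $\gamma$-terms drop out and that the data $w$ is what propagates. But you never state the propagating sign condition, nor verify that it is preserved. The paper's proof strengthens the inductive hypothesis by the explicit inequality $\pm\,\mathrm{ev}_j^\pm\!\left(u_k v_k^{-1}\right) < 0$ for all admissible $j$, and then checks via (\ref{eq:signs}) that the recursion formulas (\ref{eq:uinverse})--(\ref{eq:vinverse}) carry this from step $k=i$ to step $k=i+1$; this is where the denominators' positivity (for $g-2i-3>0$, etc.) and the signs of the two summands in each numerator enter and must be inspected. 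Writing "explicit inequalities $\ldots$ which propagate" asserts the conclusion of this verification without performing it. To complete the proof you need to (a) state the invariant $\pm\,\mathrm{ev}_j^\pm(w)<0$, (b) confirm it for $w|_{i=1}=-16g(g-2)/\alpha$, and (c) show by sign-chasing in the displayed numerators and denominators that it is preserved, including at the edge index where the formula for $v_{g,i}$ degenerates. Your treatment of the terminal case $i=(g-1)/2$ is fine in outline, but it too leans on "the same unit check," so it inherits the same incompleteness.
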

\vspace{.30cm}

\begin{proof} We prove (i) and (ii) simultaneously by induction on $i$, fixing $g$. For this reason we simply write $u_i$ and $v_i$ for $u_{g,i}$ and $v_{g,i}$, respectively. We assume the statements hold at $i$ and will prove them at $i+1$. First, substitute $k=g-2i-2$ in (\ref{eq:mo}) and multiply by $\gamma^{i+1}$ to obtain

\vspace{.18cm}
\begin{equation*}
    \alpha\gamma^{i+1}\zeta^-_{g-2i-2}\;\;-\;\;16(g-2i-2)^2\gamma^{i+1}\zeta^-_{g-2i-3}\;\; + \;\; c_1\gamma^{i+2}\zeta^-_{g-2i-4} \;\;=\;\; \gamma^{i+1}\zeta^-_{g-2i-1}
\end{equation*}
\vspace{.02cm}

\noindent in which $c_1 = 2(g-2i-2)(g-2i-3)$. The term on the right side is in $\gamma J_{g-1}^-$ by Lemma \ref{lemma:minuseven} since $g-1$ is even, and $\gamma J_{g-1}^- \subset J_g^-$ implies it is a member of $J_g^-$. Thus 

\vspace{.18cm}
\begin{equation*}
    16(g-2i-2)^2\gamma^{i+1}\zeta^-_{g-2i-3}\;\; \equiv_g \;\; c_1\gamma^{i+2}\zeta^-_{g-2i-4} \;\; + \;\; \alpha\gamma^{i+1}\zeta^-_{g-2i-2}.
\end{equation*}
\vspace{.02cm}

\noindent By the inductive assumption of (i) at $i$, the final term is equivalent to $\alpha u_i \gamma^{i}\zeta^-_{g-2i-1}$. This is in turn equivalent to $\alpha u_i v_i^{-1} \gamma^{i+1}\zeta^-_{g-2i-3}$ by (ii) at $i$. With some minor rearranging this establishes (i) at $i+1$ if we can show that the following element is invertible modulo $J_g^-$:

\vspace{.22cm}
\begin{equation}
    u_{i+1} \; = \; \frac{16(g-2i-2)^2 \; - \; \alpha u_i v_i^{-1} }{2(g-2i-2)(g-2i-3)}, \qquad \quad u_0 \; = \; \frac{8g}{g-1}. \label{eq:uinverse}
\end{equation}
\vspace{.15cm}

\noindent We have included $u_0$ above, which proves the base case of (i) at $i=0$. The lemma holds for $g=1$ by direct inspection, so we may assume $g\geqslant 3$ to justify the denominator in $u_0$. Also, we use the above expression for $u_{i+1}$ only when $g-2i-3>0$, in order to justify the denominator in (\ref{eq:uinverse}). When $g-2i-3=0$, we omit the term $(g-2i-3)$ from the denominator of $u_{i+1}$. This does not affect the form of (i), because for this index the right side of (i) is zero, since $\zeta_{-1}^-=0$. Further, when $g-2i-3<0$, statement (i) is vacuously true since both sides are zero. Setting the invertibility of (\ref{eq:uinverse}) aside for a moment, we consider proving (ii) at $i+1$. Set $k=g-2i-3$ in (\ref{eq:me}) and multiply by $\gamma^{i+1}$ to obtain

\vspace{.18cm}
\begin{equation*}
    \gamma^{i+1}\zeta^-_{g-2i-2}\;\; = \;\;\alpha\gamma^{i+1}\zeta^-_{g-2i-3} \;\;+\;\; c_2\gamma^{i+2}\zeta^-_{g-2i-5}
\end{equation*}
\vspace{.02cm}

\noindent in which $c_2 = 2(g-2i-3)(g-2i-4)$. Again, we use the inductive assumption for (i) and (ii) at $i$ to replace the left hand term with $u_i v_i^{-1} \gamma^{i+1} \zeta^-_{g-2i-3}$. So after rearranging, (ii) at step $i+1$ is done if we can show that the following is invertible modulo $J_g^-$:

\vspace{.22cm}
\begin{equation}
    v_{i+1} \; = \; \frac{u_i v_i^{-1} \; - \; \alpha }{2(g-2i-3)(g-2i-4)}, \qquad \quad v_0 \; = \; \frac{-\alpha}{2(g-1)(g-2)}. \label{eq:vinverse}
\end{equation}
\vspace{.15cm}

\noindent Again, we have included $v_0$ to prove the base case of (ii) at $i=0$. We use the above expression for $v_{i+1}$ when $g-2i-5\geqslant 0$. Note that when $g-2i-5=0$, the right side of (ii) is zero, and when $g-2i-5<0$, (ii) is vacuously true. We are now left with showing that $u_{i+1}$ and $v_{i+1}$ are invertible. For this we strengthen the inductive hypothesis. We begin by making the following observation:

\vspace{.22cm}
\begin{equation*}
    \pm \text{ev}_j^\pm\left(u_0v_0^{-1}\right) \; < \; 0, \qquad \;\; j=1,\ldots, (g-1)/2.
\end{equation*}
\vspace{.12cm}

\noindent We add on to our inductive hypothesis the assumption that $\pm \text{ev}_j^\pm\left(u_kv_k^{-1}\right) < 0$ for $0\leqslant k \leqslant i$ and the above values of $j$. We remark that these expressions evaluated at rational numbers clearly have rational values. With this added hypothesis at $k=i$ it is easy to see from the expressions (\ref{eq:uinverse}) and (\ref{eq:vinverse}) that $\text{ev}_j^\pm\left(u_{i+1}\right) \neq  0$ and $\text{ev}_j^\pm\left(v_{i+1}\right) \neq  0$ for the above $j$. More specifically, we have:

\vspace{.22cm}
\begin{equation}
    \text{ev}_j^\pm\left(u_{i+1}\right) \; > \; 0, \qquad \pm\text{ev}_j^\pm\left(v_{i+1}\right) \; < \; 0. \label{eq:signs}
\end{equation}
\vspace{.02cm}

\noindent For example, in the expression for $u_{i+1}$ evaluated at $(\pm 4 (g-2j),0)$, the numerator is a sum of two positive rational numbers, and the denominator is a positive integer. The inequality for $v_{i+1}$ is deduced similarly. By Lemma \ref{lemma:invertible}, these non-vanishing values imply that $u_{i+1}$ and $v_{i+1}$ are invertible mod $J_g^-$. Further, (\ref{eq:signs}) implies $\pm \text{ev}_j^\pm\left(u_{i+1}v_{i+1}^{-1}\right) < 0$ for the requisite values of $j$, which carries through our added hypothesis to the next step at $k=i+1$, completing the proof. 
\end{proof}

\vspace{.75cm}

\noindent When $i \gg 0$ in the above lemma both sides of (i) and (ii) are zero. Inductively, we obtain that all terms on both sides of (i) and (ii) for $i\geqslant 0$ are zero modulo $J_g^-$. In particular, both of (i) and (ii) at $i=0$ yield the equivalence $\zeta_{g-1}^- \equiv_g 0$, so that $\zeta_{g-1}^-\in J_g^-$. We have established:\\

\vspace{.15cm}

\begin{corollary}
For $g\geqslant 1$ odd, $J_g^- = J_{g-1}^-$.
\end{corollary}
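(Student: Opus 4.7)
The plan is to derive the corollary directly from Lemma \ref{lemma:minusodd}. Since $J_g \subset J_{g-1}$ is a basic property of the $J_g$ recursion (noted at the start of Section \ref{sec:framed}), setting $\beta = -8$ descends this to $J_g^- \subseteq J_{g-1}^-$. For the reverse inclusion, note that $J_{g-1}^- = (\zeta_{g-1}^-, \zeta_g^-, \zeta_{g+1}^-)$, and the last two generators already lie in $J_g^-$ by definition. So the whole content of the corollary is to show $\zeta_{g-1}^- \in J_g^-$.

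To do this, introduce the abbreviation $A_i = \gamma^i \zeta^-_{g-2i-1}$. Part (ii) of Lemma \ref{lemma:minusodd} may be rewritten as $v_{g,i} A_i \equiv_g A_{i+1}$, and since $v_{g,i}$ is a unit modulo $J_g^-$, this inverts to
\[
    A_i \;\equiv_g\; v_{g,i}^{-1} A_{i+1}.
\]
For $i \geqslant (g+1)/2$ the index $g-2i-1$ is negative, so by convention $\zeta^-_{g-2i-1}=0$ and hence $A_i = 0$. A downward induction on $i$ using the displayed relation then yields $A_i \equiv_g 0$ for every $i \geqslant 0$. Specializing to $i=0$ gives $\zeta^-_{g-1} = A_0 \equiv_g 0$, which is exactly the desired membership $\zeta^-_{g-1} \in J_g^-$. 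Combined with the reverse inclusion noted above, this proves $J_g^- = J_{g-1}^-$.

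I do not anticipate any real obstacle: all the serious work is already encoded in Lemma \ref{lemma:minusodd}, particularly the invertibility of the elements $v_{g,i}$ modulo $J_g^-$ established via Lemma \ref{lemma:invertible} and the eigenvalue analysis of Mu\~{n}oz. The only subtlety worth underlining is that part (ii) of Lemma \ref{lemma:minusodd}, rather than part (i), is what drives the argument: it is (ii) that produces a recursion purely among the \emph{odd-index} terms $A_i$, allowing the trivial vanishing at large $i$ to propagate all the way down to $i=0$. Part (i) provides the complementary statement that $\gamma \zeta_{g-2}^- \in J_g^-$, which the paragraph preceding the corollary alludes to when it remarks that both (i) and (ii) at $i=0$ yield the conclusion.
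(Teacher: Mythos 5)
Your proof is correct and takes the same approach as the paper: a downward induction over $i$, driven by the invertibility of the units $v_{g,i}$ supplied by Lemma \ref{lemma:minusodd}, starting from the trivial vanishing $A_i = 0$ for $i \geqslant (g+1)/2$. The paper's phrasing is terser (``Inductively, we obtain that all terms on both sides of (i) and (ii) \ldots are zero modulo $J_g^-$''), but your identification of part (ii) as the index-lowering engine is precisely what it is doing.
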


\vspace{.55cm}

\noindent With the relation (\ref{eq:minusevendegree}) and $\deg J^-_0 =0$, this completes the computation of $\deg J_g^-$ for all $g\geqslant 0$:

\vspace{.15cm}

\[
    \deg J_g^- \; = \; \deg J_{g+1}^- \; = \; \frac{g(g+2)}{4}, \qquad g\geqslant 0\; \text{    even.}
\]

\vspace{.25cm}

\noindent To compute $P_t(J^-_g)$ we only need to understand how the degree of $J_g^-$, which is the dimension of $\C[\alpha,\gamma]/J_g^-$, is distributed amongst the four homogeneously $\Z/4$-graded summands. This can be understood by computing the initial ideal of $J_g^-$ with respect to some monomial ordering. In fact, it is rather straightforward from our above analysis to write down a Gr\"{o}bner basis for $J_g^-$.\\

\begin{prop}\label{prop:minusgrobner}
Let $g\geqslant 0$ be even. Under the lexicographical monomial ordering with $\alpha >\gamma$, the following set is a Gr\"{o}bner basis for the ideal $J^-_g = J^-_{g+1}$:

\vspace{.20cm}
\begin{equation}
    \left\{\zeta^-_g, \;\; \gamma\zeta^-_{g-2}, \;\; \gamma^2\zeta^-_{g-4}, \;\; \ldots, \;\; \gamma^{g/2-1}\zeta^-_{2}, \;\; \gamma^{g/2}\right\}.\label{eq:minusgrobner}
\end{equation}
\vspace{.10cm}

\noindent Consequently, the initial ideal of $J_g^-$ is generated by the monomials $\gamma^i \alpha^{g-2i}$ with $0 \leqslant i \leqslant g/2$, and thus a vector space basis for $\C[\alpha,\gamma]/J_g^-$ is represented by the following monomials:

\vspace{.40cm}

\begin{center}
\includegraphics{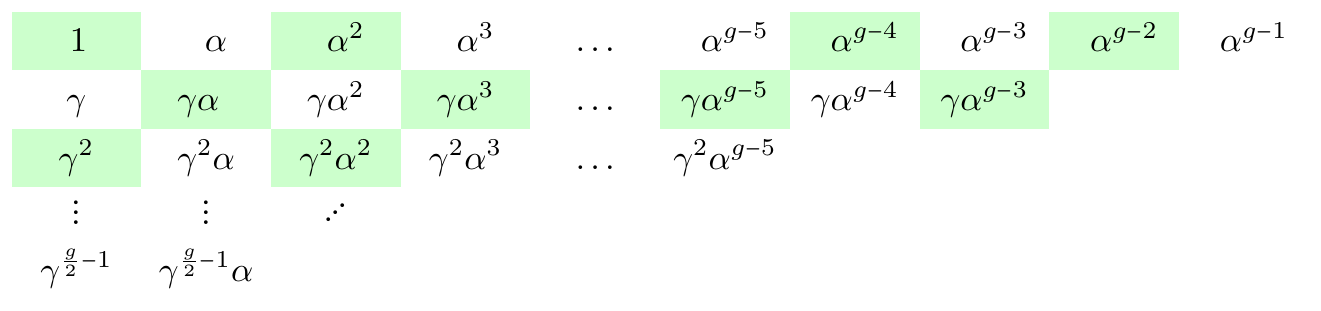}
\end{center}
\end{prop}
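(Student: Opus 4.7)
The plan is to verify the Gr\"{o}bner basis claim by combining the generating property from Lemma~\ref{lemma:minuseven} with a dimension count against $\deg J_g^- = g(g+2)/4$. First I would confirm that the set~(\ref{eq:minusgrobner}) generates $J_g^-$ by induction on the even genus $g$. The base case $g=0$ is immediate since $\zeta_0^- = 1$ and $J_0^- = (1)$. For the inductive step, Lemma~\ref{lemma:minuseven} writes $J_g^- = (\zeta_g^-) + \gamma J_{g-2}^-$, and multiplying the inductive generating set for $J_{g-2}^-$ by $\gamma$ yields the elements $\gamma^{j}\zeta_{g-2j}^-$ for $j=1,\ldots,g/2$, which together with $\zeta_g^-$ recover the full list in~(\ref{eq:minusgrobner}).

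Second, I would read off the leading monomials under the lex ordering with $\alpha > \gamma$. The recursions~(\ref{eq:me}) and~(\ref{eq:mo}) imply by induction on $k$ that $\zeta_k^-$ has a unique monomial of highest $\alpha$-degree, namely $\alpha^k$, which is the content of~(\ref{eq:degalpha}). Since the lex order with $\alpha>\gamma$ ranks monomials first by $\alpha$-degree, this is the leading term of $\zeta_k^-$, and consequently $\text{LT}(\gamma^i\zeta_{g-2i}^-) = \gamma^i\alpha^{g-2i}$ for each $0 \leqslant i \leqslant g/2$.

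Third, I would invoke the standard dimension criterion: a generating set $G$ of an ideal $J$ is a Gr\"{o}bner basis for $J$ if and only if $\dim_\C \C[\alpha,\gamma]/\langle\text{LT}(G)\rangle = \dim_\C \C[\alpha,\gamma]/J$. The candidate initial ideal $L = \langle \gamma^i \alpha^{g-2i} : 0 \leqslant i \leqslant g/2\rangle$ has as monomial complement those $\alpha^a\gamma^c$ with $a < g$ and $c < \lceil (g-a)/2 \rceil$. Splitting by the parity of $a$ and using that $g$ is even, a direct count gives $2 \sum_{k=0}^{g/2-1}(g/2 - k) = g(g+2)/4$, which matches $\deg J_g^-$ from the recursion~(\ref{eq:minusevendegree}). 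This establishes that~(\ref{eq:minusgrobner}) is a Gr\"{o}bner basis, after which the stated description of the initial ideal and of the staircase monomial basis for $\C[\alpha,\gamma]/J_g^-$ follow from standard Gr\"{o}bner theory.

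The only step requiring real attention is the monomial count in the third paragraph; the remaining work is routine once the generating property from Lemma~\ref{lemma:minuseven} is in hand and the leading-term identification for $\zeta_k^-$ is extracted from~(\ref{eq:degalpha}).
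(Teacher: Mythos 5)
Your proposal is correct and follows essentially the same route as the paper's proof: identify the elements of~(\ref{eq:minusgrobner}) as members of $J_g^-$ via Lemma~\ref{lemma:minuseven}, read off leading terms $\gamma^i\alpha^{g-2i}$ from~(\ref{eq:degalpha}), and then invoke the standard degree/dimension criterion against $\deg J_g^- = g(g+2)/4$. The only cosmetic difference is that you prove the set generates $J_g^-$ outright before applying the criterion, where the paper notes only containment (which suffices, since equality of dimensions then forces the leading-term ideal to coincide with the full initial ideal); your explicit staircase count in the third paragraph is also correct.
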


\vspace{.05cm}

\begin{proof}
    The elements in (\ref{eq:minusgrobner}) are contained in $J_g^-$ by Lemma \ref{lemma:minuseven}. From (\ref{eq:degalpha}) we find that the initial term of $\gamma^i\zeta_{g-2i}^-$ is the monomial $\gamma^i\alpha^{g-2i}$. It is straightforward to verify that the degree of the initial ideal generated by these monomials agrees with the degree of $J_g^-$ computed above, and thus by a standard result in the theory of Gr\"{o}bner bases, the polynomials $\gamma^i\zeta_{g-2i}^-$ form a Gr\"{o}bner basis, as claimed. It is also a standard result that the monomials not contained in the initial ideal form a vector space basis for the quotient, yielding the final statement. The requisite background for Gr\"{o}bner bases may be found, for example, in \cite[Ch. 2]{monomial}.
\end{proof}

\vspace{.60cm}

\noindent In the above grid of monomials, we have shaded boxes behind monomials with grading $0$ (mod 4). Recalling $\alpha$ and $\gamma$ have grading $2$ (mod 4), all other monomials have grading $2$ (mod 4). This is with the exception of the bottom two monomials: if $g/2$ is odd (resp. even) the monomial $\gamma^{g/2-1}$ (resp. $\gamma^{g/2-1}\alpha$) should be included in this shading. It is easily verified that the number of shaded boxes is always equal to the number of unshaded boxes. We conclude:\\

\begin{corollary}\label{cor:jminus}
For $g\geqslant 0$, the $\Z/4$-graded Poincar\'{e} polynomial for the ideal $J_g^-$ is given by

\vspace{.25cm}
\[  
    P_t(J_g^-)\; =\;  \textstyle{\frac{1}{2}}\lfloor \textstyle{\frac{1}{2}}g \rfloor\left(\lfloor \textstyle{\frac{1}{2}}g \rfloor + 1\right)\cdot \left(1+ t^2\right). 
\]
\end{corollary}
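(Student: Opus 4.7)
The plan is to deduce the corollary directly from the Gröbner basis of Proposition \ref{prop:minusgrobner} by an elementary $\Z/4$-grading count, after first reducing the odd case to the even case. For $g$ odd, the corollary stated just before Proposition \ref{prop:minusgrobner} gives $J_g^- = J_{g-1}^-$, and since $\lfloor g/2\rfloor = \lfloor(g-1)/2\rfloor$ when $g$ is odd, the proposed formula for $g$ odd is immediate from the corresponding formula for $g-1$. So I would restrict attention from the start to $g = 2m$ even.

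For $g = 2m$, Proposition \ref{prop:minusgrobner} provides a monomial vector space basis for $\C[\alpha,\gamma]/J_g^-$ indexed by pairs $(i,j)$ with $0 \leqslant i \leqslant m-1$ and $0 \leqslant j \leqslant g-2i-1$, namely the monomials $\gamma^i \alpha^j$. Since both $\alpha$ and $\gamma$ carry $\Z/4$-grading $2$, the monomial $\gamma^i \alpha^j$ has grading $2(i+j) \pmod 4$; this is $0$ when $i+j$ is even and $2$ when $i+j$ is odd, so the Poincaré polynomial has nonzero contributions only in degrees $0$ and $2$. This accounts for the factor $(1+t^2)$, provided the number of $(i,j)$ with $i+j$ even equals the number with $i+j$ odd.

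The remaining step is the parity count. For each fixed $i$, the allowed values of $j$ form a block of $g-2i$ consecutive integers starting from $0$; since $g-2i$ is even for every $i$ in our range, exactly half of these contribute $i+j$ even and half contribute $i+j$ odd. Summing over $i = 0, 1, \ldots, m-1$ gives total dimension $\sum_{i=0}^{m-1}(g-2i) = m(m+1) = \lfloor g/2\rfloor(\lfloor g/2\rfloor+1)$, evenly split between the two nonzero gradings, which is exactly the claimed formula.

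The main obstacle is essentially nonexistent: all the substance was invested in constructing the Gröbner basis and computing $\deg J_g^-$. The only item requiring any care is the observation that $g-2i$ is even throughout the range, which is what makes the parity count clean without any endpoint correction; the reduction to the even case takes care of the potentially awkward odd situation uniformly.
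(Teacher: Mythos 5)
Your proof is correct and follows essentially the same route as the paper: deduce the count from the monomial basis of Proposition \ref{prop:minusgrobner} after reducing the odd case to the even one. The paper simply asserts that the shaded and unshaded boxes are equal in number; your observation that each row has even length $g-2i$ is a clean way to make that explicit.
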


\vspace{.25cm}

\vspace{.25cm}

\subsection{The kernel of $\beta-8$.}
%\noindent\textbf{Computing the kernel of $\beta-8$.}\\
\vspace{.25cm}

\noindent We now turn to the computation of $P_t(J_g^+)$. Here we use the recursive relations (\ref{eq:pe}) and (\ref{eq:me}) instead of (\ref{eq:mo}) and (\ref{eq:po}). The first thing we notice is that now the situation is simple for {\emph{odd}} indices:\\

\vspace{.10cm}

\begin{lemma}\label{lemma:plusodd}
    For $g\geqslant 3$ odd, $J_g^+$ is generated by $\zeta_{g}^+$ and $\gamma J^+_{g-2}$. Further, for $i \geqslant 0$, $\gamma^{i}\zeta_{g-2i}^+\in J_g^+$.
\end{lemma}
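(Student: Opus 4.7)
The plan is to mirror the proof of Lemma \ref{lemma:minuseven} almost verbatim, swapping the roles of the recursions: here (\ref{eq:po}) and (\ref{eq:pe}) play the parts that (\ref{eq:me}) and (\ref{eq:mo}) played before. First, with $k = g$ (odd), relation (\ref{eq:po}) gives
\[
\zeta^+_{g+1} \;=\; \alpha\,\zeta^+_g \;+\; 2g(g-1)\,\gamma\,\zeta^+_{g-2},
\]
so modulo $\zeta^+_g$ the generator $\zeta^+_{g+1}$ of $J^+_g$ is replaced by a nonzero rational multiple of $\gamma\,\zeta^+_{g-2}$, the coefficient being nonzero since $g \geqslant 3$. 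Next, with $k = g+1$ (even), relation (\ref{eq:pe}) gives
\[
\zeta^+_{g+2} \;=\; \alpha\,\zeta^+_{g+1} \;+\; 16(g+1)^2\,\zeta^+_g \;+\; 2(g+1)g\,\gamma\,\zeta^+_{g-1},
\]
so modulo $(\zeta^+_g,\,\zeta^+_{g+1})$ the third generator $\zeta^+_{g+2}$ is replaced by a nonzero multiple of $\gamma\,\zeta^+_{g-1}$. These two substitutions together show $J^+_g = (\zeta^+_g,\,\gamma\zeta^+_{g-2},\,\gamma\zeta^+_{g-1})$, and since $J^+_{g-2} = (\zeta^+_{g-2},\,\zeta^+_{g-1},\,\zeta^+_g)$ with $\gamma\,\zeta^+_g$ already lying in $(\zeta^+_g)$, this coincides with $(\zeta^+_g,\,\gamma J^+_{g-2})$, proving the first assertion.

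For the second assertion I would induct on the odd integer $g \geqslant 3$, using the first part at each stage. The base case $g = 3$ reduces, for $i = 1$, to checking $\gamma\,\alpha \in J^+_3$, which holds because $\alpha = \zeta^+_1 \in J^+_1$ and $J^+_3 \supset \gamma J^+_1$; the remaining indices $i = 0$ and $i \geqslant 2$ are trivial, the latter because $\zeta^+_{3-2i} = 0$. For the inductive step with odd $g \geqslant 5$, the case $i = 0$ is immediate, and for $i \geqslant 1$ one writes
\[
\gamma^i \zeta^+_{g-2i} \;=\; \gamma \cdot \bigl(\gamma^{i-1}\,\zeta^+_{(g-2) - 2(i-1)}\bigr) \;\in\; \gamma J^+_{g-2} \;\subset\; J^+_g,
\]
where the parenthesized factor lies in $J^+_{g-2}$ by the inductive hypothesis and the final inclusion is the first part of the lemma at $g$.

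There is not really a hard step: the argument is a direct analogue of Lemma \ref{lemma:minuseven}, and the only bookkeeping required is that the two rational coefficients $2g(g-1)$ and $2(g+1)g$ appearing in the substitutions are nonzero, which is clear from $g \geqslant 3$. Unlike the situation for $J^-_g$ with $g$ odd, no invertibility arguments or auxiliary evaluation homomorphisms are needed here; those enter only when handling the parity for which the recursion does not directly eliminate the extra generators, namely the even case for $J^+_g$, which is the next lemma to prove after this one.
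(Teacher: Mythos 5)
Your proof is correct and matches the paper's intent exactly: the paper states that the proof of this lemma is ``nearly identical to that of Lemma~\ref{lemma:minuseven}'' and does not write it out, and you have executed precisely that substitution, using~(\ref{eq:po}) with $k=g$ and~(\ref{eq:pe}) with $k=g+1$ to eliminate the generators $\zeta^+_{g+1}$ and $\zeta^+_{g+2}$ in favor of $\gamma\zeta^+_{g-2}$ and $\gamma\zeta^+_{g-1}$. The spelled-out base case ($\alpha=\zeta^+_1\in J^+_1$, hence $\gamma\alpha\in J^+_3$) and the clean inductive step for the second assertion are both sound.
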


\vspace{.50cm}

\noindent The proof is nearly identical to that of Lemma \ref{lemma:minuseven}. The discussion following Lemma \ref{lemma:minuseven} carries over as well. In particular, the recursive relation (\ref{eq:minusevendegree}) holds with $J_g^+$ in place of $J_g^-$, and for $g\geqslant 3$ odd. By direct inspection, $\deg J_1^+=1$, so the recursion relation in this case implies that $\deg J_g^+ = (g+1)^2/4$ for $g\geqslant 1$ odd. We can proceed to write down a Gr\"{o}bner basis for $J_g^+$ when $g$ is odd just as was done for the ideals $J_g^-$, which we will do shortly. 

To handle the case in which $g$ is even, we attempt to mimic the proof of Lemma \ref{lemma:minusodd}. There is an important difference in this situation: multiplication by $\alpha$ on $\C[\alpha,\gamma]/J_g^+$ is {\emph{not}} invertible. This follows from Lemma \ref{lemma:munoz}, which says that $\alpha$ has eigenvalue $0$ on the $+8$ eigenspace of $\beta$ inside $\C[\alpha,\beta,\gamma]/J_g$. Thus when going through the argument of Lemma \ref{lemma:minusodd} we must keep track of $\alpha$ terms more carefully. Proceeding in this fashion yields:\\

\vspace{.25cm}

\begin{lemma}\label{lemma:pluseven}
For $g\geqslant 0$ even and $i\geqslant 0$ we have the following:
\begin{itemize}
\item[(i)] Modulo $J_g^+$ there exists a unit $w_{g,i}$ such that $w_{g,i} \gamma^{i}\zeta^+_{g-2i-1} \equiv_g \gamma^{i+1}\zeta^+_{g-2i-2}$.
\item[(ii)] Modulo $J_g^+$ there exists a unit $x_{g,i}$ such that the following hold:
\[
    \alpha\cdot x_{g,i}\gamma^{i} \zeta^+_{g-2i-1} \equiv_g \gamma^{i+1}\zeta^+_{g-2i-3} \quad \left(i \text{{\emph{ even}}}\right), \qquad x_{g,i}\gamma^{i} \zeta^+_{g-2i-1} \equiv_g \alpha\cdot\gamma^{i+1}\zeta^+_{g-2i-3} \quad \left(i \text{{\emph{ odd}}}\right).
\]
\end{itemize}
\end{lemma}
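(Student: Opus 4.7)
The plan is to mirror the proof of Lemma \ref{lemma:minusodd}: fix an even $g \geqslant 0$ and induct on $i \geqslant 0$ to prove (i) and (ii) simultaneously. For the base case $i = 0$, the defining containments $\zeta^+_g, \zeta^+_{g+1} \in J_g^+$ combined with the recursions (\ref{eq:pe}) at $k = g$ and (\ref{eq:po}) at $k = g-1$ yield $\zeta^+_{g-1} \equiv_g -\frac{g-1}{8g}\gamma \zeta^+_{g-2}$ and $\alpha \zeta^+_{g-1} \equiv_g -2(g-1)(g-2)\gamma \zeta^+_{g-3}$, establishing (i) and the $i = 0$ (even) instance of (ii) with nonzero rational constants $w_{g,0}$ and $x_{g,0}$. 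The small-$g$ degeneracies ($g=0,2$) can be handled by direct inspection.

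For the inductive step at $i+1$, I would apply the recursion (\ref{eq:pe}) at the even index $k = g-2i-2$ and multiply by $\gamma^{i+1}$ to derive (i) at $i+1$, and apply the recursion (\ref{eq:po}) at the odd index $k = g-2i-3$ and multiply by $\gamma^{i+1}$ to derive (ii) at $i+1$. In each case, invoke (i) at $i$ to replace the term $\gamma^{i+1}\zeta^+_{g-2i-2}$ by a unit multiple of $\gamma^i\zeta^+_{g-2i-1}$, and then invoke (ii) at $i$ in its parity-appropriate form to eliminate the remaining occurrences of $\gamma^i\zeta^+_{g-2i-1}$ and $\alpha\gamma^i\zeta^+_{g-2i-1}$ in favor of $\gamma^{i+1}\zeta^+_{g-2i-3}$ (possibly with a factor of $\alpha$). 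Rearranging produces explicit rational expressions for $w_{g,i+1}$ and $x_{g,i+1}$ in terms of $\alpha, \gamma, w_{g,i}, x_{g,i}$, analogous to the formulas (\ref{eq:uinverse}) and (\ref{eq:vinverse}) of the minus case.

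The main obstacle is exactly the one emphasized in the paragraph preceding the lemma: by Lemma \ref{lemma:munoz}, the triple $(0,8,0)$ lies in the spectrum, so multiplication by $\alpha$ on $\C[\alpha,\gamma]/J_g^+$ has $0$ as an eigenvalue and $\alpha$ is not invertible modulo $J_g^+$. This forces the parity-sensitive form of (ii): as $i$ advances, the single factor of $\alpha$ must migrate from one side of the congruence to the other in alternating fashion, precisely so that no step of the induction requires dividing by $\alpha$. In the ``$i$ even'' case of (ii), $\alpha$ sits on the left so that it can absorb the $\alpha\zeta^+_k$ term produced by the applied recursion; in the ``$i$ odd'' case, $\alpha$ sits on the right so that after multiplying through by $\alpha$ the only $\alpha$-dependence that enters is $\alpha^2$, which still has a nonzero constant term.

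Finally, to verify that the constructed $w_{g,i+1}$ and $x_{g,i+1}$ are units, I would use an invertibility test entirely analogous to Lemma \ref{lemma:invertible}: an element of $\C[\alpha,\gamma]$ is a unit modulo $J_g^+$ if and only if it does not vanish at any of the simultaneous $(\alpha,\gamma)$-eigenvalue pairs from Lemma \ref{lemma:munoz} with $\beta = +8$, namely $(0,0), (\pm 8\sqrt{-1},0), (\pm 16, 0), \ldots$. I would then strengthen the induction hypothesis with explicit non-vanishing (and sign) conditions on the evaluations of $w_{g,i}$, $x_{g,i}$ and of suitable combinations such as $w_{g,i}x_{g,i}^{-1}$ at each of these pairs, in the spirit of the inequalities $\pm\text{ev}_j^\pm(u_k v_k^{-1}) < 0$ from Lemma \ref{lemma:minusodd}. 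The careful placement of $\alpha$ in (ii) is exactly what ensures that these non-vanishing conditions propagate through the inductive step even at the exceptional eigenvalue pair $(0,0)$, where $\alpha$ itself vanishes.
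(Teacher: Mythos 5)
Your proposal matches the paper's proof essentially verbatim: same pairing of the recursions (\ref{eq:pe}) and (\ref{eq:po}), same parity-alternating placement of $\alpha$ in (ii) dictated by the non-invertibility of $\alpha$ modulo $J_g^+$, same analogue of the invertibility test of Lemma \ref{lemma:invertible}, and same strategy of strengthening the induction with sign conditions on the evaluations of $w_i$, $x_i$, and $w_i x_i^{-1}$. One small correction: for $\beta = +8$ the simultaneous $\alpha$-eigenvalues coming from Lemma \ref{lemma:munoz} are $0, \pm 8\sqrt{-1}, \pm 16\sqrt{-1}, \ldots$ (purely imaginary or zero, so that $\alpha^2$ evaluates to a non-positive real number --- which is precisely what makes the sign bookkeeping go through), not $\pm 16$ as you wrote; the paper's evaluation maps are $\text{ev}^\pm_j(f) = f(\pm 4(g-2j)\sqrt{-1},0)$ for $j=1,\dots,g/2$.
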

\vspace{.25cm}

\begin{proof}

As just indicated, the proof is very similar to that of Lemma \ref{lemma:minusodd}. In fact, the manipulations of the relations is exactly the same after replacing ``$+$'' superscripts with ``$-$'' superscripts, changing the sign in front of the $16g^2$ terms, and taking $g$ to be even instead of odd. The resulting recursive formulae for the units differ from the above case only by certain appearances of $\alpha$, which depend on the parity of $i$. Let $\varepsilon(i) = 0$ if $i$ is even and $\varepsilon(i)=1$ if $i$ is odd. Then, writing $w_i$ and $x_i$ in place of $w_{g,i}$ and $x_{g,i}$, the recursion relations we derive are as follows:

\vspace{.22cm}
\begin{equation}
    w_{i+1} \; = \; \frac{-16(g-2i-2)^2 \; - \; \alpha^{2\varepsilon(i)}w_i x_i^{-1} }{2(g-2i-2)(g-2i-3)}, \qquad  x_{i+1} \; = \; \frac{w_i x_i^{-1} \; - \; \alpha^{2(1-\varepsilon(i))}}{2(g-2i-3)(g-2i-4)}. \label{eq:plusinverses}
\end{equation}
\vspace{.15cm}

\noindent We have the initial terms $w_0 = -8g/(g-1)$ and $x_0=-1/(g-1)(g-2)$. The values of $i$ for which $g-2i-3\leqslant 0$ are dealt with just as was done in Lemma \ref{lemma:minusodd}, and we can also assume $g\geqslant 3$, the lower cases holding by direct inspection. All that remains is some way of showing that these equations inductively define invertible elements mod $J_g^+$. For this we use an analogue of Lemma \ref{lemma:invertible}, the invertibility test. For $g$ even, define evaluation maps as follows, for $j=1,\ldots,g/2$:

\vspace{.25cm}
\begin{equation*}
\begin{tikzcd}[column sep=large]
    \text{ev}^\pm_{j}: \C[\alpha,\gamma]/J_g^+ \arrow{r} & \C, &  \qquad \text{ev}^\pm_j(f) = f\left(\pm 4(g-2j)\sqrt{-1},\;0\right).
\end{tikzcd}
\end{equation*}
\vspace{.20cm}

\noindent Then, in the same way we proved Lemma \ref{lemma:invertible}, we see that an element $u\in \C[\alpha,\gamma]/J^+_g$ is invertible if and only if $\text{ev}^\pm_j(u)\neq 0$ for the above values of $j$. We note that $\text{ev}^\pm_j(w_0x_0^{-1}) > 0$. By induction, $(-1)^i\text{ev}^\pm_j(w_ix_i^{-1}) > 0$. More specifically, we find from the recursion formulae (\ref{eq:plusinverses}) that

\vspace{.22cm}
\begin{equation*}
    \text{ev}_j^\pm\left(w_{i}\right) \; < \; 0, \qquad (-1)^{i+1}\text{ev}_j^\pm\left(x_{i}\right) \; > \; 0 \label{eq:signs2}
\end{equation*}
\vspace{.02cm}

\noindent for $j=1,\ldots,g/2$. These non-vanishing evaluations exhibit the invertibility of $w_i$ and $v_i$ at each induction step, and thus complete the proof.
\end{proof}
\vspace{.65cm}

\noindent Before completely describing $J_g^+$ in the style of Proposition \ref{prop:minusgrobner}, we need one more lemma.\\

\vspace{.10cm}

\begin{lemma}\label{lemma:plusjump}
    If $g\geqslant 2$ is even and $u\in \C[\alpha,\gamma]$, then $\gamma^2u\in J_g^+$ implies $u\in J_{g-4}^+$.
\end{lemma}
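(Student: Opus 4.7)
The plan is to establish the equivalent ideal-theoretic identity $(J_g^+ : \gamma^2) = J_{g-4}^+$, of which only the inclusion $(J_g^+ : \gamma^2) \subseteq J_{g-4}^+$ is the content of the lemma. As a warm-up, I would first verify the reverse inclusion $\gamma^2 J_{g-4}^+ \subseteq J_g^+$: starting from the relation $\zeta_g^+ = \alpha\zeta_{g-1}^+ + 2(g-1)(g-2)\gamma\zeta_{g-3}^+ \in J_g^+$ (coming from (\ref{eq:po}) at $k = g-1$) and combining it with Lemma \ref{lemma:pluseven}(ii) at $i = 0$, the explicit value $x_{g,0} = -1/((g-1)(g-2))$ forces both $\alpha\zeta_{g-1}^+ \in J_g^+$ and $\gamma\zeta_{g-3}^+ \in J_g^+$, whence $\gamma^2\zeta_{g-4}^+ \in J_g^+$ by Lemma \ref{lemma:pluseven}(i) at $i = 1$. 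A parallel argument based on $\zeta_{g+2}^+ \in J_g^+$ yields $\gamma\zeta_{g-1}^+ \in J_g^+$, and the relation $16g^2\zeta_{g-1}^+ \equiv_g -2g(g-1)\gamma\zeta_{g-2}^+$ extracted from $\zeta_{g+1}^+ \in J_g^+$ then produces $\gamma^2\zeta_{g-2}^+ \in J_g^+$; the containment $\gamma^2\zeta_{g-3}^+ \in J_g^+$ is immediate.

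For the main inclusion, I would suppose $\gamma^2 u = a\zeta_g^+ + b\zeta_{g+1}^+ + c\zeta_{g+2}^+$ with $a, b, c \in \mathbb{C}[\alpha,\gamma]$ and expand each $\zeta^+$ via (\ref{eq:pe})--(\ref{eq:po}) to rewrite this as $\gamma^2 u = A\zeta_{g-1}^+ + B\gamma\zeta_{g-2}^+ + C\gamma\zeta_{g-3}^+$ for explicit $A, B, C \in \mathbb{C}[\alpha,\gamma]$. Applying Lemma \ref{lemma:pluseven}(i) at $i = 0$ to replace $\zeta_{g-1}^+$ by $w_{g,0}^{-1}\gamma\zeta_{g-2}^+$ modulo $J_g^+$, together with the containments from the warm-up, I aim to rewrite the right-hand side as $\gamma^2 v + r$ where $v \in J_{g-4}^+$ is constructed explicitly and $r \in J_g^+ \cap \gamma^2\mathbb{C}[\alpha,\gamma]$ is a remainder that I would show lies in $\gamma^2 J_{g-4}^+$. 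Integrality of $\mathbb{C}[\alpha,\gamma]$ then allows cancellation of $\gamma^2$ in $\gamma^2(u - v) \in \gamma^2 J_{g-4}^+$, giving $u - v \in J_{g-4}^+$ and hence $u \in J_{g-4}^+$.

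The main obstacle is the bookkeeping in this second step: because $\alpha$ is not a unit modulo $J_g^+$ (in contrast to the $\beta = -8$ case of Lemma \ref{lemma:minusodd}), the manipulations afforded by Lemma \ref{lemma:pluseven}(ii) leave behind residual $\alpha$-factors that must be carefully tracked. Success depends on verifying that these residual factors absorb precisely into the $\gamma^2 J_{g-4}^+$ expression. As a contingency, if the direct manipulation proves unwieldy, I would fall back on a dimension argument, combining the easy inclusion with a bound $\dim (J_g^+ : \gamma^2)/J_g^+ \leq \deg J_g^+ - \deg J_{g-4}^+$ obtained via a Gröbner-basis-style analysis paralleling Proposition \ref{prop:minusgrobner}; this would force equality of dimensions and thereby of ideals.
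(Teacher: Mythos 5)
Your proposal is on the right general track---you correctly understand the lemma as a colon-ideal statement and correctly expand every element of $J_g^+$ into the form $A\zeta_{g-1}^+ + B\gamma\zeta_{g-2}^+ + C\gamma\zeta_{g-3}^+$---but the route you propose has a structural problem and two supporting arguments that are circular.

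The structural problem is that you want to use Lemma~\ref{lemma:pluseven}(i) to replace $\zeta_{g-1}^+$ by $w_{g,0}^{-1}\gamma\zeta_{g-2}^+$ ``modulo $J_g^+$.'' The unit $w_{g,i}$ exists only in the quotient ring $\C[\alpha,\gamma]/J_g^+$, so any lift of $w_{g,0}^{-1}$ to $\C[\alpha,\gamma]$ carries along a $J_g^+$-remainder $j$ that you cannot control. The whole point of the argument has to be that $\gamma$ is a prime element of $\C[\alpha,\gamma]$ and divides the left side, so you must stay \emph{in the polynomial ring} and track which summands are visibly $\gamma$-divisible. Passing to the quotient destroys exactly the divisibility structure you need. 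This is why the paper never invokes Lemma~\ref{lemma:pluseven} here; its proof chooses an alternative generating set $\xi_1 = 8g\zeta_{g-1}^+ + (g-1)\gamma\zeta_{g-2}^+$, $\xi_2 = \alpha\gamma\zeta_{g-2}^+ - 16g(g-2)\gamma\zeta_{g-3}^+$, $\xi_3 = \gamma\zeta_{g-1}^+$ of $J_g^+$, engineered so that exactly two generators are multiples of $\gamma$ while the third is not. Then in the identity $\gamma^2 u = f_1\xi_1 + f_2\xi_2 + f_3\xi_3$ one repeatedly isolates the $\gamma$-free part, forces the relevant coefficients to be $\gamma$-divisible (since $\gamma$ is prime and $\zeta_k^+$ has $\gamma$-free leading term $\alpha^k$), cancels one power of $\gamma$, and observes at the end that the surviving expression lies manifestly in $(\zeta_{g-4}^+,\zeta_{g-3}^+,\zeta_{g-2}^+) = J_{g-4}^+$.

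Two of your supporting steps are circular. First, you propose to show that a remainder $r \in J_g^+ \cap \gamma^2\C[\alpha,\gamma]$ lies in $\gamma^2 J_{g-4}^+$---but writing $r = \gamma^2 r'$, that is precisely the statement $r' \in J_{g-4}^+$ that the lemma asserts for the element $r'$; unless you demonstrate that $r'$ is of strictly lower complexity in some inductive sense, the argument goes nowhere. Second, your fallback dimension count requires knowing $\deg J_g^+$ for $g$ even. But inspect Proposition~\ref{prop:plusgrobner}: prior to Lemma~\ref{lemma:plusjump} only the interval $0 \leqslant \deg J_k^+ - \deg J_{k-1}^+ \leqslant 1$ is available for $k$ even, and the case $k \equiv 2 \pmod 4$ is pinned down \emph{by} Lemma~\ref{lemma:plusjump}. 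So the degree you would feed the fallback argument is exactly the unknown the lemma is meant to determine.

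What would repair your approach is to abandon the modular replacements entirely and carry out the $\gamma$-divisibility analysis directly on your expansion $\gamma^2 u = A\zeta_{g-1}^+ + B\gamma\zeta_{g-2}^+ + C\gamma\zeta_{g-3}^+$: force $\gamma \mid A$, cancel $\gamma$ once, re-expand $\zeta_{g-1}^+$ via (\ref{eq:pe}) to expose $\gamma\zeta_{g-4}^+$, and then force a second cancellation. You would essentially be rediscovering the paper's proof, and in hindsight the role of the generating set $\{\xi_1,\xi_2,\xi_3\}$ is to package precisely this bookkeeping.
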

\vspace{.35cm}

\begin{proof}
    We first use the recursion relations to rewrite a set of generators for $J_g^+$. Define:
    
    \vspace{-.05cm}
\begin{align*}
    \xi_1 \; & := \; 8g \zeta_{g-1}^+ + (g-1)\gamma\zeta_{g-2}^+ \\
    \xi_2 \; & := \; \alpha\gamma\zeta^+_{g-2} - 16g(g-2)\gamma\zeta^+_{g-3}\\
    \xi_3 \; & := \; \gamma\zeta^+_{g-1} 
\end{align*}
\vspace{-.05cm}

\noindent Then $J_g^+ = (\xi_1,\xi_2,\xi_3)$. Indeed, $\xi_1$ is a rational multiple of $\zeta^+_{g+1} -\alpha\zeta_{g}^+$, while $\xi_2$ is a rational multiple of $\alpha\xi_1-8g\zeta^+_g$ and $\xi_3$ is a rational multiple of $\zeta^+_{g+2}-\alpha\zeta^+_{g+1}$. Thus if $\gamma^2 u \in J_g^+$ then $\gamma^2u = f_1\xi_1 + f_2\xi_2 + f_3\xi_3$ where each $f_i\in \C[\alpha,\gamma]$. Since $\xi_2$ and $\xi_3$ are multiples of $\gamma$ and $\xi_1$ is not, we must have that $f_1$ is a multiple of $\gamma$. So we may write $\gamma u  =  f_1' \xi_1 + f_2\xi'_2 + f_3\xi'_3$ in which $f_1'\gamma = f_1$, $\xi'_2\gamma = \xi_2$ and $\xi_3'\gamma =\xi_3$. To simplify things, we may set $\xi'_1 := \gamma\zeta^+_{g-2}$, which is a scalar combination of $\xi_1$ and $\xi_3'$, and we may then write $\gamma u  =  f_1' \xi'_1 + f'_2\xi'_2 + f'_3\xi'_3$ where now we have

\vspace{-.05cm}
\begin{align*}
    \xi'_1 \; & = \; \gamma\zeta^+_{g-2} \\
    \xi'_2 \; & = \; \alpha\zeta^+_{g-2} - 16g(g-2)\zeta^+_{g-3}\\
    \xi'_3 \; & = \; \alpha\zeta^+_{g-2} + 16(g-2)^2\zeta^+_{g-3} + 2(g-2)(g-3)\gamma\zeta_{g-4}^+
\end{align*}
\vspace{-.05cm}

\noindent The expression for $\xi_3'$ is just the recursion expansion for $\zeta_{g-1}^+$. Write $f_i'=\gamma q_i + r_i$ for $i=2,3$ where $r_i$ is a polynomial in $\alpha$. Since the leading term of both $\xi_2'$ and $\xi_3'$ is $\alpha^{g-1}$ and $\xi_1'$ is a multiple of $\gamma$, we see that in order for $\gamma u  =  f_1' \xi'_1 + f'_2\xi'_2 + f'_3\xi'_3$ to hold we must have $r_2=-r_3$.  Now $r_2=-r_3$ implies that the $\alpha\zeta_{g-2}^+$ terms in $\xi_2'$ and $\xi_3'$ cancel, and the remaining term in $\gamma u$ which is not a multiple of $\gamma$ is simply $-16 g(g-2)r_2\zeta_{g-3}^+$ plus $16(g-2)^2r_3\zeta_{g-3}^+$, which is $32(g-1)(g-2)r_3\zeta_{g-3}^+$. Thus:

\vspace{-.05cm}
\begin{align*}
   \gamma u  \; & \phantom{:}= \; p_1 \zeta_{g-3}^+\; + \; \gamma p_2 \\
    p_1 \; & := \; 32(g-1)(g-2)r_3\\
    p_2 \; & := \; f_1'\zeta_{g-2}^+  + q_2\xi_2' + q_3\xi'_3
\end{align*}
\vspace{-.05cm}

\noindent Now since $\zeta_{g-3}^+$ is not a multiple of $\gamma$, in fact $p_1=\gamma p_1'$ for some $p_1'$. We then have $u =  p_1' \zeta_{g-3}^+ + p_2$. Since $\zeta_{g-3}^+$ and $p_2$ are members of $J_{g-4}^+$, we are done.
\end{proof}

\vspace{.60cm}

\noindent With these lemmas we can now prove an analogue of Proposition \ref{prop:minusgrobner}.\\

\vspace{.45cm}

\begin{prop}\label{prop:plusgrobner}
Let $g\geqslant 1$ be odd. Under the lexicographical monomial ordering with $\alpha >\gamma$, the following set is a Gr\"{o}bner basis for the ideal $J^+_g$:

\vspace{.20cm}
\begin{equation*}
    \left\{\zeta^+_g, \;\; \gamma\zeta^+_{g-2}, \;\; \gamma^2\zeta^+_{g-4}, \;\; \ldots, \;\; \gamma^{(g-1)/2}\zeta^+_{1}, \;\; \gamma^{(g+1)/2}\right\}.
\end{equation*}
\vspace{.10cm}

\noindent Consequently, the initial ideal of $J_g^+$ is generated by the monomials $\gamma^i \alpha^{g-2i}$ with $0 \leqslant i \leqslant (g+1)/2$ and thus a basis for $\C[\alpha,\gamma]/J_g^+$ is represented by the following monomials:

\vspace{.40cm}

\begin{center}
\includegraphics{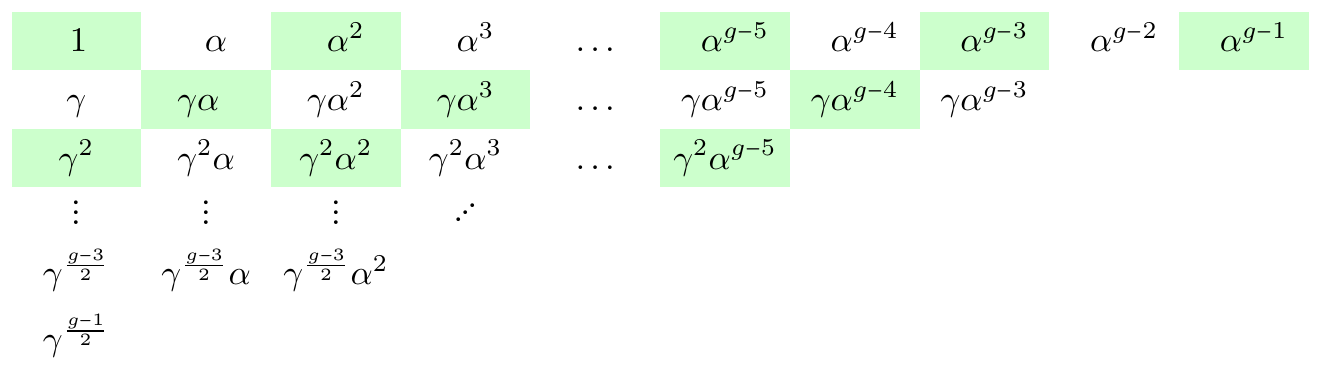}
\end{center}

\vspace{.20cm}

\noindent The even index cases are as follows. If $g +1 \equiv 0 \text{ \emph{(mod 4)}}$ then $J_{g+1}^+ = J_{g}^+$. Otherwise, we have:

\vspace{.20cm}
\[
\begin{array}{lll}
    g + 1 \equiv 2 \text{ \emph{(mod 4)}}: \quad & \deg J_{g+1}^+ - \deg J_{g}^+ = 1, & \quad J_{g}^+ \; = \; \left( J_{g+1}^+, \; \gamma^{(g+1)/2}\right).
\end{array}
\]
\vspace{.10cm}

\noindent Thus a vector space basis for $\C[\alpha,\gamma]/J_{g+1}^+$ when $g + 1 \equiv 2$ {\emph{(mod 4)}} is given by the above grid of monomials with the addition of $\gamma^{(g+1)/2}$.

\end{prop}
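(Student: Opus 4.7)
The odd case proceeds exactly as in Proposition \ref{prop:minusgrobner}. First I would verify that each proposed generator lies in $J_g^+$: the elements $\gamma^i\zeta_{g-2i}^+$ for $0\leqslant i\leqslant(g-1)/2$ by iterating the inclusion $\gamma J_{g-2}^+\subset J_g^+$ from Lemma \ref{lemma:plusodd}, and $\gamma^{(g+1)/2}$ by a parallel induction on odd $g$ whose base case $\gamma\in J_1^+$ is immediate from the explicit form $\zeta_3^+=\alpha^3+64\alpha+4\gamma$. The leading term under the lexicographic order of $\gamma^i\zeta_{g-2i}^+$ is $\gamma^i\alpha^{g-2i}$ by (\ref{eq:degalpha}), so the monomial ideal generated by these leading terms together with $\gamma^{(g+1)/2}$ has codimension $\sum_{b=0}^{(g-1)/2}(g-2b)=(g+1)^2/4$. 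A recursion of the shape (\ref{eq:minusevendegree}) with base case $\deg J_1^+=1$ shows this matches $\deg J_g^+$, and the standard Gr\"obner basis criterion concludes the argument.

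For the even index $h:=g+1$ with $g$ odd, I begin by analyzing the cyclic quotient $J_g^+/J_h^+$, which is generated as a $\C[\alpha,\gamma]/J_h^+$-module by the class of $\zeta_g^+$. The recursion (\ref{eq:po}) at $k=g+2$ gives $\gamma\zeta_g^+\in J_h^+$ directly. Combining (\ref{eq:po}) at $k=g$ (which yields $\alpha\zeta_g^+\equiv -2g(g-1)\gamma\zeta_{g-2}^+\pmod{J_h^+}$) with Lemma \ref{lemma:pluseven}(ii) at $i=0$ (which yields $\alpha x_{h,0}\zeta_g^+\equiv \gamma\zeta_{g-2}^+\pmod{J_h^+}$) and inserting the explicit value $x_{h,0}=-1/(g(g-1))$ from the proof of Lemma \ref{lemma:pluseven}, one obtains $\gamma\zeta_{g-2}^+\in J_h^+$, whence also $\alpha\zeta_g^+\in J_h^+$. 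The class of $\zeta_g^+$ is therefore annihilated by both $\alpha$ and $\gamma$, so $\dim J_g^+/J_h^+\in\{0,1\}$.

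To distinguish the two subcases, observe that $\gamma^{h/2}\in J_g^+$ by the odd case, so the quotient $J_g^+/J_h^+$ is nonzero if and only if $\gamma^{h/2}\notin J_h^+$. For $h\equiv 2\pmod 4$ I would prove $\gamma^{h/2}\notin J_h^+$ by induction in steps of four. The base case $h=2$ is verified by direct computation: the generators $\zeta_2^+,\zeta_3^+,\zeta_4^+$ reduce to $J_2^+=(\alpha^2,16\alpha+\gamma)$, from which $\gamma\notin J_2^+$ is immediate. The inductive step is the contrapositive of Lemma \ref{lemma:plusjump}: from $\gamma^{(h-4)/2}\notin J_{h-4}^+$ one infers $\gamma^{h/2}=\gamma^2\cdot\gamma^{(h-4)/2}\notin J_h^+$. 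This gives $\dim J_g^+/J_h^+=1$, spanned by the class of $\gamma^{h/2}$, which yields the presentation $J_g^+=(J_h^+,\gamma^{h/2})$ and the enlarged monomial basis for $\C[\alpha,\gamma]/J_h^+$.

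For $h\equiv 0\pmod 4$ one must instead establish $\gamma^{h/2}\in J_h^+$, which forces $J_h^+=J_g^+$. The plan is to iterate Lemma \ref{lemma:pluseven}(i) and combine with the recursion to descend the chain $\gamma\zeta_{h-2}^+,\gamma^2\zeta_{h-4}^+,\ldots,\gamma^{h/2}\zeta_0^+=\gamma^{h/2}$ modulo $J_h^+$, with the extra factors of $\alpha$ at each step being absorbed using the recursion expressions for $\alpha\zeta_{h-2i}^+$. When $h/2$ is even these $\alpha$-factors pair up into a unit rather than leaving an excess $\alpha$, collapsing the chain to the desired relation $\gamma^{h/2}\in J_h^+$. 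The main obstacle in the even case is this parity bookkeeping in the iterative chain: although routine for any fixed small $h$ (e.g.\ $h=4$, where a direct computation shows $\alpha\gamma,\gamma^2\in J_4^+$), the inductive formulation requires careful tracking of the constants produced by the units of Lemma \ref{lemma:pluseven}.
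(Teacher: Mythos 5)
Your treatment of the odd case is essentially the paper's, and your bound $\dim J_g^+/J_{g+1}^+\leqslant 1$ is a valid alternative to the paper's: you derive $\alpha\zeta_g^+,\gamma\zeta_g^+\in J_{g+1}^+$ directly, while the paper reaches the same bound by the exact sequence for $\alpha$, identifying $\mathrm{coker}(\alpha)\cong\C[\gamma]/(\gamma)$ and showing $J_g^+/J_{g+1}^+\subset\ker(\alpha)$. Your $g+1\equiv 2\pmod 4$ argument (contrapositive of Lemma \ref{lemma:plusjump} to get $\gamma^{(g+1)/2}\notin J_{g+1}^+$, hence the quotient is the full one-dimensional space) is also the paper's.

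The gap is in the $g+1\equiv 0\pmod 4$ case, and it is a logical one, not just unfinished bookkeeping. You assert that ``$J_g^+/J_{g+1}^+$ is nonzero if and only if $\gamma^{(g+1)/2}\notin J_{g+1}^+$,'' but only the ``if'' is evident from $\gamma^{(g+1)/2}\in J_g^+$. For the ``only if''---which is what you need---you would have to know that $\gamma^{(g+1)/2}$ actually represents a \emph{generator} of the quotient, i.e.\ that its class equals $c[\zeta_g^+]$ for some $c\neq 0$; this is never established, and since $\gamma^{(g+1)/2}$ and $\zeta_g^+$ have polynomial degrees $(g+1)/2$ and $g$ respectively, the coefficient $c$ is the constant term of a nontrivial polynomial and need not be nonzero. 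Consequently, even a complete proof that $\gamma^{(g+1)/2}\in J_{g+1}^+$ (which you acknowledge you only sketch) would not yield $J_{g+1}^+=J_g^+$ by the route you propose. The paper closes this case by an entirely different observation: Lemma \ref{lemma:pluseven}(ii) with $i=(g+1)/2-1$ gives $\alpha\gamma^{(g+1)/2-1}\in J_{g+1}^+$, so $\gamma^{(g+1)/2-1}$ lies in the one-dimensional $\ker(\alpha)$; but $\gamma^{(g+1)/2-1}\notin J_g^+$ (by the odd-case Gr\"obner basis), so it is not in $J_g^+/J_{g+1}^+$, forcing that subspace of $\ker(\alpha)$ to vanish. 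You should replace your $g+1\equiv 0\pmod 4$ plan with an argument of this kind, producing an element of $\ker(\alpha)$ detectably outside $J_g^+/J_{g+1}^+$ rather than trying to show a containment $\gamma^{(g+1)/2}\in J_{g+1}^+$.
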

\vspace{.65cm}

\noindent As before, we have shaded boxes behind the terms of degree $0$ (mod 4), and the bottom four terms should be shaded in this fashion according to the parity of $(g-1)/2$.\\

\begin{proof}
    The statement regarding $J_g^+$ for $g$ odd follows from Lemma \ref{lemma:plusodd} and the argument in Proposition \ref{prop:minusgrobner}. Note in particular that the inclusion $\gamma^{(g+1)/2}\in J_g^+$ follows inductively from the base case $\gamma\in J_1^+$ and Lemma \ref{lemma:plusodd}. It remains to prove the final statements regarding $J^+_{g+1}$ for $g+1$ even. Henceforth we set $k:=g+1$, so that $k$ is even. First, consider the exact sequence
    
    \vspace{.10cm}

\begin{equation*}
\begin{tikzcd}[column sep=large]
   0 \arrow{r} & \text{ker}(\alpha) \arrow{r} & \C[\alpha,\gamma]/J_{k}^+ \arrow{r}{\alpha} &  \C[\alpha,\gamma]/J_{k}^+ \arrow{r} & \text{coker}(\alpha) \arrow{r} & 0 
\end{tikzcd}
\end{equation*}
\vspace{.10cm}

\noindent in which the map $\alpha$ is multiplication by $\alpha$. The cokernel may be identified with $\C[\gamma]/I_k^+$ in which $I_k^+$ is defined as was $J_k^+$ by setting $\alpha=0$. It is easily verified that $I_k^+$ is the principal ideal generated by $\gamma$, and thus $\deg I_k^+ =1$. On the other hand, taking $i \gg 0$ and recalling that by definition $\zeta_{r}^+$ vanishes for $r <0$, we obtain the following inclusions from Lemma \ref{lemma:pluseven} (ii):
    
    \vspace{.15cm}
    \begin{equation}
        \alpha\gamma^{i}\zeta^+_{k-2i-1} \in J_k^+ \quad \text{ ($i$ even)}, \qquad \quad \gamma^{i}\zeta^+_{k-2i-1} \in J_k^+ \quad \text{ ($i$ odd)}.\label{eq:relnsplus}
    \end{equation}
    \vspace{.08cm}
    
    \noindent In particular, setting $i=0$ yields $\alpha \zeta_{k-1}^+ \in J_k^+$. This implies that $\alpha J_{k-1}^+ \subset J_k^+$, which in turn implies that $J_{k-1}^+/J_k^+ \subset \text{ker}(\alpha)$. Thus from the above exact sequence we obtain:\\

\begin{equation}
   0\;  \leqslant \; \deg J_{k}^+  \; - \; \deg J_{k-1}^+  \; \leqslant \; 1, \qquad k\geqslant 2 \text{ and even}.\label{eq:plusevendegree}
\end{equation}
\vspace{.10cm}

\noindent Now suppose $k\equiv 0$ (mod 4). Then (\ref{eq:relnsplus}) with $i=k/2-1$ yields $\gamma^{k/2-1} \alpha \in J_k^+$, in which we have identified $\zeta_1^+=\alpha$. In other words, $\gamma^{k/2-1}$ represents an element in $\text{ker}(\alpha)$. On the other hand, $\gamma^{k/2-1}\notin J_k^+$ follows from $\gamma^{k/2-1}\notin J^+_{k-1}$, which we know since $k-1$ is odd. This gives a generator for the 1-dimensional space $\text{ker}(\alpha)$ which is not contained in the subspace $J_{k-1}^+/J^+_k$. We conclude that $J_{k}^+ = J^+_{k-1}$. In other words, the difference of degrees in (\ref{eq:plusevendegree}) is equal to zero. This completes the claim in the proposition regarding the case $k = g+1 \equiv 0$ (mod 4).

For the case in which $k = g+1 \equiv 2$ (mod 4) we claim that the difference of degrees in (\ref{eq:plusevendegree}) is instead equal to $1$. To achieve this it suffices to show that $\gamma^{k/2}$, which is a member of $J_{k-1}^+$, is not a member of $J_k^+$. We use induction on $k$ over $k\geqslant 2$ with $k\equiv 2$ (mod 4). By direct inspection, $\gamma \notin J_2^+$. Suppose the result holds up to and including the index $k-4$, and suppose for contradiction that $\gamma^{k/2}\in J_k^+$. Then by Lemma \ref{lemma:plusjump}, $\gamma^{k/2} = \gamma^2\gamma^{(k-4)/2} \in J_k^+$ implies $\gamma^{(k-4)/2}\in J_{k-4}^+$, contradicting the inductive hypothesis, and completing the proof.
\end{proof}

\vspace{.65cm}

\begin{corollary}\label{cor:jplus}
For $g\geqslant 0$, the $\Z/4$-graded Poincar\'{e} polynomial for the ideal $J_g^+$ is given by

\vspace{.25cm}
\[  
    P_t(J_g^+)\;\;=\;\; \begin{cases}\quad\left\lceil \frac{1}{8}(g+1)^2 \right\rceil\; \; + \;\;\left\lfloor \frac{1}{8}(g+1)^2 \right\rfloor \cdot t^2 & \quad g \text{ odd}\\
     & \\
    \quad \;\;\;\;\;\;\;\; \left\lceil \frac{1}{8}g^2 \right\rceil \;\;\;\, + \;\;\;\left\lceil \frac{1}{8}g^2 \right\rceil \cdot t^2 & \quad g \text{ even}\end{cases}
\]
\end{corollary}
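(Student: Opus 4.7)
The plan is to deduce the corollary directly from the explicit monomial bases supplied by Proposition \ref{prop:plusgrobner} via an elementary parity count. Recall that both $\alpha$ and $\gamma$ have $\Z/4$-grading $2$, so a monomial $\gamma^i\alpha^j$ lies in grading $2(i+j)\pmod 4$. In particular, only gradings $0$ and $2$ ever occur, and they are distinguished by the parity of $i+j$. Writing $N_e(g)$ and $N_o(g)$ for the numbers of basis monomials with $i+j$ even and odd respectively, the task reduces to computing these two totals; the Poincar\'{e} polynomial is then $N_e(g)+N_o(g)t^2$.

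First I handle the odd case $g=2m+1$. Proposition \ref{prop:plusgrobner} gives the basis $\{\gamma^i\alpha^j : 0\leqslant i \leqslant m,\; 0\leqslant j \leqslant g-2i-1\}$, whose total size is $(g+1)^2/4$, already recorded. To split this between the two parities I compute the signed sum
\[
    N_e(g)-N_o(g) \;=\; \sum_{i=0}^{m}(-1)^i\sum_{j=0}^{g-2i-1}(-1)^j.
\]
Since $g-2i$ is odd, each inner sum equals $1$, so the total reduces to $\sum_{i=0}^{m}(-1)^i$, which is $1$ when $m$ is even (i.e.\ $g\equiv 1\pmod 4$) and $0$ when $m$ is odd (i.e.\ $g\equiv 3\pmod 4$). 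Combining with $N_e(g)+N_o(g)=(g+1)^2/4$ and noting that $(g+1)^2\equiv 4\pmod{16}$ in the former case and $(g+1)^2\equiv 0\pmod{16}$ in the latter, a short check of floors and ceilings confirms the claimed expression for $g$ odd.

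For the even case $g$, I split on $g\pmod 4$ using the last part of Proposition \ref{prop:plusgrobner} applied with $g$ replaced by $g-1$. If $g\equiv 0\pmod 4$ then $J_g^+=J_{g-1}^+$, so $P_t(J_g^+)=P_t(J_{g-1}^+)$; the odd-case formula at $g-1$ then reads $(g^2/8)(1+t^2)$, matching the claim since $g^2/8$ is an integer. If $g\equiv 2\pmod 4$, the quotient $\C[\alpha,\gamma]/J_g^+$ is obtained from $\C[\alpha,\gamma]/J_{g-1}^+$ by adjoining the single class $\gamma^{g/2}$, which sits in grading $g\equiv 2\pmod 4$. Hence $P_t(J_g^+)=P_t(J_{g-1}^+)+t^2$, and applying the odd-case formula at $g-1$ (so $(g-1+1)^2=g^2\equiv 4\pmod{16}$, giving $\lceil g^2/8\rceil$ and $\lfloor g^2/8\rfloor$ differing by $1$) turns the two coefficients into $\lceil g^2/8\rceil$ and $\lfloor g^2/8\rfloor + 1=\lceil g^2/8\rceil$, as desired.

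There is no real conceptual obstacle: all structural work has been done in Proposition \ref{prop:plusgrobner}, and the remaining step is a parity count together with careful bookkeeping of floors and ceilings modulo $16$. The most delicate point is simply verifying that the four residues of $g\pmod 4$ each reproduce the single closed-form expression stated in the corollary, and that the ``extra'' basis element $\gamma^{g/2}$ appearing when $g\equiv 2\pmod 4$ lands in grading $2$ so as to preserve the symmetry $N_e(g)=N_o(g)$ that is forced in that case.
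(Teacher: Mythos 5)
Your argument is correct and is essentially the paper's own proof made explicit: the paper simply counts shaded versus unshaded monomials in the grid of Proposition \ref{prop:plusgrobner}, and your alternating-sum trick together with the case split on $g \bmod 4$ carries out exactly that count. The bookkeeping of floors and ceilings and the placement of the extra element $\gamma^{g/2}$ for $g\equiv 2\pmod 4$ are both handled correctly.
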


\vspace{.65cm}

\noindent This follows from the proposition by counting the number of shaded boxes for the constant term, and counting the rest of the monomials for the coefficient in front of $t^2$.

\newpage

\subsection{The betti numbers}

\vspace{.20cm}

We now have all the information we need to compute the $\Z/4$-graded Poincar\'{e} polynomial for the framed instanton homology via (\ref{eq:poincarepolynomial}), which leads to Theorem \ref{thm:main}. First, we write down the polynomial for the invariant part, for the plus and minus parts, which have essentially been computed:

\vspace{.35cm}

\begin{equation}
    P_t\left( I^\#_\text{inv}(\Sigma\times S^1)^\pm_w \right)    \; = \;  (1+t^3)P_t\left(\text{ker}\left(\beta \pm 8 \; \rotatebox[origin=c]{-90}{$\circlearrowright$} \; I_\text{inv}(\Sigma\times S^1)'_w \right)\right)  \;  = \; (1+t^3)P_t(J^\mp_g).\label{eq:poincarerel}
\end{equation}

\vspace{.45cm}

\noindent Here we use the notation $f\,\rotatebox[origin=c]{-90}{$\circlearrowright$}\, V$ to mean that a map $f$ is acting as an endomorphism on the vector space $V$. We again remark that the factor $(1+t^3)$ is present because the framed group on the left is the mapping cone of $\beta \pm 8$, so is isomorphic to the kernel plus cokernel of $\beta \pm 8$ acting on the invariant part of Mu\~{n}oz's ring; the cokernel has the same dimension as the kernel, but has an overall grading shift of $-1 \equiv 3 $ (mod 4). From Corollaries \ref{cor:jminus} and \ref{cor:jplus} we deduce the following:
\vspace{.25cm}

\begin{equation}
    P_t\left( I^\#_\text{inv}(\Sigma\times S^1)^\pm_w \right) \; = \; 
    \begin{cases}
        \;\;\;\frac{1}{2}\lfloor \frac{1}{2}g \rfloor\left(\lfloor \frac{1}{2}g \rfloor + 1\right)\cdot(1+t+t^2+t^3) & \pm = +, \quad \text{all } g\\
        &\\
        \;\;\;\left\lceil \frac{1}{8}g^2 \right\rceil\cdot(1+t+t^2+t^3) & \pm = -, \quad g \text{ even}\\
        &\\
        \;\;\;\left\lceil \frac{1}{8}(g+1)^2 \right\rceil\cdot (1+t^3) \; + \; \left\lfloor \frac{1}{8}(g+1)^2 \right\rfloor\cdot (t+t^2) \;\;\; & \pm = -, \quad g \text{ odd}
    \end{cases}\label{eq:poincareinv}
\end{equation}

\vspace{.60cm}

\noindent To move beyond the invariant part, we sum over the tensor powers of primitive components as in (\ref{eq:poincarepolynomial}). We can compute the polynomials in this way first for the kernel of $\beta \pm 8$ on Mu\~{n}oz's ring, and later multiply by $(1+t^3)$ for the framed group. Recalling from the introduction the definition

\vspace{.15cm}

\[
    s_i(g) \; := \; \sum_{\substack{ 0 \leqslant k < g  \\ k \equiv i (\text{mod }4)}} {2g \choose k}
\]

\vspace{.30cm}

\noindent and setting $\varepsilon = 0$ if $g$ is even, and $\varepsilon = 1$ if $g$ is odd, the $\Z/4$-graded Poincar\'{e} polynomials of the kernels of $\beta \pm 8$ on the totality of Mu\~{n}oz's ring are given by the following:\\

\vspace{.60cm}

\begin{prop}\label{prop:poincareker}
    With $P_t\left(\text{{\emph{ker}}}\left(\beta \pm 8  \; \rotatebox[origin=c]{-90}{$\circlearrowright$} \; I(\Sigma\times S^1)'_w \right)\right) \; = \; i_0^\pm + i^\pm_1\cdot t + i^\pm_2 \cdot t^2 + i^\pm_3\cdot t^3$ and $j\in \{0,1\}$:
    
    \vspace{.20cm}
    \[
    \begin{array}{rlrl}
         i_{0+\varepsilon}^+ \,= \, i_{2+\varepsilon}^+ &  \hspace{-.2cm}=\, \displaystyle \frac{g^2-g}{4(2g-1)}{2g\choose g} &  \quad i_{0+\varepsilon}^-  \,=\,  i_{2+\varepsilon}^- &  \hspace{-.2cm}=\,  \displaystyle \frac{g^2+3g-2}{4(2g-1)}{2g \choose g}-2^{g-2}\left(1+2^{g-1}\right) \\
      &\phantom{r}&&\\
      &\phantom{r}&&\\
      i_{1+\varepsilon}^+ \,= \, i_{3+\varepsilon}^+ & \hspace{-.2cm}=\,  \displaystyle \frac{g^2}{4(2g-1)}{2g \choose g} -  2^{2g-3} & i_{\varepsilon+2j-1}^-  &  \hspace{-.2cm}= \, \displaystyle \frac{g^2}{4(2g-1)}{2g\choose g} + (-1)^{j} \left(s_{1-\varepsilon}(g)-2^{2g-3}\right)
    \end{array}
    \]

\end{prop}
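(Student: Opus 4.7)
The approach is to expand the $\text{Sp}(2g,\Z)$-decomposition of $I(\Sigma\times S^1)'_w$ from Theorem \ref{thm:munoz} and insert the explicit formulas for the invariant kernels supplied by Corollaries \ref{cor:jminus} and \ref{cor:jplus}. Writing $D_k := \binom{2g}{k} - \binom{2g}{k-2}$, the identifications $K^\pm_{g-k} \cong \C[\alpha,\gamma]/J^\mp_{g-k}$ give
\[
    P_t\bigl(\ker(\beta \pm 8) \,\rotatebox[origin=c]{-90}{$\circlearrowright$}\, I(\Sigma\times S^1)'_w\bigr) \;=\; \sum_{k=0}^g D_k\, t^{3k}\, P_t(J^\mp_{g-k})
\]
in the ring $\Z[t]/(t^4-1)$. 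Since $t^{3k}$ cycles through $\{1,t^3,t^2,t\}$ as $k$ ranges over residues mod $4$, multiplication by $t^{3k}$ preserves the partition of gradings into $\{0,2\}$ versus $\{1,3\}$, depending only on the parity of $k$.

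For the $\beta+8$ kernel one substitutes $P_t(J^-_r) = c_r(1+t^2)$ with $c_r := \tfrac{1}{2}\lfloor r/2\rfloor(\lfloor r/2\rfloor + 1)$. The symmetry of $1+t^2$ immediately yields $i^+_0 = i^+_2$ and $i^+_1 = i^+_3$, reducing the task to computing $\sum_{k\text{ even}} D_k c_{g-k}$ and $\sum_{k\text{ odd}} D_k c_{g-k}$. Applying the telescoping identity
\[
    \sum_{k=0}^g D_k f(k) \;=\; \sum_{k=0}^{g-2}\binom{2g}{k}\bigl(f(k)-f(k+2)\bigr) \;+\; \binom{2g}{g-1}f(g-1) \;+\; \binom{2g}{g}f(g)
\]
with parity-restricted choices of $f$, together with standard evaluations of $\sum_k k^2 \binom{2g}{k}$ and $\sum_{k \text{ even}}\binom{2g}{k} = 2^{2g-1}$, reduces these to the claimed closed forms involving $\binom{2g}{g}$ and $2^{2g-3}$.

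For the $\beta-8$ kernel one substitutes $P_t(J^+_r) = a_r + b_r t^2$ from Corollary \ref{cor:jplus}, noting that $a_r = b_r$ except when $r \equiv 1 \pmod 4$, where $a_r - b_r = 1$. Decomposing $P_t(J^+_r) = b_r(1+t^2) + (a_r-b_r)$ separates the sum into a symmetric part, handled just as in the $\beta+8$ case and producing the $\tfrac{g^2+3g-2}{4(2g-1)}\binom{2g}{g}$ and $\tfrac{g^2}{4(2g-1)}\binom{2g}{g}$ leading terms along with the $2^{g-2}(1+2^{g-1})$ correction, and a ``defect'' part supported on those $k$ with $g-k \equiv 1 \pmod 4$. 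The defect contributes to a single pair of gradings with opposite signs through the $t^{3k}$ shift, accounting for the $(-1)^j$ alternation between $i^-_{\varepsilon-1}$ and $i^-_{\varepsilon+1}$. A telescoping argument applied to $\sum_{k :\, g-k \equiv 1\,(4)} D_k$ then identifies this defect with $s_{1-\varepsilon}(g)$ by peeling off boundary terms via the $\binom{2g}{k-2}$ shift.

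The main obstacle is the bookkeeping in the $\beta-8$ case: pairing the four residues of $g-k$ modulo $4$ (which control which line of Corollary \ref{cor:jplus} applies) with the cyclic $t^{3k}$ shift and with the parity $\varepsilon = g \bmod 2$, and then confirming that the telescoping of $D_k$ over a single residue class modulo $4$ produces exactly $s_{1-\varepsilon}(g)$ with the correct sign. The underlying binomial identities are standard, but the case splitting is substantial.
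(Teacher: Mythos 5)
Your proposal is correct and follows essentially the same route as the paper: both expand the $\text{Sp}(2g,\Z)$-decomposition to write $P_t\bigl(\ker(\beta\pm 8)\bigr) = \sum_k D_k\, t^{3k} P_t(J^\mp_{g-k})$ with $D_k = \binom{2g}{k} - \binom{2g}{k-2}$, insert the closed forms from Corollaries~\ref{cor:jminus} and~\ref{cor:jplus}, and then evaluate the sums by telescoping with the shift $\binom{2g}{k-2}$ and by standard binomial evaluations such as $\sum_{k\text{ even}}\binom{2g}{k} = 2^{2g-1}$ and the fourth-root-of-unity filter giving $s_0(g)+s_2(g)=2^{2g-2}$. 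The paper presents this as a routine series manipulation and works out a single representative case ($i^-_0$, $g\equiv 1\pmod 4$) explicitly; your framing via the split $P_t(J^+_r) = b_r(1+t^2) + (a_r - b_r)$, which isolates a symmetric piece from a defect supported on $r\equiv 1\pmod 4$, is a cleaner way to see \emph{a priori} that $i^+_\ell$ depends only on $\ell\bmod 2$, that three of the four $i^-_\ell$ share the symmetric value, and that the defect quantifies exactly the $(-1)^j s_{1-\varepsilon}(g)$ correction. One small caution on the last point: the defect sum is $\sum_{k:\,g-k\equiv 1\,(4)} D_k = s_{1-\varepsilon}(g) - s_{3-\varepsilon}(g) = 2\bigl(s_{1-\varepsilon}(g) - 2^{2g-3}\bigr)$, not simply $s_{1-\varepsilon}(g)$ — you need the identity $s_{1-\varepsilon}(g) + s_{3-\varepsilon}(g) = 2^{2g-2}$ to finish, which the paper supplies via the $(1\pm\sqrt{-1})^{2g}$ expansion. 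With that adjustment your outline reproduces the paper's computation.
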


\vspace{.35cm}

\begin{proof}
	The computations are routine manipulations of series involving binomial coefficients. We only compute the particular case of $i^-_0$ when $g\equiv 1$ (mod 4) for illustration, and leave the rest to the interested reader. The number $i_0^-$ is equal to the constant coefficient in the polynomial

\vspace{.25cm}
\begin{equation*}
    \sum_{k=0}^g \left[{2g \choose k} - {2g \choose k-2} \right]\cdot t^{3k}P_t\left(J^+_{g-k}\right).
\end{equation*}
\vspace{.18cm}

\noindent Recall that we view our polynomials as elements in $\Z[t]/(t^4-1)$. Thus ``constant coefficient'' is synonomous with the sum of the coefficients appearing in front of the monomials $t^{4\ell}$ for $\ell\geqslant 0$. From Corollary \ref{cor:jplus} we identify two kinds of contributions: those involving the constant coefficient of $P_t(J_{g-k}^+)$ multiplied against $t^{3k}$ when $k \equiv 0$ (mod 4), and those involving the $t^2$ coefficient of $P_t(J_{g-k}^+)$ multiplied against $t^{3k}$ when $k\equiv 2$ (mod 4). Thus we may write

\vspace{.25cm}
\begin{equation*}
    i^-_0 \; \; = \;\; \sum_{\substack{ 0 \leqslant k \leqslant g  \\ k \equiv 0 (\text{mod }4)}} \left[{2g \choose k} - {2g \choose k-2} \right]\cdot\left\lceil \textstyle{\frac{1}{8}}(g-k+1)^2 \right\rceil \;\; + \;\;  \sum_{\substack{ 0 \leqslant k \leqslant g  \\ k \equiv 2 (\text{mod }4)}} \left[{2g \choose k} - {2g \choose k-2}\right]\cdot\left\lfloor \textstyle{\frac{1}{8}}(g-k+1)^2 \right\rfloor.
\end{equation*}
\vspace{.18cm}

\noindent With the assumption that $g\equiv 1$ (mod 4), and the mod 4 congruence conditions on $k$ in these sums, we may remove the ceiling and floor functions to obtain the following:

\vspace{.25cm}
\begin{equation*}
    i^-_0 \; \; = \;\; \sum_{\substack{ 0 \leqslant k \leqslant g  \\ k \equiv 0 (\text{mod }2)}} \left[{2g \choose k} - {2g \choose k-2} \right]\cdot  {\textstyle{\frac{1}{8}}}(g-k+1)^2 \;\; + \;\;  \sum_{\substack{ 0 \leqslant k \leqslant g  \\ k \equiv 0 (\text{mod }4)}} \left[{2g \choose k} - {2g \choose k-2}\right]\cdot\textstyle{\frac{1}{2}}.
\end{equation*}
\vspace{.18cm}

\noindent Next, we simplify the first term on right side by collecting terms in front of common binomial coefficients, and use the definition of $s_i(g)$ to rewrite the second term, to obtain the following:

\vspace{.28cm}
\begin{equation*}
    i^-_0 \; \; = \;\; \sum_{\substack{ 0 \leqslant k \leqslant g  \\ k \equiv 0 (\text{mod }2)}}{2g \choose k}\cdot  {\textstyle{\frac{1}{2}}}(g-k) \;\;+\;\; \frac{1}{2}s_0(g) \;\; - \;\; \frac{1}{2}s_2(g).
\end{equation*}
\vspace{.18cm}

\noindent Using ${2g\choose k} = {2g\choose 2g-k}$ we see that $s_0(g) + s_2(g)$ is the sum of binomial coefficients ${2g \choose k}$ with $k\equiv 0$ (mod 4) from $k=0$ to $k=2g$. For $g \geqslant 1$ we have the general formula

\vspace{.30cm}
\begin{equation*}
    \sum_{\substack{ 0 \leqslant k \leqslant 2g  \\ k \equiv 0 (\text{mod }4)}}{2g \choose k} \;\; = \;\; \frac{1}{4}\left( 2^{2g} + (-1)^{\lfloor g/2 \rfloor}\left(1+(-1)^g\right)2^g  \right),
\end{equation*}
\vspace{.18cm}

\noindent obtained, for example, by expanding $\frac{1}{2}(1+\sqrt{-1})^{2g} + \frac{1}{2}(1-\sqrt{-1})^{2g}$ using the binomial theorem on the one hand, and writing it in complex polar coordinates on the other. In our situation this yields the relation $s_0(g) +s_2(g) = 2^{2g-2}$. Thus we now have the following:

\vspace{.50cm}

\begin{equation}
    i^-_0 \; \; = \;\;\frac{g}{2}\sum_{\substack{ 0 \leqslant k \leqslant g  \\ k \equiv 0 (\text{mod }2)}} {2g \choose k} \;\; - \;\; \frac{1}{2}\sum_{\substack{ 0 \leqslant k \leqslant g  \\ k \equiv 0 (\text{mod }2)}} {2g \choose k} k \;\; + \;\; s_0(g) \;\; - \;\; 2^{2g-3}.\label{eq:someeq}
\end{equation}
\vspace{.18cm}

\noindent The first term on the right is easily dealt with: the sum of binomial coefficients behind $g/2$ is exactly half the sum of all binomial coefficients ${2g \choose k}$ with $k$ even from $k=0$ to $k=2g$, which itself is equal to $2^{2g-1}$. Thus the first term is equal to $g 2^{2g-3}$. The second term on the right is computed similarly, upon using the identity ${2g \choose k}k = {2g-1\choose k-1}2g$. We first rewrite it as follows:

\vspace{.25cm}
\begin{equation}
   -\frac{1}{2} \sum_{\substack{ 0 \leqslant k \leqslant g  \\ k \equiv 0 (\text{mod }2)}} {2g \choose k} k \; \; = \;\; -g \sum_{\substack{ 0 \leqslant k \leqslant g  \\ k \equiv 0 (\text{mod }2)}} {2g-1 \choose k-1}  \;\; = \;\; -g\sum_{\substack{ 0 \leqslant k \leqslant g  \\ k \equiv 0 (\text{mod }2)}} {2g-2 \choose k-1} + {2g-2\choose k-2}.\label{eq:term2}
\end{equation}
\vspace{.18cm}

\noindent The final expression behind the factor of $-g$ is the sum of binomial coefficients ${2g-2\choose \ell}$ for $\ell$ from $\ell = 0$ to $\ell = g-2$. With some compensation of the ``middle term,'' this is just half the sum of binomial coefficients ${2g-2\choose \ell}$ for $\ell$ from $\ell = 0$ to $\ell =2g-2$. Precisely, the expression (\ref{eq:term2}) is equal to $-g2^{2g-3}+ \frac{g}{2}{2g-2\choose g-1}$. Plugging the computations of these first two terms into (\ref{eq:someeq}) yields:

\vspace{.25cm}
\begin{equation*}
    i^-_0 \; \; = \;\; g2^{2g-3} \;\;+ \;\; \left(\frac{g}{2}{2g-2\choose g-1} - g 2^{2g-3} \right) \;\; + \;\; s_0(g) \;\; - \;\; 2^{2g-3}.
\end{equation*}
\vspace{.18cm}

\noindent Finally, using the relation ${2g-2\choose g-1} = \frac{g}{2(2g-1)}{2g \choose g}$, from this we obtain the expression stated in the proposition for $i^-_{1+2j-\varepsilon}$ with $j=0$ and $\varepsilon=1$. The computations for the other numbers $i_\ell^\pm$ involve the same types of manipulations, and no more.
\end{proof}
\vspace{.95cm}

\noindent From Proposition \ref{prop:poincareker} and (\ref{eq:poincarerel}) we gather that $b_\ell^\pm = i_\ell^{\pm} + i_{\ell +1}^{\pm}$, and obtain the following:\\

\vspace{.40cm}

\begin{corollary}
	 Let $P_t\left(I^\#(\Sigma\times S^1)^\pm_w \right) \; = \; b_0^\pm + b^\pm_1\cdot t + b^\pm_2 \cdot t^2 + b^\pm_3\cdot t^3$. For the ``$+$'' case we have:

\vspace{.20cm}
\[
b_0^+ \, = \, b^+_1 \, = \, b_2^+ \, = \, b_3^+ \,=\, \frac{g}{4}{2g\choose g} - 2^{2g-3}.
\]
\vspace{.20cm}

\noindent With $\varepsilon =0$ for $g$ even and $\varepsilon =1$ for $g$ odd as above, for the ``$-$'' case we have:
\vspace{.20cm}

\begin{align*}
b_{0+\varepsilon}^- \, = \, b^-_{1+\varepsilon} \, & = \,  \frac{g+2}{4}{2g\choose g} - s_{1-\varepsilon}(g) - 2^{g-2},\\
&\\
	b_{2+\varepsilon}^- \, = \, b^-_{3+\varepsilon} \, & = \, \frac{g+2}{4}{2g\choose g} + s_{1-\varepsilon}(g) - 2^{g-2}\left(1+2^g\right).
\end{align*}

\end{corollary}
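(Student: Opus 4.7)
The plan is to read off the $b_\ell^\pm$ from the $i_\ell^\pm$ supplied by Proposition~\ref{prop:poincareker}, using the identity
\[
    P_t\!\left(I^\#(\Sigma\times S^1)^\pm_w\right) \;=\; (1+t^3)\,P_t\!\left(\text{ker}\!\left(\beta\pm 8\;\rotatebox[origin=c]{-90}{$\circlearrowright$}\;I(\Sigma\times S^1)'_w\right)\right),
\]
which is a consequence of the mapping-cone exact sequence (\ref{eq:twistedgysin}) for $u\pm 8$: the framed homology is the kernel plus a grading-shifted copy of the cokernel, and for the nilpotent operator $\beta\pm 8$ on a finite-dimensional space, kernel and cokernel have the same dimension. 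Multiplication by $(1+t^3)$ in $\Z[t]/(t^4-1)$ gives $b_\ell^\pm = i_\ell^\pm + i_{\ell+1}^\pm$, where indices are taken mod $4$.

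For the ``$+$'' case all four coefficients come out equal, and
\[
    b_{\ell}^+ \;=\; i_{0+\varepsilon}^+ + i_{1+\varepsilon}^+ \;=\; \tfrac{2g^2-g}{4(2g-1)}\tbinom{2g}{g} - 2^{2g-3},
\]
which simplifies to $\tfrac{g}{4}\binom{2g}{g}-2^{2g-3}$ via the factorization $2g^2-g = g(2g-1)$. For the ``$-$'' case the pair $i_{0+\varepsilon}^-,i_{2+\varepsilon}^-$ is equal and the pair $i_{1+\varepsilon}^-,i_{3+\varepsilon}^-$ differ by the sign $(-1)^j$ in front of $s_{1-\varepsilon}(g)-2^{2g-3}$. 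Adding adjacent pairs, the $\pm s_{1-\varepsilon}(g)$ and $\mp 2^{2g-3}$ terms either cancel or double, producing the two distinct values $b_{0+\varepsilon}^- = b_{1+\varepsilon}^-$ and $b_{2+\varepsilon}^- = b_{3+\varepsilon}^-$ stated in the corollary.

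The only real algebraic step is the identity
\[
    \tfrac{g^2+3g-2}{4(2g-1)} + \tfrac{g^2}{4(2g-1)} \;=\; \tfrac{(2g-1)(g+2)}{4(2g-1)} \;=\; \tfrac{g+2}{4},
\]
which collapses the coefficient of $\binom{2g}{g}$ to the simple form $\tfrac{g+2}{4}$; once this is noted the remaining powers of two collect as $-2^{g-2}$ in the top two rows (because $2^{2g-3} - 2^{g-2}\cdot 2^{g-1} = 0$) and as $-2^{g-2}(1+2^g)$ in the bottom two rows (because $-2^{2g-3}-2^{g-2}\cdot 2^{g-1} - 2^{2g-3} = -2^{g-2}-2^{2g-2}$).

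There is no substantive obstacle: the corollary is a bookkeeping consequence of Proposition~\ref{prop:poincareker}. The only thing to keep straight is the mod $4$ indexing of $i_{\varepsilon+2j-1}^-$ for $j\in\{0,1\}$, so that one correctly pairs $i_{3+\varepsilon}^-$ with the $j=0$ formula (which contributes $+s_{1-\varepsilon}(g)$) and $i_{1+\varepsilon}^-$ with the $j=1$ formula (which contributes $-s_{1-\varepsilon}(g)$); getting these signs right is what determines which of $b_{0+\varepsilon}^-,b_{2+\varepsilon}^-$ receives $+s_{1-\varepsilon}(g)$ versus $-s_{1-\varepsilon}(g)$ in the final expressions.
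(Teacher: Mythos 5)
Your proposal is correct and follows the paper's approach exactly: read off $b_\ell^\pm = i_\ell^\pm + i_{\ell+1}^\pm$ from Proposition~\ref{prop:poincareker} via (\ref{eq:poincarerel}) and simplify, using $g(2g-1)=2g^2-g$ and $(g+2)(2g-1)=2g^2+3g-2$ to collapse the binomial coefficients. One minor typo: in your final parenthetical the first summand should be $-2^{g-2}$, not $-2^{2g-3}$, since it is the constant part of $-2^{g-2}(1+2^{g-1})$ carried over from $i_{2+\varepsilon}^-$; with that correction the displayed equality holds and your stated conclusion $-2^{g-2}(1+2^g)$ is right.
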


\vspace{.55cm}

\noindent Theorem \ref{thm:main} now follows from this corollary by adding together the plus and minus polynomials. We also remark that formula (\ref{eq:invdim}) for the dimension of the invariant part of the framed instanton homology follows in the same way from (\ref{eq:poincareinv}).\\

\newpage

%%%%%%%%%%%
\subsection{Comparison to the cohomology of the critical set}\label{sec:comparison}

\vspace{.30cm}

Having finished the proof of Theorem \ref{thm:main}, we briefly return to the comparison between the framed instanton homology and the singular cohomology of the framed moduli space $N^g_0$ that was described in Section \ref{sec:background}. We can see how the above arguments adapt, and in fact greatly simplify, when working within the ring $H^\ast(N^g)$ in place of $I(\Sigma\times S^1)_w'$. From the Gysin exact sequence (\ref{eq:gysin}), the cohomology $H^\ast(N_0^g)$ is the mapping cone of multiplication by $\beta$ on the ring $H^\ast(N^g)$. Thus we may first consider the kernel of $\beta$. For this we proceed just as we did at the beginning of Section \ref{sec:framed}, and define ideals $J_g'\subset \C[\alpha,\gamma]$ recursively by setting $J_g' = (\zeta_g',\zeta_{g+1}',\zeta_{g+2}')$, where $\zeta_g' = 0$ for $g<0$, $\zeta_0' =1$, and

\vspace{.18cm}
\[
    \zeta_{g+1}' \; = \; \alpha\zeta_g' \; + \; 2\gamma g (g-1) \zeta_{g-2}'
\]
\vspace{.10cm}

\noindent for $g\geqslant 0$. This is the result of plugging $\beta=0$ into the recursively defined presentation for $H^\ast(N^g)$ that was mentioned in Section \ref{sec:ring}. In particular, if we only care about $\Z/4$-gradings, then the sum of two copies of the vector space $\C[\alpha,\gamma]/J_g'$, one of them shifted in grading by $3$ (mod 4), is isomorphic to the invariant part of $H^\ast(N^g_0)$. More generally, the integer graded betti numbers are dictated by the gradings in the Gysin sequence (\ref{eq:gysin}).

The first thing to notice is that the short manipulation of Lemma \ref{lemma:minuseven} adapts to show that for all $g\geqslant 2$ the ideal $J_g'$ is generated by $\zeta_g'$ and $\gamma J_{g-2}'$. Inductively, we obtain that $\gamma^i\zeta'_{g-2i}\in J_g'$ for $i\geqslant 0$, and since $\gamma\in J_1'$ we also inductively have $\gamma^{i} \in J_g'$ for $i=\lceil g/2 \rceil$. It is easily verified that the discussion following Lemma \ref{lemma:minuseven} also adapts, with the kernel of multiplication by $\gamma$ identified as $J_{g-2}'/J_g'$. The following is an analogue of Proposition \ref{prop:minusgrobner}, and the proof uses the same basic theory:\\

\vspace{.25cm}

\begin{prop}
    Under the lexicographical monomial ordering with $\alpha >\gamma$, a Gr\"{o}bner basis for $J'_g$ is:

\vspace{.20cm}
\begin{equation*}
    \left\{\zeta'_g, \;\; \gamma\zeta'_{g-2}, \;\; \gamma^2\zeta'_{g-4}, \;\; \ldots, \;\; \gamma^{\lfloor g/2\rfloor}\zeta'_{g-2\lfloor g/2\rfloor}, \;\; \gamma^{\lceil g/2\rceil}\right\}.
\end{equation*}
\vspace{.10cm}

\noindent Consequently, a vector space basis for $\C[\alpha,\gamma]/J_g'$ is represented by the monomials listed in the grid of Lemma \ref{prop:minusgrobner} for $g$ even, and  the monomials listed in the grid of Lemma \ref{prop:plusgrobner} for $g$ odd.
\end{prop}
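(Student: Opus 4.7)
The plan is to follow the three-step template used in Propositions \ref{prop:minusgrobner} and \ref{prop:plusgrobner}: establish that the listed elements lie in $J'_g$; identify their initial terms; and compute $\deg J'_g$ so as to match the count of standard monomials outside the initial ideal, invoking the standard Gr\"{o}bner basis criterion of \cite[Ch.~2]{monomial}.

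I would first note that specializing $\beta = 0$ preserves the argument of Lemma \ref{lemma:minuseven}, in fact in a simpler form since there is no parity-dependent $(-1)^k 8$ term. From $\zeta'_{g+1} = \alpha\zeta'_g + 2g(g-1)\gamma\zeta'_{g-2}$ one reads $\gamma\zeta'_{g-2} \in J'_g$, and from the analogous expansion of $\zeta'_{g+2}$ one gets $\gamma\zeta'_{g-1} \in J'_g$. Hence $J'_g = (\zeta'_g,\, \gamma J'_{g-2})$ for every $g \geq 2$, with no dependence on parity. Iterating gives $\gamma^i \zeta'_{g-2i} \in J'_g$ for $0 \leq i \leq \lfloor g/2\rfloor$, and the base cases $\gamma \in J'_1$ and $\gamma \in J'_2$ together with iteration give $\gamma^{\lceil g/2\rceil} \in J'_g$. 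Next, a short induction on the recursion $\zeta'_{g+1} = \alpha\zeta'_g + 2g(g-1)\gamma\zeta'_{g-2}$ produces $\zeta'_g = \alpha^g + (\text{lower order in }\alpha)$, so under the lex order with $\alpha > \gamma$ the initial term of $\gamma^i \zeta'_{g-2i}$ is $\alpha^{g-2i}\gamma^i$, and the initial term of $\gamma^{\lceil g/2\rceil}$ is itself.

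The third step is the dimension count, which I expect to be the only point requiring any real care. Imitating the derivation of (\ref{eq:minusevendegree}), I would use the exact sequence
\[
0 \longrightarrow \ker(\gamma) \longrightarrow \C[\alpha,\gamma]/J'_g \xrightarrow{\;\gamma\;} \C[\alpha,\gamma]/J'_g \longrightarrow \C[\alpha]/I'_g \longrightarrow 0,
\]
where $I'_g$ is the image of $J'_g$ under $\gamma \mapsto 0$. The first step identifies $\ker(\gamma) = J'_{g-2}/J'_g$, and setting $\gamma = 0$ in the recursion reduces it to $\zeta'_{g+1} = \alpha\zeta'_g$, so $I'_g = (\alpha^g)$ has degree $g$. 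This yields $\deg J'_g - \deg J'_{g-2} = g$ for $g \geq 2$, and combined with $\deg J'_0 = 0$ and $\deg J'_1 = 1$ induction gives $\deg J'_g = \lceil g/2\rceil(\lfloor g/2\rfloor + 1)$. A direct count of admissible monomials $\alpha^a\gamma^b$ (those with $0 \leq b \leq \lceil g/2\rceil - 1$ and $0 \leq a \leq g - 2b - 1$) yields the same number in both parities, confirming that the initial ideal of the proposed basis has the correct degree. The Gr\"{o}bner basis statement and the monomial vector space basis of $\C[\alpha,\gamma]/J'_g$ then follow from the standard criterion, and the resulting grids match those of Proposition \ref{prop:minusgrobner} for $g$ even and Proposition \ref{prop:plusgrobner} for $g$ odd. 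I anticipate no genuine obstacle; the parity-dependent bookkeeping in the monomial count is the only place where one must be attentive.
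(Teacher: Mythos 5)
Your proposal is correct and mirrors the paper's own proof sketch: adapt the argument of Lemma~\ref{lemma:minuseven} (which simplifies since there is no $(-1)^k 8$ term) to get $J'_g = (\zeta'_g, \gamma J'_{g-2})$ for all $g\geqslant 2$, identify the initial terms, compute degrees via the multiplication-by-$\gamma$ exact sequence with $\deg J'_g - \deg J'_{g-2} = g$, and invoke the standard Gr\"{o}bner basis criterion. All the details you fill in (the base cases $\deg J'_0 = 0$, $\deg J'_1 = 1$, the count of standard monomials in both parities, and $\gamma^{\lceil g/2\rceil}\in J'_g$ via $\gamma\in J'_1, J'_2$) check out.
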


\vspace{.60cm}

\noindent We remark that when $g$ is even the last two elements of the Gr\"{o}bner basis above are equal. We emphasize that hardly any work is required in establishing this proposition, and all of the little complications that arose in the proof of Theorem \ref{thm:main} disappear in this setting. Also, Newstead's betti numbers (\ref{eq:newsteadbetti}) can be recovered using this proposition and a formula similar to (\ref{eq:poincarepolynomial}). Note that if $\C[\alpha,\gamma]/J'_g$ is viewed as a $\Z/4$-graded vector space, then in fact we have

\vspace{.35cm}
\begin{equation*}
    P_t\left(J'_g\right) \;\; = \;\;\begin{cases} \;\;\;\;\; P_t\left(J^-_g\right) & \;\;\;\; g\text{ even}, \\ & \\\;\;\;\;\; P_t\left(J^+_g\right) &\;\;\;\; g\text{ odd}.\end{cases}
\end{equation*}
\vspace{.40cm}

\noindent At this level we can see the rank inequalities $n_i \geqslant b_i$ from the introduction as arising from the more basic inequalities between the coefficients of $P_t(J_g^-)$ and $P_t(J_g^+)$ for different parities of $g$: when $g$ is even, the former's coefficients are larger, while the opposite is true for $g$ odd. Further, we can proceed just as in the previous section to compute the mod 4 graded betti numbers of $N_0^g\sqcup N_0^g$. If we let the numbers $i^\pm_\ell$ be as computed in Proposition \ref{prop:poincareker}, and as before let $\varepsilon=0$ if $g$ is even and $\varepsilon = 1$ if $g$ is odd, then we have the following formulae for $n_\ell = \dim H_\ell(N_0^g\sqcup N_0^g)$ with $\ell\in \Z/4$:

\vspace{.10cm}

\begin{align*}
	n_{0+\varepsilon} \, = \, n_{1+\varepsilon} \,  = \, 2\left(i_\varepsilon^+ \,+\, i^-_{1+\varepsilon}\right),\qquad \quad n_{2+\varepsilon} \, = \, n_{3+\varepsilon} \,  = \,  2\left(i_{\varepsilon}^+\, + \, i^-_{3+\varepsilon}\right).
\end{align*}

\vspace{.38cm}

\noindent  These can also be computed directly from Newstead's formula (\ref{eq:newsteadbetti}) for the betti numbers of $H^\ast(N^g_0)$. The numbers on the right hand sides are given by the formulae in Proposition \ref{prop:poincareker}. Also note that from these expressions it is easy to read off that the total rank of $H^\ast(N^g_0)$, which is half the sum of these four numbers, is equal to $g{2g \choose g}$.

\vspace{.60cm}

\appendix
\section{Appendix: the twisted Gysin sequence}

\vspace{.05cm}

In this appendix we sketch the derivation of the twisted Gysin sequence (\ref{eq:twistedgysin}). Although this is essentially part of \cite[Thm 1.3]{scaduto}, we include it here because the proof is very simple assuming Fukaya's connect sum theorem, and also because the formulation is slightly different from that of loc. cit. Suppose that $(Y,w)$ is a non-trivial admissible pair as in the introduction. Recall that $I^\#(Y)_w$ may be defined as the instanton homology group $I(Y\# T^3)_v'$ where $v$ is equal to $w$ over $Y$ and $t=w|_{T^3}$ is non-trivial over the 3-torus. This latter $\Z/4$-graded group is in turn a quotient of Floer's $\Z/8$-graded group $I(Y\# T^3)_v$ by a degree four involution.

The idea of the version of Fukaya's theorem under consideration is as follows. The chain complex $C(Y)_w$ for Floer's $\Z/8$-graded group $I(Y)_w$ is generated by a finite set of isolated irreducible projectively flat connections on a unitary bundle $E$ with $\det E = w$, modulo determinant one gauge transformations. In actuality, this is true after a suitable perturbation, which we fix. In the particular case in which $Y$ is a 3-torus, there is no need to perturb, and there are exactly two generators, say $\rho_1$ and $\rho_2$, as one can verify via the holonomy correspondence.

With a perturbation for $Y$ fixed as above, the space of (perturbed) projectively flat connections on the relevant bundle over $Y\# T^3$ is a disjoint union of copies of $SO(3)$, one for each pair $(\rho, \rho_i)$ where $i\in \{1,2\}$ and $\rho$ is a generator for $C(Y)_w$. The parameter space $SO(3)$ may be thought of as the different ways of gluing $\rho$ and $\rho_i$ together.

Fukaya's Bott-Morse type spectral sequence says that there is a spectral sequence whose starting page is the homology of this space which converges to $I(Y\# T^3)_v$. The group $C(T^3)_t$ is of rank two, generated by $\rho_1$ and $\rho_2$, the gradings of which differ by four. We see that the $E^1$-page is

\vspace{.15cm}
\[
E^1\; = \; \Big(C(Y)_w \otimes C(T^3)_t\Big)\otimes H_\ast(SO(3)).
\]
\vspace{.04cm}

\noindent Here it is important to mention that our coefficients are the complex numbers, so that $H_\ast(SO(3))$ is a rank two vector space. Fukaya identifies the higher differentials in terms of the $u$-maps. The result is that the differential on the $E^1$-page computing the $E^\infty$-page is given by

\vspace{.15cm}
\[
    \partial \otimes 1\otimes \varepsilon \; + \;  \left(u\otimes 1 \, + \, 1\otimes \tau\right)\otimes \mu
\]
\vspace{.04cm}

\noindent in which $\partial$ is the differential on Floer's complex $C(Y)_w$, $u$ is the $u$-map for the non-trivial admissible pair $(Y,w)$, $\tau$ is the $u$-map for the 3-torus with $\tau(\rho_1)=8\rho_2$ and $\tau(\rho_2)=8\rho_1$, $\mu$ is the map on $H_\ast(SO(3))$ sending the degree 3 generator $\omega$ to the degree 0 generator $[\text{pt}]$, and the map $\varepsilon$ sends $\omega$ to itself and sends $[\text{pt}]$ to $-[\text{pt}]$.

The $E^2$-page, on the other hand, is computed from the $E^1$-page by using only the $E^1$-differential, given by the term $\partial\otimes 1\otimes \varepsilon$. Thus the $E^2$-page is given as follows:

\vspace{.15cm}
\[
    E^2 \; = \; \Big(I(Y)_w \otimes I(T^3)_t\Big)\otimes H_\ast(SO(3)).
\]
\vspace{.04cm}

\noindent We remark that $I(T^3)_t$ is equal to $C(T^3)_t$, the differentials vanishing for grading reasons. Using only that $u$ is an isomorphism on $I(Y)_w$, it is readily verified that the map sending $[\rho]\otimes \rho_1\otimes \omega$ to the element $[\rho]\in I(Y)_w$ and all other types of generators in $E^2$ to zero is an isomorphism from the kernel of the remaining differential on $E^2$ to the kernel of $u^2-64$ on $I(Y)_w$. Indeed, the inverse correspondence sends $[\rho]$ to the element $[\rho]\otimes \rho_1\otimes \omega -\frac{1}{8} u[\rho]\otimes \rho_2\otimes \omega$. A similar statement holds for the cokernel. This identifies $I(Y\# T^3)_w$ as a vector space with the mapping cone of $u^2-64$ on the group $I(Y)_w$. Modding out by a degree four involution then identifies $I^\#(Y)_w$ with the mapping cone of $u^2-64$ on the group $I(Y)_w'$.

\vspace{.50cm}

\bibliography{references}

\vspace{1.40cm}
  {\footnotesize

    \textsc{Ben-Gurion University Mathematics Department,
    Beer-Sheva, Israel}\par\nopagebreak
  \textit{E-mail address:}\;\texttt{williamb@math.bgu.ac.il}

\vspace{.75cm}

  \textsc{Simons Center for Geometry and Physics, Stony Brook, NY}\par\nopagebreak
  \textit{E-mail address:}\;\texttt{cscaduto@scgp.stonybrook.edu}
}

\end{document}